\documentclass[reqno,10pt,twoside]{amsart}
\numberwithin{equation}{section}

\usepackage{fullpage}
\usepackage{xypic} 
\usepackage{graphicx}   
\usepackage{float}
\usepackage{graphics,amssymb}  
\usepackage{enumerate}
\usepackage{latexsym}
\usepackage{epsf}
\usepackage{mathrsfs}
\usepackage{bbm}
\usepackage{hyperref} 
\hypersetup{ 
colorlinks,
citecolor=black,
filecolor=blue,
linkcolor=black,
urlcolor=blue 
} 
\usepackage{comment}
\xyoption{curve}
\newtheorem{lemma}{Lemma}[section]
\newtheorem{lem/def}[lemma]{Lemma/Definition}
\newtheorem{proposition}[lemma]{Proposition}
\newtheorem{theorem}[lemma]{Theorem}
\newtheorem{corollary}[lemma]{Corollary}

\theoremstyle{definition}

\newtheorem{definition}[lemma]{Definition}
\newtheorem{remark}[lemma]{Remark}

\newtheorem{notation}[lemma]{Notation}

\newcommand{\beq}{\begin{equation*}}
\newcommand{\eeq}{\end{equation*}}
\newcommand{\beqlbl}{\begin{equation}}
\newcommand{\eeqlbl}{\end{equation}}
\newcommand{\ba}{\begin{array}}
\newcommand{\ea}{\end{array}}

\newcommand{\Hom}{\mathrm{Hom}}
\newcommand{\RHom}{\mathrm{RHom}}
\newcommand{\Ext}{\mathrm{Ext}}

\newcommand{\End}{\mathrm{End}}
\newcommand{\gr}{\mathrm{gr}}
\newcommand{\ox}{\otimes}

\newcommand{\xym}{\xymatrix}

\renewcommand{\k}{\mathbf{k}}

\setcounter{tocdepth}{5}

\bibliographystyle{plain}

\title[Cohomology Rings of Smash Products]{Spectral Sequences for the
  Cohomology Rings of a Smash Product}
\date{}

\author{Cris Negron}
\email{negron@uw.edu}
\address{Department of Mathematics, University of Washington, Seattle, WA 98195, USA}
\thanks{This material is based upon work supported by the National Science Foundation Graduate Research Fellowship under Grant No. DGE-1256082}

\begin{document}
\maketitle

\begin{abstract}
Stefan and Guichardet have provided Lyndon-Hochschild-Serre type
spectral sequences which converge to the Hochschild cohomology and Ext
groups of a smash product.  We show that these spectral sequences
carry natural multiplicative structures, and that these multiplicative
structures can be used to calculate the cup product on Hochschild
cohomology and the Yoneda product on an Ext algebra.
\end{abstract}

\section{Introduction}

Let $\k$ be a field of arbitrary characteristic, and take
$\ox=\ox_\k$.  Fix a ($\k$-)algebra $A$ and a Hopf algebra $\Gamma$
acting on $A$.  We assume that the antipode on $\Gamma$ is bijective,
and that the action of $\Gamma$ on $A$ gives it the structure of a
$\Gamma$-module algebra \cite[Definition 4.1.1]{M}.  We can then form
the smash product algebra $A\#\Gamma$, which is the vector space $A\ox
\Gamma$ with multiplication
\beq
(a\ox\gamma)(a'\ox\gamma'):=\sum_ia(\gamma_{i_1}\cdot a')\ox \gamma_{i_2}\gamma',
\eeq
where $\Delta(\gamma)=\sum_i\gamma_{i_1}\ox\gamma_{i_2}$.  In the case
that $\Gamma$ is the group algebra of a group $G$ we use the notation
$A\# G$ as a shorthand for the smash product $A\#\k G$.
\par
Smash products have appeared in a number of different contexts in the
literature.  Under certain conditions, the smash product can serve as
a replacement for the invariant algebra $A^\Gamma$ \cite[Section
4.5]{M} \cite{AFLS} \cite[Proposition 5.2]{CI}.  Smash products have
also appeared as a means of untwisting, or unbraiding, certain twisted
structures.  For example, one can untwists a twisted Calabi-Yau
algebra with a smash product \cite{RRZ, GK}, or unbraid a braided Hopf
algebra \cite[Section 1.5]{AS}.  In a more classical context, smash
products have played an integral role in a classification program
proposed by Andruskiewitsch and Schneider, which began at \cite{AS98}.
\par
In this paper we equip some known spectral sequences, which converge
to the Hochschild cohomology and Ext groups of a smash product, with
multiplicative structures.  These spectral sequences, with their new
multiplicative structures, can then be used to compute the products on
these cohomologies.  Specifically, we provide spectral sequences which
converge to the Hochschild cohomology $\mathrm{HH}(A\#\Gamma, B)$,
along with the cup product, and the extension algebra
$\Ext_{A\#\Gamma\text{-mod}}(M,M)$, along with the standard Yoneda
product.  Here we allow $M$ to be any (left) $A\#\Gamma$-module and
$B$ to be any algebra extension of the smash product, i.e. any algebra
equipped with an algebra map $A\#\Gamma\to B$.
\par
Recall that the Hochschild cohomology $\mathrm{HH}(R,M)=\oplus_i\
\mathrm{HH}^i(R,M)$ of an algebra $R$, with ``coefficients'' in a
$R$-bimodule $M$, is defined to be the graded Ext group
$\mathrm{HH}(R,M)=\Ext_{R\text{-bimod}}(R,M)$.  In the case that $M$
is an algebra extension of $R$, the Hochschild cohomology
$\mathrm{HH}(R,M)$ carries a natural product called the cup product.
The definition of the cup product is reviewed in Section \ref{rmind}.
\par
The Hochschild cohomology ring $\mathrm{HH}(R,R)$ is known to be an
invariant of the derived category $D^b(R)$ \cite{rickard91}.  In
addition to providing a relatively refined derived invariant, the cup
product can also help us to analyze the module category of a given
algebra.  Snashall and Solberg have put forward a theory of support
varieties for Artin algebras by way of the Hochschild cohomology ring
$\mathrm{HH}(R,R)$.  They assign to an Artin algebra $R$, and any pair
of $R$-modules, a subvariety of the maximal ideal spectrum of (a
subalgebra of) the Hochschild cohomology.  The cup product can also
help us get a handle on some of the additional structures on
Hochschild cohomology, such as the Gerstenhaber bracket.  The
applications of Hochschild cohomology rings are, however, limited by a
scarcity of examples and by the fact that the cup product can be
difficult to compute in general.
\par
In the theorem below, by a multiplicative spectral sequence we mean a
spectral sequence $E=(E_r)$ equipped with bigraded products $E_r\ox
E_r\to E_r$ which are compatible with the differentials and structural
isomorphism $E_{r+1}\cong \mathrm{H}(E_r)$.  (One can refer to Section
\ref{DIA} for a more precise definition.)  We say that a
multiplicative spectral sequence converges to a graded algebra
$\mathrm{H}$ if $\mathrm{H}$ carries an additional filtration and
there is an isomorphism of bigraded algebras $E_\infty\cong \gr
\mathrm{H}$.  One of our main result is the following.

\begin{theorem}[Corollary \ref{spectrill}]
For any algebra extension $B$ of the smash product $A\#\Gamma$, there
are two multiplicative spectral sequences
\beq
E_2=\Ext_{\Gamma\text{-}\mathrm{mod}}(\k,\mathrm{HH}(A,B))\Rightarrow \mathrm{HH}(A\#\Gamma,B)
\eeq
and
\beq
'E_1=\Ext_{\Gamma\text{-}\mathrm{mod}}(\k,\RHom_{A\text{-}\mathrm{bimod}}(A,B))\Rightarrow \mathrm{HH}(A\#\Gamma,B)
\eeq
which converge to the Hochschild cohomology as an algebra.
\label{thm0}
\end{theorem}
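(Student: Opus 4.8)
The plan is to realize both spectral sequences as the two spectral sequences of a single first‑quadrant cochain double complex, and then to equip that double complex with a bigraded product which induces the cup product on $\mathrm{HH}(A\#\Gamma,B)$. Recall that $\mathrm{HH}(A\#\Gamma,B)=\Ext_{(A\#\Gamma)^e}(A\#\Gamma,B)$. Conceptually, the separation of the $A$‑direction from the $\Gamma$‑direction reflects the fact that, since $A\#\Gamma=A\ox\Gamma$ is free over $A$ on either side and the antipode of $\Gamma$ is bijective, the enveloping algebra $(A\#\Gamma)^e$ is itself a smash product $A^e\#H$ for a Hopf algebra $H$ assembled from two copies of $\Gamma$; at the level of complexes this is implemented by the two resolutions below.

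I would fix a $\Gamma$‑equivariant projective resolution $\mathrm{Bar}(A)\to A$ over $A^e$ (the bar resolution, with its diagonal $\Gamma$‑action coming from the module‑algebra structure) and a projective resolution $Q_\bullet\to\k$ of $\k$ over $\Gamma$. Applying $\Hom_{A^e}(-,B)$ to $\mathrm{Bar}(A)$ produces the Hochschild cochain complex $C^\bullet(A,B)=\RHom_{A\text{-}\mathrm{bimod}}(A,B)$, a complex of $\Gamma$‑modules with $H^\bullet=\mathrm{HH}(A,B)$. Forming
\beq
K^{p,q}=\Hom_{\Gamma}\bigl(Q_p,\,C^q(A,B)\bigr)
\eeq
gives a first‑quadrant double complex whose total complex computes $\mathrm{HH}(A\#\Gamma,B)$, recovering the Stefan--Guichardet identification. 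The two filtrations of $\mathrm{Tot}(K)$ then yield the two spectral sequences of the statement: filtering so as to take Hochschild cohomology first gives the sequence with $E_2=\Ext_{\Gamma}(\k,\mathrm{HH}(A,B))$, since $\Hom_\Gamma(Q_p,-)$ is exact; filtering so as to take $\Gamma$‑cohomology first gives the sequence with ${}'E_1=\Ext_{\Gamma}(\k,\RHom_{A\text{-}\mathrm{bimod}}(A,B))$, since $H^p(\Hom_\Gamma(Q_\bullet,C^q))=\Ext^p_\Gamma(\k,C^q)$.

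The multiplicative structure comes from a bigraded product $K^{p,q}\ox K^{p',q'}\to K^{p+p',q+q'}$. In the $q$‑direction this is the cup product on $C^\bullet(A,B)$, built from a $\Gamma$‑equivariant Alexander--Whitney diagonal $\mathrm{Bar}(A)\to\mathrm{Bar}(A)\ox_A\mathrm{Bar}(A)$ together with the multiplication of $B$; in the $p$‑direction it is induced by a $\Gamma$‑equivariant diagonal approximation $Q_\bullet\to Q_\bullet\ox Q_\bullet$, which gives the cup product on $\Ext_\Gamma(\k,-)$. Because both ingredients are $\Gamma$‑equivariant and of the stated bidegrees, the composite product is compatible with both filtrations, $F^pF^{p'}\subseteq F^{p+p'}$, so by the criterion of Section \ref{DIA} both spectral sequences are multiplicative, with the evident products on the $E_2$ and ${}'E_1$ pages.

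The hard part will not be the formal construction of the product on $K$ but the identification of the induced product on the abutment with the cup product on $\mathrm{HH}(A\#\Gamma,B)$ reviewed in Section \ref{rmind}. This requires a comparison, compatible with multiplication, between the DG algebra $\mathrm{Tot}(K)$ and the standard complex $\RHom_{(A\#\Gamma)^e}(A\#\Gamma,B)$. Concretely, one must arrange the diagonal on $\mathrm{Bar}(A)$ and the diagonal on $Q_\bullet$ so that together they induce a diagonal approximation on a projective resolution of $A\#\Gamma$ over $(A\#\Gamma)^e$; the obstruction to doing this strictly is precisely the twisting $\gamma_{i_1}\cdot a'$ in the smash‑product multiplication, so the comparison will in general be multiplicative only up to controlled homotopy. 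Verifying that these homotopies do not disturb the associated graded product — equivalently, that the quasi‑isomorphism $\mathrm{Tot}(K)\to\RHom_{(A\#\Gamma)^e}(A\#\Gamma,B)$ respects products on each page of the filtration — is the crux of the argument and the step I expect to demand the most care.
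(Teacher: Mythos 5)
Your skeleton is the same as the paper's: the two spectral sequences are the row and column filtration spectral sequences of the first-quadrant double complex $\Hom_\Gamma(L,\Hom_{A^e}(K,B))$ (your $K^{p,q}$), the product is built from a $\Gamma$-equivariant diagonal on the $A$-resolution together with a diagonal on the $\Gamma$-resolution, and multiplicativity of both spectral sequences follows because the product is bigraded and hence respects both filtrations. Up to that point the proposal matches the paper.

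The genuine gap is the step you yourself flag as the crux and then do not carry out: showing that the resulting product on the abutment is the cup product on $\mathrm{HH}(A\#\Gamma,B)$. Without that identification the statement ``converge to the Hochschild cohomology \emph{as an algebra}'' is unproved; you would only have multiplicative spectral sequences converging to \emph{some} algebra structure on $\mathrm{HH}(A\#\Gamma,B)$. Moreover, your proposed route to closing the gap --- a comparison that is ``multiplicative only up to controlled homotopy,'' with the twist $\gamma_1\cdot a'$ as the obstruction --- is not how the paper proceeds, and you would find it hard to control those homotopies across the pages of a filtration. The paper instead makes the comparison \emph{strict}: it builds a projective $(A\#\Gamma)^e$-resolution $K\#L^{\uparrow}$ of $A\#\Gamma$ as a smash product of the two resolutions (Theorem \ref{lol}), where $L^{\uparrow}=L\ox_\Gamma\Gamma^e$ carries a Hopf-bimodule coaction; the twist you worry about is absorbed exactly by the isomorphism $\phi$ of Lemma \ref{diglem}, which uses that coaction to intertwine $(K\ox_AK)\#(L^\uparrow\ox_\Gamma L^\uparrow)$ with $(K\#L^\uparrow)\ox_{A\#\Gamma}(K\#L^\uparrow)$. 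The composite $\phi(\omega\ox\sigma^\uparrow)$ is then a genuine $(A\#\Gamma)^e$-linear diagonal on $K\#L^\uparrow$ (Proposition \ref{digggg}), defining the cup product on $\Hom_{(A\#\Gamma)^e}(K\#L^\uparrow,B)$, and Theorem \ref{bigthm2} verifies by a direct sign computation that the cochain-level isomorphism $\Xi$ of Theorem \ref{bigthm1} is an isomorphism of dg algebras --- no homotopies required. To complete your proof you would need to supply this construction (or an equivalent one), since it is precisely the content that distinguishes the result from the earlier non-multiplicative spectral sequences of Stefan and Guichardet.
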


To be clear, we mean that there is some $\Gamma$-module algebra
structure on $\mathrm{HH}(A,B)$ and that the second term $E_2$ is the
bigraded algebra
$\Ext_{\Gamma\text{-mod}}(\k,\mathrm{HH}(A,B))$. Similarly, for the
term $'E_1$, we mean there is some particular model for
$\RHom_{A\text{-bimod}}(A,B))$ which is a $\Gamma$-module (dg) algebra
and that $'E_1$ is the given Ext algebra.  We also provide a version
of the above theorem for the cohomology rings $\Ext_{A\#\Gamma\text{-mod}}(M,M)$.

\begin{theorem}[Corollary \ref{lastocor}]
For any $A\#\Gamma$-module $M$, there are two multiplicative spectral sequences
\beq
\bar{E}_2=\Ext_{\Gamma\text{-}\mathrm{mod}}(\k,\Ext_{A\text{-}\mathrm{mod}}(M,M))\Rightarrow \Ext_{A\#\Gamma\text{-}\mathrm{mod}}(M,M)
\eeq
and
\beq
'\bar{E}_1=\Ext_{\Gamma\text{-}\mathrm{mod}}(\k,\RHom_{A\text{-}\mathrm{mod}}(M,M))\Rightarrow \Ext_{A\#\Gamma\text{-}\mathrm{mod}}(M,M)
\eeq
which converge to $\Ext_{A\#\Gamma\text{-}\mathrm{mod}}(M,M)$ as an algebra.
\label{thm00}
\end{theorem}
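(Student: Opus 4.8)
The plan is to deduce this statement directly from Theorem \ref{thm0} by a change of coefficients, rather than rebuilding the entire spectral sequence machinery. The key observation is that an Ext algebra is a special case of Hochschild cohomology with coefficients: for any algebra $R$ and any left $R$-module $M$, the space $\End_\k(M)=\Hom_\k(M,M)$ is an $R$-bimodule via the structure map $R\to\End_\k(M)$, and there is an algebra isomorphism
\beq
\mathrm{HH}(R,\End_\k(M))\cong \Ext_{R\text{-}\mathrm{mod}}(M,M)
\eeq
carrying the cup product to the Yoneda product. When $R=A\#\Gamma$, the structure map $A\#\Gamma\to\End_\k(M)$ exhibits $\End_\k(M)$ as an algebra extension $B$ of the smash product, so Theorem \ref{thm0} applies with this choice of $B$.

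First I would establish the displayed comparison at the cochain level by matching the bar resolution of $R$ over $R^e$ against the bar resolution of $M$ over $R$: the identifications $\Hom_{R^e}(R^{\ox n+2},\Hom_\k(M,M))\cong\Hom_\k(R^{\ox n}\ox M,M)\cong\Hom_R(R^{\ox n+1}\ox M,M)$ are compatible with the differentials, and one checks that the cup product on the Hochschild side agrees with the Yoneda product on the Ext side. Second, I would evaluate the two pages of Theorem \ref{thm0} at $B=\End_\k(M)$. Restricting the structure map to $A$ makes $\End_\k(M)$ an $A$-bimodule via the $A$-action on $M$, and the same comparison one level down yields $\mathrm{HH}(A,\End_\k(M))\cong\Ext_{A\text{-}\mathrm{mod}}(M,M)$ together with $\RHom_{A\text{-}\mathrm{bimod}}(A,\End_\k(M))\simeq\RHom_{A\text{-}\mathrm{mod}}(M,M)$. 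Feeding these identifications into the $E_2$- and $'E_1$-pages of Theorem \ref{thm0} produces exactly the pages $\bar E_2$ and $'\bar E_1$ claimed here, and convergence as algebras is inherited from Theorem \ref{thm0}.

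The main obstacle is bookkeeping for the $\Gamma$-actions and for multiplicativity, not any new homological input. Because $M$ is an $A\#\Gamma$-module it is in particular a $\Gamma$-module, and the bijective antipode lets us equip $\End_\k(M)$ with the adjoint $\Gamma$-action $\gamma\cdot f=\sum_i\gamma_{i_1}\,f\,S(\gamma_{i_2})$, making it a $\Gamma$-module algebra; I must check that this is the $\Gamma$-module algebra structure on the coefficient bimodule that Theorem \ref{thm0} uses to form the $\Gamma$-module (dg) algebra $\mathrm{HH}(A,\End_\k(M))$, and that the comparison isomorphisms above are $\Gamma$-equivariant algebra maps. Concretely, this amounts to verifying that the particular model chosen for $\RHom_{A\text{-}\mathrm{bimod}}(A,\End_\k(M))$ as a $\Gamma$-module dg algebra is identified, $\Gamma$-equivariantly and multiplicatively, with the corresponding model for $\RHom_{A\text{-}\mathrm{mod}}(M,M)$. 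Once this compatibility is in place, the two multiplicative spectral sequences of Theorem \ref{thm0} specialize verbatim to the two asserted for $\Ext_{A\#\Gamma\text{-}\mathrm{mod}}(M,M)$.
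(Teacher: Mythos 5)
Your proposal is correct and is essentially the paper's own argument: Section \ref{coHOM!} realizes $\Ext_R(M,M)$ as $\mathrm{HH}(R,\End_\k(M))$ via the $\ox$-$\Hom$ adjunction and a bar-resolution comparison of cup and Yoneda products (Proposition \ref{ack!}, Corollary \ref{cor15}), checks $\Gamma$-equivariance of that adjunction (Lemma \ref{somelems}), and then specializes the cochain-level isomorphism of Theorem \ref{bigthm2} to $B=\Hom_\k(M,M)$ to obtain Theorem \ref{bigthm3} and the two filtration spectral sequences. The compatibility bookkeeping you flag is exactly the content of Lemma \ref{somelems}.
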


In the text it is shown that all four of the above spectral sequences
exist as explicit isomorphism at the level of cochains.  Let us
explain what is meant by this statement in the case of Hochschild cohomology.  
\par
Let $B$ be an algebra extension of $A\#\Gamma$, as in Theorem
\ref{thm0}.  For a free $A$-bimodule resolution $K\to A$, equipped
with a $\Gamma$-action satisfying certain natural conditions, and any
resolution $L\to \k$ of the trivial $\Gamma$-module, we produce a dg
algebra structure on the double complex
\beqlbl
\Hom_{\Gamma\text{-mod}}(L,\Hom_{A\text{-bimod}}(K,B)).
\label{dcmplx}
\eeqlbl
From this data we also produce a $A\#\Gamma$-bimodule resolution
$\mathcal{K}$ of $A\#\Gamma$, and dg algebra structure on the
associated complex $\Hom_{A\#\Gamma\text{-bimod}}(\mathcal{K},B)$.
The dg algebra structure is chosen so that the homology of
$\Hom_{A\#\Gamma\text{-bimod}}(\mathcal{K},B)$ is the Hochschild
cohomology $\mathrm{HH}(A\#\Gamma, B)$ with the cup product.
\par
In Theorem \ref{bigthm2}, which can be seen as a lifting of Theorem
\ref{thm0} to the level of cochains, we show that there is an explicit
isomorphism of dg algebras
\beq
\Hom_{A\#\Gamma\text{-bimod}}(\mathcal{K},B)\overset{\cong}\to
\Hom_{\Gamma\text{-mod}}(L,\Hom_{A\text{-bimod}}(K,B)).
\eeq
It follows then that the Hochschild cohomology ring of the smash
product can be computed as the homology of the double complex
$\Hom_{\Gamma\text{-mod}}(L,\Hom_{A\text{-bimod}}(K,B))$.  We get
Theorem \ref{thm0} as an easy corollary of this fact.
\par
The full power of Theorem \ref{bigthm2} is employed to compute some
examples in a follow up paper.  Let us discuss just one example here.
Suppose $\k$ is of characteristic $0$ and let $q\in\k$ be a nonzero
scalar which is not a root of unity.  Let $\k_q[x,y]$ denote the skew
polynomial ring in $2$-variables,
\beq
\k_q[x,y]=\frac{\k\langle x,y\rangle}{(yx-qxy)}.
\eeq
\par
This algebra is twisted Calabi-Yau.  If we let
$\mathbb{Z}=\langle\phi\rangle$ act on $\k_q[x,y]$ by the automorphism
$\phi:x\mapsto q^{-1}x$, $y\mapsto qy$, then, according to
\cite[Proposition 7.3]{RRZ} and \cite{GK}, the smash product
$\k_q[x,y]\#\mathbb{Z}$ will be  Calabi-Yau.  By way of Theorem
\ref{bigthm2}, we can provide the following computation. 

\begin{theorem}
Let $\lambda$, $\varepsilon$, $\xi_i$, $\zeta$, and $\eta_i$ be a
variables of respective degrees $0$, $1$, $1$, $2$, and $2$.  Then
there is an isomorphism of graded algebras
\beq
\mathrm{HH}(\k_q[x,y]\#\mathbb{Z})\cong\frac{\k[\varepsilon,\lambda,\xi_1,\xi_2,\zeta]}{(\lambda\zeta-\xi_1\xi_2,\zeta^2)}\times_{\k[\varepsilon]}\frac{\k[\varepsilon,\eta_i:i\in\mathbb{Z}-\{-1\}]}{(\eta_i^2,\eta_i\eta_j)}.
\eeq
Furthermore, there is a natural embedding of graded algebras $\mathrm{HH}(\k_q[x,y])\to \mathrm{HH}(\k_q[x,y]\#\mathbb{Z})$ identifying $\mathrm{HH}(\k_q[x,y])$ with the subalgebra generated by $\xi_1$, $\xi_2$, and $\eta_0$.
\label{lastthm}
\end{theorem}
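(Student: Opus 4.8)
The plan is to apply Theorem \ref{bigthm2} (equivalently the first spectral sequence of Theorem \ref{thm0}) to $A=\k_q[x,y]$, $\Gamma=\k\mathbb{Z}=\k[\phi^{\pm1}]$, and $B=A\#\Gamma$, and then to extract both the additive and the multiplicative structure from the resulting double complex. For the $A$-bimodule resolution I would use the length-$2$ Koszul resolution $K\to A$; since $\phi$ merely scales $x$ and $y$ it acts diagonally on the generators of $K$, so the $\Gamma$-equivariance required by the hypotheses is immediate. For the resolution $L\to\k$ of the trivial $\Gamma$-module I would take the two-term complex $0\to\k\Gamma\xrightarrow{\phi-1}\k\Gamma\to\k\to0$. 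With these choices Theorem \ref{bigthm2} identifies $\mathrm{HH}(A\#\Gamma)$, as an algebra, with the homology of the explicit dg algebra \eqref{dcmplx}, and I would carry out every computation inside that model.

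First I would compute the coefficient input $\mathrm{HH}(A,B)$ together with its $\Gamma$-action. As an $A$-bimodule $B=A\#\Gamma=\bigoplus_{n\in\mathbb{Z}}A\phi^n$ decomposes into the twisted bimodules ${}^{1}\!A^{\phi^{n}}$ (the bimodule $A$ with right action twisted by $\phi^{n}$), so $\mathrm{HH}(A,B)=\bigoplus_n\mathrm{HH}(A,{}^{1}\!A^{\phi^{n}})$ is a direct sum of twisted Hochschild cohomologies of the quantum plane, and conjugation by $\phi$ preserves each summand and acts on it through the internal grading of $A$. Using the Koszul complex these twisted cohomologies are short to write down, and the key qualitative features I would record are: (i) each $\mathrm{HH}^q(A,{}^{1}\!A^{\phi^{n}})$ splits by $\phi$-weight, so for generic $q$ (not a root of unity) only the weight-zero part can survive after passing to $\Gamma$; (ii) $A$ is twisted Calabi--Yau of dimension $2$ with Nakayama automorphism $\nu=\phi^{-1}$, so by Van den Bergh duality the twist $n=-1$ is distinguished and carries the fundamental class; and (iii) the summand $n=0$ reproduces the ordinary cohomology $\mathrm{HH}(A,A)$.

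Next I would run the spectral sequence. Since $L$ has length $1$, the functor $\Hom_{\Gamma\text{-mod}}(L,-)$ yields only two columns, $p=0$ (the invariants $\ker(\phi-1)$) and $p=1$ (the coinvariants $\coker(\phi-1)$), so there is no room for higher differentials and the spectral sequence degenerates at $E_2$. For generic $q$ both columns reduce to the weight-zero subspaces of the twisted cohomologies, and assembling these over all twists $n$ and all cohomological degrees produces the generators in the statement: the degree-$0$ polynomial tower $\k[\lambda]$ with $\lambda=xy\phi$, coming from $\bigoplus_{n\ge0}\mathrm{HH}^0(A,{}^{1}\!A^{\phi^{n}})$; the group class $\varepsilon$ of degree $1$ from the $p=1$ corner at $n=0$; the two weight-zero derivation classes $\xi_1,\xi_2\in\mathrm{HH}^1(A,A)$; the Nakayama class $\zeta$ of degree $2$ at $n=-1$; and the degree-$2$ family $\eta_i$ ($i\ne-1$) from the weight-zero part of $\mathrm{HH}^2(A,{}^{1}\!A^{\phi^{i}})$. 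Because $A\#\Gamma$ is Calabi--Yau of global dimension $3$, everything in cohomological degree $\ge4$ vanishes, which already forces $\zeta^2=0$ and $\eta_i\eta_j=0$.

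The main obstacle is the multiplicative structure, which I would pin down by computing cup products of the chosen representative cocycles directly in the dg algebra \eqref{dcmplx}, as licensed by Theorem \ref{bigthm2}; working with the honest cochain-level isomorphism rather than with the associated graded $E_\infty=\gr\mathrm{HH}$ is exactly what lets me read off genuine relations instead of leading-term relations. The delicate identity is $\lambda\zeta=\xi_1\xi_2$: multiplication by $\lambda$ shifts the twist from $n=-1$ to $n=0$ and carries the Nakayama class onto the product of the two derivations, so this relation is precisely the cochain-level shadow of Van den Bergh duality. Dually, I expect the products $\lambda\eta_i$, $\zeta\eta_i$ and $\xi_a\eta_i$ all to vanish for $i\ne-1$; establishing these orthogonalities, together with the fact that only the unit and $\varepsilon$ are common to both branches, is what realizes $\mathrm{HH}(A\#\Gamma)$ as the fibre product of the two displayed quotient rings over $\k[\varepsilon]$. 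Finally, the embedding $\mathrm{HH}(A)\hookrightarrow\mathrm{HH}(A\#\Gamma)$ is the edge map attached to the $n=0$ summand at $p=0$: it is an algebra map by functoriality of the cup product, it is injective because the relevant weight-zero classes survive to $E_\infty$, and its image is the subalgebra generated by $\xi_1$, $\xi_2$ and the $n=0$ top class $\eta_0$. I would expect the bookkeeping of these cochain-level products, and in particular verifying the required vanishings uniformly in $i$, to be the most laborious and error-prone part of the argument.
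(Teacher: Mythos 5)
Be aware that the paper itself contains no proof of Theorem \ref{lastthm}: the computation is explicitly deferred to a follow-up paper, and the theorem is stated here only to advertise Theorem \ref{bigthm2}. So there is no in-paper argument to measure you against; I can only say that your strategy is exactly the one the paper intends. Taking $K$ to be the ($\Gamma$-equivariant) Koszul resolution of $\k_q[x,y]$, $L$ to be $\k\Gamma\xrightarrow{\phi-1}\k\Gamma$, decomposing $B=\bigoplus_n A\phi^n$ into the twisted bimodules ${}^1\!A^{\phi^n}$, using that $q$ is not a root of unity to cut down to weight zero, noting the two-column degeneration at $E_2$, and then computing products in the cochain model of Theorem \ref{bigthm2} rather than in $E_\infty=\gr\,\mathrm{HH}$ --- all of this is sound and is precisely the advertised use of the machinery. (You should also record that the Koszul comultiplication $\omega:K\to K\ox_AK$ is $\Gamma$-equivariant, i.e.\ that condition (III) holds; this is immediate since $\phi$ acts diagonally on generators, but it is a hypothesis of Theorem \ref{bigthm2}.)

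That said, what you have written is a plan, not a proof: the twisted groups $\mathrm{HH}^q(A,{}^1\!A^{\phi^n})$, their $\phi$-weight spaces, the invariants and coinvariants, the explicit cocycles representing $\lambda,\varepsilon,\xi_1,\xi_2,\zeta,\eta_i$, and all of the cup products are named but not computed, and those computations are the entire content of the theorem. Let me also flag one concrete logical gap in the part you did argue. Showing that certain relations hold is not showing that they present the algebra, and your degree bound ($\mathrm{HH}^{\geq 4}=0$ since $K\# L^\uparrow$ has length $3$) in fact yields more relations than the two you list: it forces $\varepsilon\xi_a\zeta=0$ in degree $4$, while a weight-and-degree count shows that $\varepsilon\xi_a\zeta$ is a nonzero ($\lambda$-torsion) element of $\k[\lambda,\zeta]\ox\Lambda(\varepsilon,\xi_1,\xi_2)/(\lambda\zeta-\xi_1\xi_2,\zeta^2)$. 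So you must compare the Hilbert series of the proposed presentation with the dimensions you actually extract from the double complex in every cohomological degree and reconcile discrepancies of exactly this kind; the degree-$\geq 4$ vanishing argument alone cannot close the presentation, and as it stands it is in tension with the displayed answer. This dimension bookkeeping, together with the cochain-level product computations you yourself identify as the laborious step, is where the actual proof lives.
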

In the statement of the above theorem $\k[X_1,\dots, X_n]$ denotes the
free graded commutative algebra on graded generators $X_i$, $\k[X_1,\dots, X_n]=\k\langle X_1,\dots,X_n\rangle/(X_iX_j-(-1)^{|X_i||X_j|}X_jX_i)$.

\subsection{Relation to the work of Stefan, Guichardet, and others}
As suggested in the abstract, Theorems \ref{thm0} and \ref{thm00} can
be seen as a refinement of results of Stefan and Guichardet given in
\cite{stefan} and \cite{guichardet01} respectively.  However, both
Stefan and Guichardet work with classes of
algebras that are slightly different than general smash products.  Guichardet provides spectral sequences 
\beq
\Ext_{\k G\text{-}\mathrm{mod}}(\k,\mathrm{HH}(A,M))\Rightarrow
\mathrm{HH}(A\#_{\alpha}G, M)
\eeq
for crossed product algebras $A\#_{\alpha}G$, where $G$ is a group, while
Stefan provides spectral sequences
\beq
\Ext_{\Gamma\text{-}\mathrm{mod}}(\k,\mathrm{HH}(A,M))\Rightarrow \mathrm{HH}(E, M)
\eeq
 for Hopf Galois extensions $A\to E$.  Guccione and Guccione extend
 the results of Guichardet to allow for crossed products with
 arbitrary Hopf algebras in \cite{GuGu}.  None of these spectral sequences carry any
 multiplicative structures.  For definitions of these different
 classes of algebras one can see \cite{M}.  Let us only mention that
 there are strict containments
\beq
\{\text{Smash Products}\}\subsetneq \{\text{Crossed Products}\}\subsetneq \{\text{Hopf Galois Extensions}\}.
\eeq
So these more limited results (taken together) do apply to larger classes of algebras.  
\par
Let us mention here that Guccione and Guccione also provide
spectral sequences for Ext groups in \cite{GuGu}, by way of a standard relation
\cite[Lemma 9.1.9]{weibel}.  Also, some results involving multiplicative
structures are given by Sanada, in a rather constrained setting, in
\cite{sanada93}.  Further analysis of the situation can be found
in \cite{barnes03}.
\par
The main point of comparison here is that our spectral sequences can
be used to compute the cup product, while those of Stefan, Guichardet, and
Guccione-Guccione can not (at least after restricting to the case of
smash products).  However, there are also differences in the methods
used in the three sources.  As a consequence, the usefulness of the
results vary in practice.  For example, Stefan produces his spectral
sequence as a Grothendieck spectral sequence, whereas those of
Guccione and Guccione are derived from filtrations on a certain (rather
large) complex.  Guichardet shows that the Hochschild cohomology of
a crossed product can be computed by the
double complex $C(G,C(A,M))$, where $C$
denotes the standard Hochschild cochain complex.  Indeed, Guichardet
provides quasi-isomorphisms of chain complexes $C(G,C(A,M))\leftrightarrows C(A\#_\alpha
G,M)$.  It does not appear that either of the given maps are dg
algebra maps, and so the cup product remains obscured.
\par
The spectral sequences produced in this
paper are those associated to the first quadrant double complex
(\ref{dcmplx}), which
may in some cases be chosen to be relatively small.  Our methods are
most closely related those of Guichardet.  In fact, by standard
techniques, one may move from Guichardet's double Hochschild cochain complex to our
double complex(es).  To summarize the
situation, we have the following chart
\\\\
{\Small
\begin{tabular}{|l|c|c|c|}
\hline
& \begin{tabular}{c}Class of algebras for which\\the spectral
  sequences apply\end{tabular}& Type of spectral sequences
& \begin{tabular}{c} Accounts for the\\ cup/Yoneda product\end{tabular}\\\hline
Stefan& Hopf Galois extensions & Grothendieck & No\\\hline
Guichardet & \begin{tabular}{c} Crossed products\\with
  groups\end{tabular} & Double complex & No\\\hline
Guccione-Guccione& Crossed products & Filtration & No\\\hline
Present paper& Smash products & Double complex & Yes\\
\hline
\end{tabular}}
\par
Of the works discussed, Theorems \ref{bigthm1}, \ref{bigthm2}, and
\ref{bigthm3} below provide the most computationally accessible
approach to the cohomology of a smash product, irrespective of the cup
product.  Grothendieck spectral sequences, for example, require the
use of injective resolutions, which are very difficult to come by in
general.  The methods used here are also more natural than those given
in \cite{GuGu} in the sense that many of the constructions we employ are functorial.

\subsection{Contents}

Throughout we consider a Hopf algebra $\Gamma$ acting on an algebra
$A$.  In Section \ref{bmd} we produce a resolution of the Hopf algebra
$\Gamma$ which carries enough structure to admit a smash product construction.  In
particular, we construct a complex of projective $\Gamma$-bimodules with an
additional (compatible) coaction, and quasi-isomorphism to $\Gamma$
which preserves the given structure.  We call such a resolution a Hopf
bimodule resolution.
\par
In Section \ref{bimodrez} we propose a
smash product construction for complexes of Hopf bimodules and
complexes of, so called, equivariant bimodules over $A$ (Definition
\ref{equiv}).  This smash product construction for complexes is used
to produce, from the Hopf
bimodule resolution of Section \ref{bmd} and an equivariant resolution
of $A$, a bimodule resolution of $A\#\Gamma$.  In Section \ref{HCvDI}
we use the aforementioned bimodule resolution of
$A\#\Gamma$ to construct an explicit isomorphism 
\beq
\Xi:\RHom_{A\#\Gamma\text{-bimod}}(A\#\Gamma,M)\overset{\cong}\to \RHom_{\Gamma\text{-mod}}(\k,\RHom_{A\text{-bimod}}(A,M))
\eeq
for any complex $M$ of $A\#\Gamma$-bimodules.
\par
In Section \ref{rmind} we review the products on both the domain and
codomain of the above isomorphism $\Xi$ (when evaluated at an algebra
extension of $A\#\Gamma$), and in Section \ref{DIA} we show that the map $\Xi$
is an isomorphism of dg algebras when appropriate.  Theorem \ref{thm0}
is also proved in this section.  Finally, in Section \ref{coHOM!} we give versions of our main theorems
for the Ext algebras $\Ext_{A\#\Gamma\text{-mod}}(M,M)$, for arbitrary
$M$.

\subsection*{Acknowledgments}

I would like to thank Jiafeng Lu, Xuefeng Mao, Xingting Wang, and
James Zhang for offering many valuable suggestions and corrections
during the writing of this paper.

\section*{Conventions}
For any coalgebra $\Gamma$, the coproduct of an element $\gamma\in
\Gamma$ will always be expressed using Sweedler's notation 
\beq
\gamma_1\ox \gamma_2=\Delta(\gamma).
\eeq
(So ``$\gamma_1\ox \gamma_2$'' is a symbol representing a sum of
elements $\sum_i\gamma_{i_1}\ox \gamma_{i_2}$ in $\Gamma\ox \Gamma$.)
Given $\Gamma$-modules $M$ and $N$ the tensor product $M\ox N$ is
taken to be a $\Gamma$-module under the standard action
\beq
\gamma(m\ox n):=(\gamma_1m)\ox (\gamma_2n).
\eeq
\par
As mentioned previously, {\it a Hopf algebra will always mean a Hopf
  algebra with bijective antipode}.  Let $\Gamma$ be a Hopf algebra
and $A$ be a $\Gamma$-module algebra.  Following \cite{SW}, we denote
the action of $\Gamma$ on $A$ by a superscript ${^\gamma
  a}:=\gamma\cdot a$.  Elements in the smash product $A\#\Gamma$ will
be denoted by juxtaposition $a\gamma:=a\ox \gamma\in A\#\Gamma=A\ox
\Gamma$.  Hence, the multiplication on $A\#\Gamma$ can be written
$(a\gamma)(b\gamma')=a({^{\gamma_1} b})\gamma_2\gamma'$.  For an
algebra $A$ we let $A^e$ denote the enveloping algebra $A^e=A^{op}\ox
A$.  {\it All modules are left modules unless stated otherwise}.  We
do not distinguish between the category of $A$-bimodules and the
category of (right or left) $A^e$-modules.
\par
In computations, all elements in graded vectors spaces are chosen to
be homogenous.  For homogeneous $x$, in a graded space $X$, we let
$|x|$ denote its degree.  For any algebra $A$ and $A$-complexes $X$
and $Y$ we let $\Hom_A(X,Y)$ denote the standard hom complex.  Recall
that the $n$th homogenous piece of the hom complex consists of all
degree $n$ maps $f:X\to Y$, and for any $f\in \Hom_A(X,Y)$ the
differential is given by $f\mapsto d_Yf-(-1)^{|f|}f d_X$.

\section{A Hopf bimodule resolution of $\Gamma$}
\label{bmd}

Let $\Gamma$ be a Hopf algebra.  We have the canonical algebra embedding
\beq
\ba{c}
\Delta^{tw}:\Gamma\to\Gamma^{e}=\Gamma^{op}\ox \Gamma\\
\gamma\mapsto S(\gamma_1)\ox\gamma_2.
\ea
\eeq
This map will be referred to as the {\it twisted diagonal map}.  The
twisted diagonal map gives $\Gamma^{e}$ a left $\Gamma$-module
structure.  On elements, this left action is given by
$\delta\cdot(\gamma\ox\gamma')=\gamma S(\delta_1)\ox\delta_2\gamma'$,
for $\delta\in \Gamma$, $\gamma\ox\gamma'\in\Gamma^e$.
\par
Note that, since the antipode of $\Gamma$ is bijective, there is an
isomorphism of left $\Gamma$-modules $S\ox id:\Gamma^e\to \Gamma\ox
\Gamma$.  The module $\Gamma\ox\Gamma$ is known to be free over
$\Gamma$.  (One can use the fundamental theorem of Hopf modules
\cite[Theorem 1.9.4]{M} to show this, for example.)  So we get the following

\begin{lemma}
The enveloping algebra $\Gamma^{e}$ is a free, and hence flat, left $\Gamma$-module.
\label{freelem}
\end{lemma}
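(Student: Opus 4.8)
The statement has two clauses, free and flat, but since every free module is flat the plan is to establish freeness and read off flatness for nothing. So I would prove that $\Gamma^{e}$ is free as a left $\Gamma$-module, the action being the one induced by the twisted diagonal $\Delta^{tw}$. The starting point is the isomorphism of left $\Gamma$-modules $S\ox id:\Gamma^{e}\to\Gamma\ox\Gamma$ already recorded above, which lets me replace $\Gamma^{e}$ by $\Gamma\ox\Gamma$. Transporting the action through $S\ox id$, and using that $S$ is an algebra isomorphism $\Gamma^{op}\to\Gamma$, the induced action on $\Gamma\ox\Gamma$ works out to $\delta\cdot(a\ox b)=S^{2}(\delta_1)a\ox\delta_2 b$, i.e.\ the codiagonal action $\delta\cdot(a\ox b)=\delta_1 a\ox\delta_2 b$ twisted by the algebra automorphism $S^{2}$ on the first leg. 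It therefore suffices to prove that $\Gamma\ox\Gamma$ with this (twisted) codiagonal action is a free left $\Gamma$-module.

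The key input is the fundamental theorem of Hopf modules \cite[Theorem 1.9.4]{M}. I would first dispose of the twist: the map $a\ox b\mapsto S^{2}(a)\ox b$ is a left-module isomorphism from the genuine codiagonal module onto the twisted one, using only that $S^{2}$ is an algebra map with $\Delta\circ S^{2}=(S^{2}\ox S^{2})\circ\Delta$ (equivalently, one checks directly that $S^{-1}\ox id:\Gamma^{e}\to\Gamma\ox\Gamma$ already lands in the untwisted codiagonal module, since $S^{-1}$ is again an anti-homomorphism). It then remains to show the codiagonal module $\Gamma\ox\Gamma$ is free. For this I would equip $\Gamma\ox\Gamma$ with the right $\Gamma$-coaction $id\ox\Delta$ on the second factor and check that, together with the codiagonal action, this makes $\Gamma\ox\Gamma$ a Hopf module; the requisite compatibility $\rho(\delta\cdot m)=\delta_1 m_{(0)}\ox\delta_2 m_{(1)}$ reduces to the identity $\delta_1 x\ox\delta_2 y_1\ox\delta_3 y_2$ on both sides. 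The fundamental theorem then yields an isomorphism $\Gamma\ox\Gamma\cong\Gamma\ox W$, with $W$ the space of coinvariants, exhibiting freeness.

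If one prefers to avoid invoking the structure theorem, the same conclusion follows from an explicit trivialization: the map $\theta:x\ox y\mapsto x_1\ox S(x_2)y$ intertwines the codiagonal action with the free action $\delta\cdot(x\ox y)=\delta x\ox y$ on the first factor, with inverse $x\ox y\mapsto x_1\ox x_2 y$. Verifying equivariance is a short Sweedler computation resting on the antipode identity $\delta_1\ox S(\delta_2)\delta_3=\delta\ox 1$. In either approach the only genuine mathematical content is the fundamental theorem of Hopf modules (or the antipode axioms underlying $\theta$); I expect the only real nuisance to be purely notational, namely keeping the twist by $S^{2}$ consistent through the Sweedler calculus. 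With freeness in hand, flatness is immediate.
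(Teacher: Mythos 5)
Your proposal is correct and follows essentially the same route as the paper: reduce via $S\ox id$ to $\Gamma\ox\Gamma$ with the diagonal action and invoke the fundamental theorem of Hopf modules \cite[Theorem 1.9.4]{M}. You are in fact more careful than the paper in tracking the $S^{2}$ twist that $S\ox id$ introduces (noting that $S^{-1}\ox id$ avoids it), and your explicit trivialization $x\ox y\mapsto x_1\ox S(x_2)y$ is a valid elementary substitute for the Hopf-module structure theorem.
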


Note that the left action of $\Gamma$ on $\Gamma^e$ is compatible with
the standard (outer) bimodule structure on
$\Gamma^e={_\Gamma\Gamma}\ox\Gamma_{\Gamma}$.  Indeed, the module
structure induced by the twisted diagonal map utilizes the inner
bimodule structure $\Gamma_{\Gamma}\ox{_\Gamma\Gamma}$ exclusively.
So we see that $\Gamma^e$ is a ($\Gamma$-$\Gamma^{e}$)-bimodule, and
that the induced module $M\ox_\Gamma\Gamma^e$ of a right
$\Gamma$-module $M$ is a $\Gamma$-bimodule.  To be clear, the left and
right actions of $\Gamma$ on $M\ox_\Gamma\Gamma^e$ are given by
\beq
\delta\cdot(m\ox_\Gamma(\gamma\ox\gamma')):=m\ox_\Gamma(\delta\gamma\ox\gamma')
\eeq
and
\beq
(m\ox_\Gamma(\gamma\ox\gamma'))\cdot\delta:=m\ox_\Gamma(\gamma\ox\gamma'\delta)
\eeq
respectively, where $\delta\in\Gamma$ and
$m\ox_\Gamma(\gamma\ox\gamma')\in M\ox_\Gamma\Gamma^e$.  The same
analysis holds when we replace $M$ with a complex of right $\Gamma$-modules.

\begin{notation}
Given any right $\Gamma$-module (resp. complex) $M$, we let we let
$M^\uparrow$ denote the induced module (resp. complex) $M\ox_\Gamma\Gamma^{e}$.
\end{notation}

The following result is proven, in less detail, in \cite[Section
3]{SiW}.  However, as we will be needing all the details, a full proof
is given here.

\begin{lemma}
Let $\xi:L\to \k$ be a resolution of the trivial right $\Gamma$-module
$\k=\Gamma/\ker\epsilon$.
\begin{enumerate}
\item The induced complex $L^\uparrow$ is a complex of projective $\Gamma$-bimodules.
\item The map $\xi^{\uparrow}:L^\uparrow\to \Gamma$,
  $\ell\ox_\Gamma(\gamma\ox\gamma')\mapsto \xi(\ell)\gamma\gamma'$, is
  a quasi-isomorphism of complexes of $\Gamma$-bimodules.
\end{enumerate}
\label{setup}
\end{lemma}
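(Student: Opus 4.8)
The plan is to handle the two parts separately, reducing everything to two properties of the induction functor $(-)^\uparrow=-\ox_\Gamma\Gamma^e$: that it carries free right $\Gamma$-modules to free $\Gamma$-bimodules, and that it is exact. For part (1), I would first record the $\Gamma$-bimodule identification $\Gamma^\uparrow=\Gamma\ox_\Gamma\Gamma^e\cong\Gamma^e$, where $\Gamma^e$ carries its regular (outer) $\Gamma^e$-module structure. Concretely, $m\ox_\Gamma w\mapsto m\cdot w$ (using the twisted-diagonal left action on $w\in\Gamma^e$) is the standard isomorphism $\Gamma\ox_\Gamma N\cong N$, and a direct check against the explicit outer-action formulas shows it is a map of $\Gamma$-bimodules. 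Since $\Gamma^e$ is free of rank one over itself and $-\ox_\Gamma\Gamma^e$ commutes with arbitrary direct sums, every free right $\Gamma$-module induces to a free $\Gamma$-bimodule, and every projective (a direct summand of a free) induces to a projective. Applying this termwise to the resolution $L$, whose terms are projective, gives part (1).

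For part (2) I would first verify that $\xi^\uparrow$ is a well-defined map of complexes of $\Gamma$-bimodules. Well-definedness over $\ox_\Gamma$ rests on the antipode axiom: moving $\delta\in\Gamma$ across the tensor replaces $\gamma\ox\gamma'$ by $\gamma S(\delta_1)\ox\delta_2\gamma'$, and $\gamma S(\delta_1)\delta_2\gamma'=\epsilon(\delta)\gamma\gamma'$ agrees with the trivial action $\xi(\ell\delta)=\epsilon(\delta)\xi(\ell)$; bimodule-linearity is immediate since $\xi(\ell)\in\k$ is central. The heart of the argument is then to exhibit $\xi^\uparrow$ as the image of the quasi-isomorphism $\xi$ under $(-)^\uparrow$. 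Because $\Gamma^e$ is flat as a left $\Gamma$-module by Lemma \ref{freelem}, the functor $-\ox_\Gamma\Gamma^e$ is exact and hence preserves quasi-isomorphisms, so $\xi\ox_\Gamma\Gamma^e:L^\uparrow\to\k\ox_\Gamma\Gamma^e$ is a quasi-isomorphism.

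It then remains to identify $\k\ox_\Gamma\Gamma^e$ with $\Gamma$ compatibly with $\xi^\uparrow$, and this is the step I expect to require the most care. I would define $\bar p:\k\ox_\Gamma\Gamma^e\to\Gamma$ by $1\ox_\Gamma(\gamma\ox\gamma')\mapsto\gamma\gamma'$ (well-defined by the same antipode computation as above) and a candidate inverse $s:\gamma\mapsto 1\ox_\Gamma(1\ox\gamma)$. Then $\bar p s=\mathrm{id}$ is immediate, and $s\bar p=\mathrm{id}$ reduces to the coinvariance relation $[\gamma\ox\gamma']=[1\ox\gamma\gamma']$ in $\k\ox_\Gamma\Gamma^e$. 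I would prove this by applying the balancing relation $[\sigma S(\delta_1)\ox\delta_2\tau]=\epsilon(\delta)[\sigma\ox\tau]$ with $\sigma=\gamma_1$, $\delta=\gamma_2$, $\tau=\gamma'$, and invoking the Hopf identity $\gamma_1S(\gamma_2)\ox\gamma_3=1\ox\gamma$ together with the counit axiom: the left side collapses to $[1\ox\gamma\gamma']$ while the right side becomes $[\gamma\ox\gamma']$. A final bookkeeping check confirms that $\bar p$ is a $\Gamma$-bimodule isomorphism carrying $\xi\ox_\Gamma\Gamma^e$ to $\xi^\uparrow$, which finishes the proof.

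The main obstacle is precisely this coinvariants identification $\k\ox_\Gamma\Gamma^e\cong\Gamma$, as it is the one place where the bijectivity of the antipode and the Hopf compatibilities are genuinely used; the flatness statement and the projectivity of $L^\uparrow$ are then formal consequences of Lemma \ref{freelem} and the behavior of induction on free modules.
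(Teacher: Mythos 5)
Your proposal is correct and follows essentially the same route as the paper: part (2) is proved there exactly as you describe, by factoring $\xi^\uparrow$ as the flat base change $\xi\ox_\Gamma\Gamma^e$ (using Lemma \ref{freelem}) followed by the isomorphism $\k\ox_\Gamma\Gamma^e\cong\Gamma$, whose injectivity rests on the same antipode computation $[\gamma\ox\gamma']=[1\ox\gamma\gamma']$ you give. The only cosmetic difference is in part (1), where the paper deduces projectivity from the adjunction $\Hom_{\Gamma^e}(L^i\ox_\Gamma\Gamma^e,-)\cong\Hom_\Gamma(L^i,-)$ rather than by reducing to free modules and passing to summands; both are standard and equivalent.
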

Statements (1) and (2) together say that $L^\uparrow$ is a projective
bimodule resolution of  $\Gamma$.
\begin{proof}
In each degree $i$ we have the adjunction
\beq
\Hom_{\Gamma^{e}}( L^i\ox_\Gamma\Gamma^{e},-)=\Hom_\Gamma(L^i,\Hom_{\Gamma^{e}}(\Gamma^{e},-)).
\eeq
Whence the functor on the left is seen to be exact.  So $L^{\uparrow}$ is a complex of projective bimodules.
\par
As an intermediate step in proving (2), let us consider the $\Gamma$-bimodule map
\beq
\ba{c}
\varphi:\k\ox_\Gamma\Gamma^{e}\to \Gamma\\
1\ox_\Gamma(\gamma\ox\gamma')\mapsto \gamma\gamma'.
\ea
\eeq
 For any $\gamma\in\Gamma$ we have $\varphi(1\ox_\Gamma(1\ox\gamma))=\gamma$.  So $\varphi$ is surjective.  Also, the computation
\beqlbl
\ba{rl}
1\ox_\Gamma(\gamma\ox\gamma') & = 1\ox(\gamma_1\epsilon(\gamma_2)\ox \gamma')\\
& =\epsilon(\gamma_2)\ox_\Gamma(\gamma_1\ox\gamma')\\
&=1\cdot\gamma_2\ox_\Gamma(\gamma_1\ox\gamma')\\
&=1\ox_\Gamma(\gamma_1S(\gamma_2)\ox\gamma_3\gamma')\\
&=1\ox_\Gamma(\epsilon(\gamma_1)\ox\gamma_2\gamma')\\
&=1\ox_\Gamma(1\ox\gamma\gamma')
\ea
\label{cmptn}
\eeqlbl
makes it clear that any element in $\k\ox_\Gamma\Gamma^e$ is of the
form $1\ox_\Gamma(1\ox\gamma)$ for some $\gamma$.  Whence $\varphi$ is
seen to be injective as well, and therefore an isomorphism of
bimodules.  Now, for (2), simply note that
$\xi\ox_\Gamma\Gamma^{e}:L^\uparrow\to\k\ox_\Gamma(\Gamma^{e})$ is a
quasi-isomorphism, since $\xi$ is a quasi-isomorphism and $\Gamma^{e}$
is flat over $\Gamma$, and that $\xi^\uparrow$ can be given as the
composition of the isomorphism $\varphi$ with the quasi-isomorphism $\xi\ox_\Gamma\Gamma^{e}$.
\end{proof}
 
\begin{definition}
Given any right $\Gamma$-module $M$, we define the left
$\Gamma$-comodule structure $\rho_M$ on $M^{\uparrow}$ by
\beqlbl
\ba{c}
\rho_M:M^\uparrow\to \Gamma\ox M^{\uparrow}\\
m\ox_\Gamma(\gamma\ox\gamma')\mapsto (\gamma_1\gamma_1')\ox(m\ox_\Gamma(\gamma_2\ox\gamma'_2)).
\ea
\label{blah}
\eeqlbl
Following the standard notation, for any $\mathrm{m}\in M^\uparrow$,
we denote the element $\rho_M(\mathrm{m})$ by $\mathrm{m}_{-1}\ox \mathrm{m}_0$.
\label{codef}
\end{definition}
There is something of a question of whether or not this coaction is
well defined.  Certainly we can define a coaction on the $\k$-tensor product
\beq
\tilde{\rho}_M:M\ox \Gamma^{e}\to \Gamma\ox(M\ox_\Gamma\Gamma^{e})
\eeq
by the same formula as (\ref{blah}).  A direct computation shows that
$\tilde{\rho}_M\big(m\delta\ox(\gamma\ox\gamma')-m\ox\delta(\gamma\ox\gamma')\big)=0$,
i.e. that $\tilde{\rho}_M$ vanishes on the relations for the tensor
product $M\ox_\Gamma \Gamma^e=M^{\uparrow}$.  Whence the coaction
$\rho_M$ can be given as the map induced on the quotient $M^\uparrow=M\ox_\Gamma \Gamma^e$.  

\begin{definition}
By a {\it Hopf bimodule} we will mean a $\Gamma$-bimodule $N$ equipped
with a left $\Gamma$-coaction $N\to \Gamma\ox M$ which is a map of
$\Gamma$-bimodules (where $\Gamma$ acts diagonally on the tensor
product $\Gamma\ox N$).  Maps of Hopf bimodules are maps that are
simultaneously $\Gamma$-bimodule maps and $\Gamma$-comodule maps.
\end{definition}

The algebra $\Gamma$ itself becomes a Hopf bimodule under the regular
bimodule structure and coaction given by the comultiplication.  In the
notation of \cite[Section 1.9]{M}, a Hopf bimodule is an object in the
category $_\Gamma^{\Gamma}\mathcal{M}_\Gamma$.

\begin{proposition}
Let $M$ and $N$ be right $\Gamma$-modules and $f:M\to N$ be a morphism
of $\Gamma$-modules .  For any $\mathrm{m}\in M^{\uparrow}$ and
$\gamma\in \Gamma$ the following equations hold:
\begin{enumerate}
\item $\rho_N(f^\uparrow(\mathrm{m}))=\mathrm{m}_{-1}\ox f^\uparrow(\mathrm{m}_0)$
\item $\rho_M(\mathrm{m}\cdot\gamma)=\mathrm{m}_{-1}\gamma_1\ox\mathrm{m}_0\gamma_2$
\item $\rho_M(\gamma\cdot \mathrm{m})=\gamma_1\mathrm{m}_{-1}\ox\gamma_2\mathrm{m}_0$.
\end{enumerate}
Said another way, $(-)^\uparrow$ is a functor from
$\mathrm{mod}$-$\Gamma$ to the category of Hopf bimodules.
\end{proposition}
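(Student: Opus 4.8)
The plan is to verify the three displayed identities by a direct computation on a representative element $\mathrm{m}=m\ox_\Gamma(a\ox b)$ of $M^\uparrow=M\ox_\Gamma\Gamma^e$. This is legitimate because the coaction $\rho_M$, the induced map $f^\uparrow=f\ox_\Gamma\Gamma^e$, and the outer $\Gamma$-bimodule actions on $M^\uparrow$ have all already been shown to be well defined on the quotient; hence it suffices to check that the two sides of each equation agree on representatives. Throughout I will use that $\rho_M(\mathrm m)=(a_1b_1)\ox(m\ox_\Gamma(a_2\ox b_2))$, so that $\mathrm m_{-1}=a_1b_1$ and $\mathrm m_0=m\ox_\Gamma(a_2\ox b_2)$, together with the outer actions $\gamma\cdot(m\ox_\Gamma(a\ox b))=m\ox_\Gamma(\gamma a\ox b)$ and $(m\ox_\Gamma(a\ox b))\cdot\gamma=m\ox_\Gamma(a\ox b\gamma)$.

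For (1) I would simply note that $f^\uparrow$ alters only the $M$-slot while $\rho_N$ produces its $\Gamma$-factor and its residual copies of $a,b$ purely from the $\Gamma^e$-slot; applying $\rho_N$ to $f^\uparrow(\mathrm m)=f(m)\ox_\Gamma(a\ox b)$ gives $(a_1b_1)\ox(f(m)\ox_\Gamma(a_2\ox b_2))$, which is exactly $\mathrm m_{-1}\ox f^\uparrow(\mathrm m_0)$. For (2) I would compute $\rho_M(\mathrm m\cdot\gamma)=\rho_M(m\ox_\Gamma(a\ox b\gamma))$ and invoke that $\Delta$ is an algebra map, $\Delta(b\gamma)=\Delta(b)\Delta(\gamma)$, to split $(b\gamma)_1\ox(b\gamma)_2=b_1\gamma_1\ox b_2\gamma_2$; this collects to $(a_1b_1\gamma_1)\ox(m\ox_\Gamma(a_2\ox b_2\gamma_2))=\mathrm m_{-1}\gamma_1\ox\mathrm m_0\gamma_2$. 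Identity (3) is the mirror image: from $\gamma\cdot\mathrm m=m\ox_\Gamma(\gamma a\ox b)$ and $\Delta(\gamma a)=\Delta(\gamma)\Delta(a)$ one gets $(\gamma_1a_1b_1)\ox(m\ox_\Gamma(\gamma_2a_2\ox b_2))=\gamma_1\mathrm m_{-1}\ox\gamma_2\mathrm m_0$. Thus only the multiplicativity of $\Delta$ is needed for (2) and (3).

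To deduce the final clause, that $(-)^\uparrow$ lands in the category of Hopf bimodules, I would assemble these as follows. Identities (2) and (3) say precisely that $\rho_M$ is a map of $\Gamma$-bimodules, with the diagonal structure on $\Gamma\ox M^\uparrow$, and identity (1) says that $f^\uparrow$ is a comodule map, hence---being visibly a bimodule map, since its bimodule structure comes entirely through the $\Gamma^e$-factor it does not touch---a morphism of Hopf bimodules. It remains only to record that $\rho_M$ is genuinely a coaction, i.e. coassociative and counital; this follows from coassociativity and counitality of $\Gamma$ (together with the multiplicativity of $\Delta$ and $\epsilon$) by the same Sweedler reindexing that exhibits $\Gamma$ as a comodule over itself, applied to the product $a_1b_1$. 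Functoriality, $(gf)^\uparrow=g^\uparrow f^\uparrow$ and $\mathrm{id}^\uparrow=\mathrm{id}$, is inherited directly from the induction functor $-\ox_\Gamma\Gamma^e$.

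There is no serious obstacle here; the entire content is careful bookkeeping with Sweedler indices. The one genuine pitfall, which I would flag explicitly when writing (3), is that the left $\Gamma$-action defining the \emph{bimodule} structure on $M^\uparrow$ is the \emph{outer} action, left multiplication in the first tensor factor of $\Gamma^e$, and must not be confused with the twisted-diagonal action used to form the tensor product $M\ox_\Gamma\Gamma^e$; it is precisely because these two actions commute, as established when $\Gamma^e$ was shown to be a $(\Gamma\text{-}\Gamma^e)$-bimodule, that the computation for (3) respects the defining relations of the quotient. A slicker but equivalent route would be to observe that $\rho_M=\mathrm{id}_M\ox_\Gamma\rho$, where $\rho\colon\Gamma^e\to\Gamma\ox\Gamma^e$, $a\ox b\mapsto a_1b_1\ox(a_2\ox b_2)$, already makes $\Gamma^e$ a Hopf bimodule for the outer structure; then (1)--(3) and the functor claim all follow formally from functoriality of $-\ox_\Gamma\Gamma^e$.
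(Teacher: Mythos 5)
Your proposal is correct and follows essentially the same route as the paper: a direct check of all three identities on representatives $m\ox_\Gamma(\gamma\ox\gamma')$ using the defining formula for $\rho_M$ and the outer bimodule actions (the paper writes out only identity (1) and leaves the rest to the reader). Your additional remarks on well-definedness, coassociativity of $\rho_M$, and the reformulation $\rho_M=\mathrm{id}_M\ox_\Gamma\rho$ are sound but not needed beyond what the paper's definitions already establish.
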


The reader should be aware that we will be using the $^\uparrow$
notation on maps in a slightly more flexible manner throughout the
paper.  We will be generally be looking at the induced map composed
with some convenient isomorphism.

\begin{proof}
These can all be checked directly from the definitions.  For example, for (1), we have
\beq
\ba{rl}
\rho_M(f^\uparrow(m\ox_\Gamma(\gamma\ox\gamma')))&=\rho_M(f(m)\ox_\Gamma(\gamma\ox\gamma'))\\
&=\gamma_1\gamma'_1\ox f(m)\ox_\Gamma(\gamma_2\ox\gamma_2')=\gamma_1\gamma'_1\ox f^\uparrow(m\ox_\Gamma(\gamma_2\ox\gamma_2')).
\ea
\eeq
\end{proof}

\begin{corollary}
For any complex $X$ of right $\Gamma$-modules the induced complex $X^\uparrow$ is a complex of Hopf bimodules.
\end{corollary}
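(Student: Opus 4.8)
The plan is to reduce the statement about complexes to the degreewise statement already established for modules. The preceding proposition shows that for any single right $\Gamma$-module $M$, the induced module $M^\uparrow$ is a Hopf bimodule, and that $(-)^\uparrow$ is functorial on $\mathrm{mod}$-$\Gamma$. Since a complex $X$ of right $\Gamma$-modules is nothing but a collection of modules $X^i$ together with differentials $d_X^i:X^i\to X^{i+1}$ that are $\Gamma$-module maps, the first step is simply to observe that each $(X^i)^\uparrow$ is a Hopf bimodule, so $X^\uparrow$ is a complex of Hopf bimodules \emph{provided} its differentials $(d_X^i)^\uparrow$ respect the Hopf bimodule structure.

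Thus the heart of the proof is to verify that each induced differential is a morphism of Hopf bimodules, which by definition means it is simultaneously a map of $\Gamma$-bimodules and a map of $\Gamma$-comodules. The bimodule compatibility is automatic: $(-)^\uparrow = (-)\ox_\Gamma\Gamma^e$ is a functor valued in $\Gamma$-bimodules, so each $(d_X^i)^\uparrow$ is a $\Gamma$-bimodule map by construction. The comodule compatibility is exactly the content of part (1) of the preceding proposition: applied to $f = d_X^i$, it gives $\rho_{X^{i+1}}\big((d_X^i)^\uparrow(\mathrm{m})\big) = \mathrm{m}_{-1}\ox (d_X^i)^\uparrow(\mathrm{m}_0)$, which is precisely the statement that $(d_X^i)^\uparrow$ intertwines the coactions $\rho_{X^i}$ and $\rho_{X^{i+1}}$. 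Hence each differential is a Hopf bimodule map.

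Having established that the objects $(X^i)^\uparrow$ are Hopf bimodules and the differentials between them are Hopf bimodule maps, I would conclude that $X^\uparrow$ is a complex in the category ${}_\Gamma^\Gamma\mathcal{M}_\Gamma$ of Hopf bimodules, which is the assertion. I do not anticipate any genuine obstacle here, since the corollary is essentially a formal consequence of functoriality together with property (1); the only thing to be careful about is confirming that the coactions in question are the honest degreewise coactions $\rho_{X^i}$ assembled into a single coaction on the total induced complex $X^\uparrow = X\ox_\Gamma\Gamma^e$, which follows from the remark after Definition \ref{codef} that $\rho_M$ is defined for complexes by the same formula as in the module case.
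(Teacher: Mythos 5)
Your proof is correct and follows essentially the same route as the paper's: the paper's one-line argument likewise invokes part (1) of the preceding proposition for colinearity of the induced differentials and the $\Gamma$-linearity of the differentials on $X$ (via functoriality of $(-)^\uparrow$) for the bimodule compatibility. You have simply written out in full the details the paper leaves implicit.
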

\begin{proof}
This follows from part (1) of the pervious proposition and the fact that the differentials on $X$ are $\Gamma$-linear.
\end{proof}

\begin{proposition}
The quasi-isomorphism $\xi^{\uparrow}:L^{\uparrow}\to \Gamma$ of Lemma \ref{setup} is a quasi-isomorphism of complexes of Hopf bimodules.
\label{propH}
\end{proposition}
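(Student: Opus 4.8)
The plan is to observe that almost all of the required content has already been assembled, so that the sole remaining task is to verify that $\xi^\uparrow$ respects the coactions. Indeed, Lemma \ref{setup}(2) already gives that $\xi^\uparrow:L^\uparrow\to\Gamma$ is a quasi-isomorphism of complexes of $\Gamma$-bimodules, the corollary above identifies $L^\uparrow$ as a complex of Hopf bimodules, and $\Gamma$ is a Hopf bimodule under its regular bimodule structure together with the coaction $\Delta$. Since the notion of a morphism of Hopf bimodules is exactly that of a map which is simultaneously a bimodule map and a comodule map, the proposition reduces to checking, in each degree, that $\xi^\uparrow$ is a morphism of left $\Gamma$-comodules. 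Moreover, because the differentials of $L^\uparrow$ are degreewise Hopf-bimodule maps (part (1) of the functoriality proposition above) and $\xi^\uparrow$ is already a chain map, it suffices to carry out this verification in a single fixed degree.

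Concretely, adopting the Sweedler-style notation $\rho_L(\mathrm{m})=\mathrm{m}_{-1}\ox\mathrm{m}_0$ for the coaction of Definition \ref{codef}, I would phrase the needed identity as $\Delta(\xi^\uparrow(\mathrm{m}))=\mathrm{m}_{-1}\ox\xi^\uparrow(\mathrm{m}_0)$ for every $\mathrm{m}\in L^\uparrow$; equivalently, $(\mathrm{id}_\Gamma\ox\xi^\uparrow)\circ\rho_L=\Delta\circ\xi^\uparrow$ as maps $L^\uparrow\to\Gamma\ox\Gamma$. This parallels part (1) of the earlier proposition and is the only condition not already guaranteed by Lemma \ref{setup}.

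I would then evaluate both sides on a general element $\ell\ox_\Gamma(\gamma\ox\gamma')$. On the left, the definition of $\rho_L$ gives $(\gamma_1\gamma_1')\ox\big(\ell\ox_\Gamma(\gamma_2\ox\gamma_2')\big)$, and applying $\xi^\uparrow$ to the second tensor factor yields $(\gamma_1\gamma_1')\ox\xi(\ell)\gamma_2\gamma_2'$. On the right, since $\xi(\ell)\in\k$ is a scalar and $\Delta$ is an algebra homomorphism, one has $\Delta(\xi(\ell)\gamma\gamma')=\xi(\ell)\,\Delta(\gamma)\Delta(\gamma')=(\gamma_1\gamma_1')\ox\xi(\ell)\gamma_2\gamma_2'$. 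The two expressions agree, which establishes the claim.

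The only point worth flagging is that there is no genuine obstacle here: the coaction $\rho_L$ was engineered so that the product $\gamma_1\gamma_1'$ sitting in its first slot matches precisely the multiplicativity of $\Delta$ on $\Gamma$, and the computation above is simply the manifestation of that design. Thus the proof is a short Sweedler-notation check, after which the bimodule and quasi-isomorphism properties carried over from Lemma \ref{setup} complete the argument that $\xi^\uparrow$ is a quasi-isomorphism of complexes of Hopf bimodules.
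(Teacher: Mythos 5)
Your proposal is correct and follows exactly the route the paper intends: the paper's own proof simply states that the claim ``can be checked directly from the definition of $\xi^{\uparrow}$ and the definitions of the coactions on $L^\uparrow$ and $\Gamma$,'' and your Sweedler-notation computation is precisely that check, written out. The reduction to colinearity (with the bimodule and quasi-isomorphism properties imported from Lemma \ref{setup}) and the verification $(\mathrm{id}_\Gamma\ox\xi^\uparrow)\circ\rho_L=\Delta\circ\xi^\uparrow$ on a general element $\ell\ox_\Gamma(\gamma\ox\gamma')$ are both accurate.
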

\begin{proof}
This can be checked directly from the definition of $\xi^{\uparrow}$ and the definitions of the coactions on $L^\uparrow$ and $\Gamma$.
\end{proof}

\section{Bimodule resolutions of $A\#\Gamma$ via a smash product construction}
\label{bimodrez}

Let $\Gamma$ be a Hopf algebra and $A$ be a $\Gamma$-module algebra.
We recall here that a $\k$-linear map $M\to N$ of (right or left)
$A\#\Gamma$-modules is $A\#\Gamma$-linear if and only if it is
$A$-linear and $\Gamma$-linear independently.  The following
definition was given by Kaygun in \cite{kaygun}.

\begin{definition}
A vector space $M$ is called a $\Gamma$-equivariant $A$-bimodule if it
is both a $\Gamma$-module and $A$-bimodule, and the structure maps
$A\ox M\to M$ and $M\ox A\to M$ are maps of $\Gamma$-modules.
Morphisms of $\Gamma$-equivariant $A$-bimodules are maps which are
$A^e$-linear and $\Gamma$-linear independently.  The category of such
modules will be denoted $\mathrm{EQ}_{\Gamma} A^e$-mod.  We define
$\Gamma$-equivariant $A^e$-complexes similarly.
\label{equiv}
\end{definition}

To ease notation we may at times write ``equivariant bimodule''
instead of the full $\Gamma$-equivariant $A$-bimodule.  One example of
an equivariant bimodule is $A$ itself.  One can think of an
equivariant bimodule as an $A$-bimodule internal to the monoidal
category ($\Gamma$-mod,$\ox$).
\par
Kaygun has shown that the category $\mathrm{EQ}_{\Gamma} A^e$-mod is
actually the module category of a certain smash product $A^e\# \Gamma$
\cite[Lemma 3.3]{kaygun}.  Whence $\mathrm{EQ}_{\Gamma} A^e$-mod is
seen to be abelian with enough projectives.  Additionally,
$\mathrm{EQ}_{\Gamma} A^e$-mod comes equipped with restriction
functors (forgetful functors) to $A^e$-modules and $\Gamma$-modules.
Since $A^e\#\Gamma$ is free over both $A^e$ and $\Gamma$, one can
verify that these restriction functors preserve projectives.
\par
In this section we produce a projective bimodule resolution of
$A\#\Gamma$ via the smash product construction outlined below.

\begin{definition}
Let $X$ be any $\Gamma$-equivariant $A^e$-complex and let $Y$ be any
complex of Hopf bimodules.  The smash product complex $X\#Y$ is
defined to be the tensor complex $X\ox Y$ with the left and right $A\#\Gamma$-actions
\beq
a\cdot (x\ox y):=(ax)\ox y,\ \gamma\cdot (x\ox y):=(\gamma_1x)\ox (\gamma_2y),\ (x\ox y)\cdot a:=x(^{y_{-1}}a)\ox y_0
\eeq
and
\beq
(x\ox y)\cdot \gamma:=x\ox (y\gamma),
\eeq
for $x\in X$, $y\in Y$, $a\in A$ and $\gamma\in \Gamma$.
\label{defOne}
\end{definition}

Obviously, we can define the smash product of an equivariant bimodule
with a Hopf bimodule by considering them to be complexes concentrated
in degree 0.  The smash product construction is (bi)functorial in the
sense of the following

\begin{lemma}
If $f:X\to X'$ and $g:Y\to Y'$ are maps of complexes of
$\Gamma$-equivariant $A$-bimodules and complexes of Hopf bimodules
respectively, then the product map $f\ox g:X\# Y\to X'\# Y'$ is a map
of complexes of $A\#\Gamma$-bimodules.
\label{smashfunctr}
\end{lemma}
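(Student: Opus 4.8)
The plan is to verify directly that the map $f \ox g$ respects each of the four $A\#\Gamma$-actions appearing in Definition~\ref{defOne}, and then observe that compatibility with differentials is automatic from the tensor-complex structure. The key point to keep in mind is that $f$ is a morphism of $\Gamma$-equivariant $A^e$-complexes (so it is $A$-bilinear, $\Gamma$-linear, and commutes with differentials) while $g$ is a morphism of complexes of Hopf bimodules (so it is $\Gamma$-bilinear, a $\Gamma$-comodule map, and commutes with differentials). Since $A\#\Gamma$-linearity for a $\k$-linear map reduces to $A$-linearity and $\Gamma$-linearity checked independently (as recalled at the start of Section~\ref{bimodrez}), it suffices to treat the four structural equations one at a time.

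First I would check the two ``easy'' actions. For the left $A$-action, $(f\ox g)(a\cdot(x\ox y)) = (f\ox g)((ax)\ox y) = f(ax)\ox g(y) = a\,f(x)\ox g(y) = a\cdot\big((f\ox g)(x\ox y)\big)$, using only that $f$ is left $A$-linear. For the right $\Gamma$-action, $(f\ox g)\big((x\ox y)\cdot\gamma\big) = f(x)\ox g(y\gamma) = f(x)\ox g(y)\gamma$, using only that $g$ is right $\Gamma$-linear. The left $\Gamma$-action is handled the same way using the diagonal formula $\gamma\cdot(x\ox y) = (\gamma_1 x)\ox(\gamma_2 y)$ together with the $\Gamma$-linearity of both $f$ and $g$; here Sweedler notation lets us pass $\gamma_1,\gamma_2$ through each factor without incident.

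The one step that actually uses the coaction, and hence is the main thing to get right, is the right $A$-action
\beq
(x\ox y)\cdot a = x({}^{y_{-1}}a)\ox y_0.
\eeq
Applying $f\ox g$ gives $f\big(x({}^{y_{-1}}a)\big)\ox g(y_0)$, which by right $A$-linearity of $f$ equals $f(x)({}^{y_{-1}}a)\ox g(y_0)$. I must match this against $\big(f(x)\ox g(y)\big)\cdot a = f(x)\big({}^{g(y)_{-1}}a\big)\ox g(y)_0$. The two agree precisely because $g$ is a morphism of Hopf bimodules, so it is a comodule map: $\rho_{Y'}(g(y)) = y_{-1}\ox g(y_0)$, i.e. $g(y)_{-1}\ox g(y)_0 = y_{-1}\ox g(y_0)$. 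Substituting this Sweedler identity yields the desired equality. This is the crux of the argument, and I expect it to be the only place where the Hopf-comodule hypothesis on $g$ (as opposed to mere bimodule linearity) is genuinely needed.

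Finally, compatibility with the differentials follows from the fact that the differential on a smash product complex $X\#Y$ is just the total differential of the underlying tensor complex $X\ox Y$ (the $A\#\Gamma$-action does not enter the differential), together with the hypotheses that $f$ and $g$ are themselves chain maps; the usual Koszul sign appears symmetrically on both sides and cancels. Assembling the four action-compatibilities with the chain-map property, $f\ox g$ is simultaneously $A^e$-linear, $\Gamma$-linear, and a map of complexes, hence a morphism of complexes of $A\#\Gamma$-bimodules, as claimed.
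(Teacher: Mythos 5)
Your proposal is correct and follows exactly the same line of argument as the paper's proof: each of the four structural actions is checked separately, with the only nontrivial point being that right $A$-linearity requires the $\Gamma$-colinearity of $g$. You have simply written out in full the computations the paper leaves to the reader.
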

\begin{proof}
Left $A$-linearity of $f\ox g$ follows from left $A$-linearity of $f$
and right $\Gamma$-linearity follows from right $\Gamma$-linearity of
$g$.  Left $\Gamma$-linearity of $f\ox g$ follows from the fact that
both $f$ and $g$ are left $\Gamma$-linear.  Finally, right
$A$-linearity of $f\ox g$ follows from right $A$-linearity of $f$ and
$\Gamma$-colinearity of $g$.
\end{proof}
\par
Now, let $K$ be a projective resolution of $A$ as an $A$-bimodule,
with quasi-isomorphism $\tau:K\to A$.  We will assume that $K$ has the
following additional properties:

\begin{enumerate}[(I)]
\item there is a $\Gamma$-action on $K$ giving it the structure of a
  complex of $\Gamma$-equivariant $A$-bimodules, and the
  quasi-isomorphism $\tau: K\to A$ is $\Gamma$-equivariant.
\item $K$ is free over $A^e$ on a graded base space $\bar{K}\subset K$
  which is also a $\Gamma$-submodule.
\end{enumerate}

An example of a resolution of $A$ satisfying the above conditions is the bar resolution
\beq
BA=\cdots\to A\ox A^{\ox 2}\ox A\to A\ox A\ox A\to A\ox A\to 0,
\eeq
with its standard differential
\beqlbl
b\ox a_1\ox\dots \ox a_n\ox b'\mapsto 
\ba{l}
ba_1\ox\dots\ox b'+(-1)^n b\ox\dots\ox a_nb'\\
\hspace{5mm}+\sum_{i=1}^{n-1}(-1)^{i}b\ox\dots\ox a_ia_{i+1}\ox\dots\ox b'.
\ea
\label{bardif}
\eeqlbl
We give $BA$ the natural diagonal $\Gamma$-action
\beq
\gamma\cdot(b\ox a_1\ox\dots \ox a_n\ox b')=\gamma_1 b\ox \gamma_2a_1\ox\dots\ox \gamma_{n+1}a_n\ox \gamma_{n+2}b'.
\eeq
In this case, $\overline{BA}$ will be the graded subspace
$\overline{BA}=\bigoplus_n \k\ox A^{\ox n}\ox \k$.  One can also use
the reduced bar complex or, if $A$ is a Koszul algebra and $\Gamma$
acts by graded endomorphisms, we can take $K$ to be the Koszul resolution.
\par
For any $K$ satisfying (I) and (II), and any resolution $L$ of the
trivial right $\Gamma$-module $\k$, we can form the smash product
complex $K\# L^{\uparrow}$ using the coaction on $L^\uparrow$ defined
in the previous section.  We will see that the smash product complex
$K\# L^{\uparrow}$ provides a projective resolution of $A\#\Gamma$.

\begin{lemma}
Suppose $\tau:K\to A$ is a bimodule resolution of $A$ satisfying (I)
and (II), and let $\xi:L\to \k$ be any projective resolution of the
trivial right $\Gamma$-module.  Let $\xi^\uparrow:L^\uparrow\to
\Gamma$ be the quasi-isomorphism of Lemma \ref{setup}.  Then the
product map $\tau\ox \xi^\uparrow: K\# L^{\uparrow}\to A\#\Gamma$ is a
quasi-isomorphism of $(A\#\Gamma)^e$-complexes.
\label{lemmamma}
\end{lemma}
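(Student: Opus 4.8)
The goal is to show that the product map $\tau\ox\xi^\uparrow: K\# L^\uparrow \to A\#\Gamma$ is a quasi-isomorphism of $(A\#\Gamma)^e$-complexes. The plan is to first verify that this map is a genuine morphism of $A\#\Gamma$-bimodule complexes, and then to reduce the quasi-isomorphism claim to facts already established in the excerpt by examining the underlying complex of vector spaces. The bimodule-map claim follows from Lemma \ref{smashfunctr}: since $\tau:K\to A$ is a map of $\Gamma$-equivariant $A$-bimodules (by condition (I)) and $\xi^\uparrow:L^\uparrow\to\Gamma$ is a map of complexes of Hopf bimodules (by Proposition \ref{propH}), the product $\tau\ox\xi^\uparrow=\tau\#\xi^\uparrow$ is automatically a map of $A\#\Gamma$-bimodule complexes. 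Here I would interpret $A\#\Gamma$ as the smash product $A\# \Gamma$ of the equivariant bimodule $A$ with the Hopf bimodule $\Gamma$, and check that the bimodule structure coming from Definition \ref{defOne} agrees with the regular $(A\#\Gamma)^e$-structure; this is where the twisted right action $(x\ox y)\cdot a = x({^{y_{-1}}a})\ox y_0$ encodes the multiplication rule of the smash product.

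For the quasi-isomorphism statement, the key observation is that quasi-isomorphism is detected on underlying complexes of $\k$-vector spaces, so I may forget all module structure and simply analyze $K\ox L^\uparrow$ as a complex. The plan is to factor $\tau\ox\xi^\uparrow$ through the intermediate stage $K\ox(\k\ox_\Gamma\Gamma^e) \cong K\ox\Gamma$, using the bimodule isomorphism $\varphi:\k\ox_\Gamma\Gamma^e\overset{\cong}\to\Gamma$ constructed in the proof of Lemma \ref{setup}. The main point is that the total complex $K\# L^\uparrow = K\ox L^\uparrow$ is a double complex (one differential from $K$, one from $L^\uparrow$), so I can run a standard spectral-sequence or Künneth-style argument: since $\xi:L\to\k$ is a quasi-isomorphism of right $\Gamma$-modules and $\Gamma^e$ is flat over $\Gamma$ (Lemma \ref{freelem}), the induced map $\xi\ox_\Gamma\Gamma^e:L^\uparrow\to\k\ox_\Gamma\Gamma^e$ is a quasi-isomorphism, exactly as recorded in the proof of Lemma \ref{setup}.

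Concretely, I would argue that $\mathrm{id}_K\ox(\xi\ox_\Gamma\Gamma^e):K\ox L^\uparrow\to K\ox(\k\ox_\Gamma\Gamma^e)$ is a quasi-isomorphism, because tensoring a quasi-isomorphism of complexes of $\k$-vector spaces with a fixed complex (here $K$, which is a complex of free, hence flat, $\k$-modules) preserves quasi-isomorphisms. Composing with the isomorphism $\mathrm{id}_K\ox\varphi$ identifies the target with $K\ox\Gamma$, and under this identification the remaining factor is $\tau\ox\mathrm{id}_\Gamma:K\ox\Gamma\to A\ox\Gamma=A\#\Gamma$. Since $\tau:K\to A$ is a quasi-isomorphism and $\Gamma$ is free (hence flat) over $\k$, the map $\tau\ox\mathrm{id}_\Gamma$ is again a quasi-isomorphism by the same flatness argument. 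Chasing the definitions shows $\tau\ox\xi^\uparrow$ equals the composite $(\tau\ox\mathrm{id}_\Gamma)\circ(\mathrm{id}_K\ox\varphi)\circ(\mathrm{id}_K\ox(\xi\ox_\Gamma\Gamma^e))$, so it is a quasi-isomorphism as a composite of quasi-isomorphisms.

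The step I expect to be the main obstacle is the compatibility bookkeeping rather than the homological core: verifying cleanly that the composite of the three maps above really does equal $\tau\ox\xi^\uparrow$ on the nose, and confirming that the bimodule structure on the smash product complex $K\# L^\uparrow$ matches the regular $(A\#\Gamma)^e$-structure so that the morphism is genuinely $(A\#\Gamma)^e$-linear. Both reduce to unwinding the twisted coaction $\rho$ from Definition \ref{codef} and the smash-product multiplication, which is routine but requires care with Sweedler indices. Once the map is known to be a bimodule morphism and a $\k$-linear quasi-isomorphism, it is a quasi-isomorphism of $(A\#\Gamma)^e$-complexes by definition.
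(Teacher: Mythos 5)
Your proposal is correct, and its homological core is identical to the paper's: the paper also disposes of the quasi-isomorphism claim in one line by observing that $\tau$ and $\xi^\uparrow$ are quasi-isomorphisms of complexes of $\k$-vector spaces and that a tensor product of quasi-isomorphisms over a field is again a quasi-isomorphism; your explicit factorization $(\tau\ox\mathrm{id}_\Gamma)\circ(\mathrm{id}_K\ox\varphi)\circ(\mathrm{id}_K\ox(\xi\ox_\Gamma\Gamma^e))$ just unwinds the definition of $\xi^\uparrow$ from Lemma \ref{setup} and is a fine way to make the same point. Where you genuinely diverge is the linearity step: the paper verifies left $\Gamma$-linearity and right $A$-linearity of $\tau\ox\xi^\uparrow$ by direct Sweedler computations (using Proposition \ref{propH} for the right $A$-action), whereas you invoke the functoriality statement Lemma \ref{smashfunctr} applied to $\tau$ and $\xi^\uparrow$. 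That is a legitimate and arguably cleaner route, but it only proves linearity for the smash-product bimodule structure of Definition \ref{defOne} on $A\#\Gamma=A\ox\Gamma$, so the one substantive check you must actually carry out --- and which you correctly flag --- is that this smash-product structure coincides with the regular $(A\#\Gamma)^e$-structure on the algebra $A\#\Gamma$ (e.g.\ $(x\ox y)\cdot a = x({}^{y_1}a)\ox y_2$ matches right multiplication by $a\ox 1$, and $(x\ox y)\cdot\gamma = x\ox y\gamma$ matches right multiplication by $1\ox\gamma$ since ${}^{y_1}1=\epsilon(y_1)1$). The paper's element-wise computations effectively bundle this identification together with the linearity check; your version separates the two, at the cost of having to do the (routine) identification explicitly. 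Either way the argument closes, so there is no gap.
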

\begin{proof}
The fact that $\tau\ox \xi^{\uparrow}$ is a quasi-isomorphism follows
from the facts that both $\tau$ and $\xi^{\uparrow}$ are
quasi-isomorphisms, and that the tensor product of any two
quasi-isomoprhisms (over a field) is yet another quasi-isomorphism.
It is trivial to check that $\tau\ox \xi^{\uparrow}$ respects the left
$A$-action and right $\Gamma$-action.   One can simply use the
definition of the $A$ and $\Gamma$-actions on $K\# L^\uparrow$ given
in Definition \ref{defOne}.  For the left $\Gamma$-action we have, for
any $x\in K$, $l\in L^{\uparrow}$, and $\gamma\in \Gamma$,
\beq
\ba{rl}
(\tau\ox \xi^{\uparrow})(\gamma\cdot(x\ox l))&= \tau(\gamma_1x)\xi^{\uparrow}(\gamma_2l)\\
&={^{\gamma_1}\tau(x)}\gamma_2\xi^{\uparrow}(l)\\
&=\gamma(\tau(x)\xi^{\uparrow}(l))\\
&=\gamma\cdot \left((\tau\ox \xi^{\uparrow})(x\ox l)\right).
\ea
\eeq
So $\tau\ox \xi^{\uparrow}$ is a map of left $\Gamma$-complexes.
\par
Recall that, by Proposition \ref{propH}, the quasi-isomorphism
$\xi^\uparrow:L^\uparrow\to \Gamma$ is one of Hopf bimodules.  Hence
$l_{-1}\ox \xi^{\uparrow}(l_0)=\xi^{\uparrow}(l)_1\ox
\xi^{\uparrow}(l)_2$ for all $l\in L^{\uparrow}$.  Thus, for any $a\in
A$ we have
\beq
\ba{rlr}
(\tau\ox \xi^{\uparrow})((x\ox l)\cdot a)&= \tau(x({^{l_{-1}}a}))\xi^{\uparrow}(l_0)\\
&=\tau(x({^{\xi^\uparrow(l)_1}a}))\xi^{\uparrow}(l)_2 \\
&=\tau(x)({^{\xi^\uparrow(l)_1}a}\xi^{\uparrow}(l)_2) & (A\text{-linearity of $\tau$})\\
&=(\tau(x)\xi^{\uparrow}(l))a\\
&=\left((\tau\ox \xi^{\uparrow})(x\ox l)\right)a.
\ea
\eeq
This verifies that $\tau\ox \xi^{\uparrow}$ is a map of right
$A$-complexes and completes the proof that $\tau\ox \xi^{\uparrow}$ is
a quasi-isomorphism of $(A\#\Gamma)^e$-complexes.
\end{proof}

\begin{theorem}
Let $K$ be a bimodule resolution of $A$ satisfying conditions (I) and
(II), and let $L$ be any projective resolution of the trivial right
$\Gamma$-module $\k$.  Then the smash product $K\# L^{\uparrow}$ is a
projective $A\#\Gamma$-bimodule resolution of $A\#\Gamma$.
\label{lol}
\end{theorem}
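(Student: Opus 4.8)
The statement asserts two things about $K\#L^{\uparrow}$: that the augmentation $\tau\otimes\xi^{\uparrow}$ to $A\#\Gamma$ is a quasi-isomorphism, and that each term is a projective $(A\#\Gamma)^{e}$-module. The first point is exactly Lemma \ref{lemmamma}, so I would simply invoke it. The whole content of the proof is therefore the second point, that each $K^{i}\#(L^{j})^{\uparrow}$ is projective as an $A\#\Gamma$-bimodule, and I would spend the proof on this.

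The plan is to reduce to a single free ``building block'' and then identify that block explicitly. Each $L^{j}$ is a projective right $\Gamma$-module, hence a direct summand of a free module $\Gamma^{(I)}$. Applying $(-)^{\uparrow}$, which is additive and lands in Hopf bimodules, exhibits $(L^{j})^{\uparrow}$ as a Hopf-bimodule summand of $(\Gamma^{(I)})^{\uparrow}=(\Gamma^{\uparrow})^{(I)}$. Smashing with the fixed term $K^{i}$ is again additive and functorial by Lemma \ref{smashfunctr}, so $K^{i}\#(L^{j})^{\uparrow}$ becomes an $A\#\Gamma$-bimodule summand of $(K^{i}\#\Gamma^{\uparrow})^{(I)}$. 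Since summands and arbitrary direct sums of projectives are projective, it suffices to prove the single assertion that $K^{i}\#\Gamma^{\uparrow}$ is a projective, in fact free, $(A\#\Gamma)^{e}$-module.

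For this I would use condition (II) to write $K^{i}\cong A^{e}\otimes\bar{K}^{i}$, i.e. the free $A$-bimodule $A\otimes\bar{K}^{i}\otimes A$ with $\bar{K}^{i}$ a $\Gamma$-submodule, and then construct an explicit isomorphism $(A\#\Gamma)\otimes\bar{K}^{i}\otimes(A\#\Gamma)\overset{\cong}\to K^{i}\#\Gamma^{\uparrow}$ of $A\#\Gamma$-bimodules. The map should send a generator $1\otimes w\otimes 1$ to $(1\otimes w\otimes 1)\otimes\overline{1\otimes 1}$, where $\overline{1\otimes 1}\in\Gamma^{\uparrow}=\Gamma\otimes_{\Gamma}\Gamma^{e}$, and be extended by bimodule linearity. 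The normal form for elements of induced modules provided by computation (\ref{cmptn}) guarantees that these generators span, and a count against the underlying vector space shows they do so freely. The degenerate cases $\Gamma=\k$ (where the claim collapses to $K^{i}=A^{e}\otimes\bar{K}^{i}$) and $A=\k$ (where it collapses to $\Gamma^{\uparrow}\cong\Gamma^{e}$) confirm the shape of the isomorphism.

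I expect the one genuinely delicate verification to be that this map respects the right $A$-action. By Definition \ref{defOne} the right $A$-structure on the smash product is twisted through the coaction, $(x\otimes y)\cdot a=x({}^{y_{-1}}a)\otimes y_{0}$, so checking right $A$-linearity amounts to showing that the coaction twist on $\Gamma^{\uparrow}$ is exactly compensated by the rule for commuting $\gamma$ past $a$ inside $A\#\Gamma$; concretely one must unwind the coaction of Definition \ref{codef} against the $\Gamma$-module-algebra structure of $A$, and this is where the bijective antipode, entering through the twisted diagonal that defines $\Gamma^{\uparrow}$, is really used. The left $A$-action, the right $\Gamma$-action, and the two $\Gamma$-actions are comparatively routine, following the same pattern as the linearity checks in the proof of Lemma \ref{lemmamma}. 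Once this building-block isomorphism is in hand, combining it with the reduction above and the quasi-isomorphism of Lemma \ref{lemmamma} completes the proof.
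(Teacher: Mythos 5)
Your proposal follows essentially the same route as the paper: invoke Lemma \ref{lemmamma} for the quasi-isomorphism, reduce projectivity to the individual terms $K^{i}\#(L^{j})^{\uparrow}$, pass to the free case by realizing $L^{j}$ as a summand of a free module, and then identify the resulting smash product as the free $(A\#\Gamma)$-bimodule on a base space coming from condition (II). The paper packages this as Lemma \ref{lastlem}, keeping a general base $\bar{N}$ for the free $\Gamma$-module rather than first decomposing it into copies of $\Gamma$ as you do; that difference is cosmetic, and you correctly locate the delicate point (right $A$-linearity through the coaction twist, where the antipode enters).

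The one soft spot is your justification of injectivity: a surjection between vector spaces of the same infinite dimension need not be injective, so ``a count against the underlying vector space'' does not by itself show the generators span freely. The clean repair --- and what the paper actually does --- is to write down the explicit inverse; in your normalization it sends $(a\gamma)\ox w\ox(a'\gamma')$ to the element with the antipode twists distributed as $a\gamma_{2}\ox\bigl(S^{-1}(\gamma_{1})w\bigr)\ox\bigl({}^{S(\gamma_{3})}a'\bigr)\gamma'$, and one checks directly that this is a two-sided inverse. With that substitution your argument is complete.
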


The proof of the theorem will be clear from the following lemma.

\begin{lemma}
Let $M$ be a $\Gamma$-equivariant bimodule which is free on a base
space $\bar{M}\subset M$ satisfying $\Gamma \bar{M}=\bar{M}$, and
suppose $N$ is a projective right $\Gamma$-module.  Then the smash
product module $M\# N^\uparrow$ is projective over $A\#\Gamma$.
\label{lastlem}
\end{lemma}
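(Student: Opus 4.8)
The plan is to reduce to the rank-one case $N=\Gamma$ and then to identify $M\#\Gamma^\uparrow$ with a genuinely \emph{free} $A\#\Gamma$-bimodule, the freeness coming not from $\bar M$ (which may be an arbitrary $\Gamma$-module) but from the induced Hopf bimodule $\Gamma^\uparrow$, after untwisting the diagonal left $\Gamma$-action with the antipode.

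\emph{Reduction.} Since $N$ is projective it is a direct summand of a free right $\Gamma$-module, and both $(-)^\uparrow=(-)\ox_\Gamma\Gamma^e$ and $M\#(-)=M\ox(-)$ are additive functors commuting with direct sums. A direct sum of free bimodules is free, a direct summand of a projective bimodule is projective, and $M\#(N\oplus N')^\uparrow\cong(M\#N^\uparrow)\oplus(M\#N'^\uparrow)$ respects the $A\#\Gamma$-bimodule structure. Hence it suffices to prove that for $N=\Gamma$ the module $M\#\Gamma^\uparrow$ is free over $(A\#\Gamma)^e$.

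\emph{The free model and the entangled action.} I claim $M\#\Gamma^\uparrow\cong(A\#\Gamma)\ox\bar M\ox(A\#\Gamma)$, free over $(A\#\Gamma)^e$ on $\bar M$. Write $\bar y_0:=1\ox_\Gamma(1\ox1)\in\Gamma^\uparrow$ and consider the elements $\bar m\ox\bar y_0$. The coaction on $\bar y_0$ is trivial, $\rho(\bar y_0)=1\ox\bar y_0$, so by Definition \ref{defOne} the right $A$-action is untwisted, $(\bar m\ox\bar y_0)\cdot a=\bar m a\ox\bar y_0$; similarly the left $A$-action $a\cdot(\bar m\ox\bar y_0)=a\bar m\ox\bar y_0$ and the right $\Gamma$-action $(\bar m\ox\bar y_0)\cdot\gamma=\bar m\ox\bar y_0\gamma$ each touch only one tensor factor. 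The sole entangled action is the diagonal left $\Gamma$-action $\gamma\cdot(\bar m\ox\bar y_0)=\gamma_1\bar m\ox(1\ox_\Gamma(\gamma_2\ox1))$.

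\emph{Untwisting.} Define an $(A\#\Gamma)^e$-linear map $\Phi\colon(A\#\Gamma)\ox\bar M\ox(A\#\Gamma)\to M\#\Gamma^\uparrow$ on generators by $\Phi(1\ox\bar m\ox1):=\bar m\ox\bar y_0$; this is well defined since $\{1\ox\bar m\ox1\}$ freely generates the source over $(A\#\Gamma)^e$. A direct Sweedler computation, using $A$-linearity and the triviality of the coaction on $\bar y_0$ exactly as in the proof of Lemma \ref{lemmamma}, yields $\Phi((a\gamma)\ox\bar m\ox(a'\gamma'))=a(\gamma_1\bar m)({}^{\gamma_2}a')\ox(1\ox_\Gamma(\gamma_3\ox\gamma'))$. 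The assignment $(\gamma,\bar m)\mapsto(\gamma_1\bar m,\gamma_2)$ recording the entanglement is invertible, its inverse being induced by the standard antipode-untwisting $\gamma\ox v\mapsto\gamma_1\ox S(\gamma_2)v$; bijectivity of the antipode therefore makes $\Phi$ a bijection, so $M\#\Gamma^\uparrow$ is free and, by the reduction above, the lemma follows.

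\emph{Main obstacle.} The substance is the verification that $\Phi$ is bijective, and this is exactly the place where bijectivity of the antipode is essential: it is what converts the diagonal left $\Gamma$-action into an action on a single tensor factor, and hence what forces $M\#\Gamma^\uparrow$ to be free even though $\bar M$ carries an arbitrary, typically non-projective, $\Gamma$-module structure (for instance the diagonal action on $A^{\ox n}$ appearing in the bar resolution $\overline{BA}$). By comparison, the check that $\Phi$ intertwines all four actions of Definition \ref{defOne} is routine Sweedler bookkeeping of the same flavour as in Lemma \ref{lemmamma}.
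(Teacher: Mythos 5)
Your proof is correct and is essentially the paper's argument: reduce to the free case, identify $M\# N^\uparrow$ with the free bimodule $A\#\Gamma\ox(\bar M\ox\bar N)\ox A\#\Gamma$ via the action map, and invert it by untwisting the diagonal left $\Gamma$-action with the antipode, then pass to direct summands for projective $N$. One small imprecision: the inverse of your entanglement $(\gamma,\bar m)\mapsto(\gamma_1\bar m,\gamma_2)$ is $v\ox\gamma\mapsto\gamma_2\ox S^{-1}(\gamma_1)v$ rather than the $S$-formula you display, and the right $A$-factor must be untwisted as well --- the paper's explicit inverse $(a\ox m\ox a')\ox(\gamma\ox n\ox\gamma')\mapsto a\gamma_2\ox(S^{-1}(\gamma_1)m\ox n)\ox{}^{S(\gamma_3)}a'\gamma'$ records both, and your appeal to bijectivity of the antipode is exactly what makes this work.
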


\begin{proof}
Suppose that $N$ is free on some base $\bar{N}\subset N$.  Then we
have (bi)module isomorphisms $A\ox \bar{M}\ox A\cong M$ and
$\bar{N}\ox \Gamma\cong N$ given by restricting the action maps $A\ox
M\ox A\to M$ and $N\ox\Gamma\to N$.  We will also have $\Gamma\ox
\bar{N}\ox \Gamma\cong N^\uparrow$.  Now the restriction of the action
map on $M\# N\cong (A\ox\bar{M}\ox A)\#(\Gamma\ox \bar{N}\ox\Gamma)$
provides an isomorphism
\beq
A\#\Gamma\ox (\bar{M}\ox \bar{N})\ox A\#\Gamma\to M\# N^\uparrow
\eeq
with inverse
\beq
\ba{c}
M\# N^\uparrow\cong (A\ox \bar{M}\ox A)\#(\Gamma\ox \bar{N}\ox \Gamma)\to A\#\Gamma\ox (\bar{M}\ox \bar{N})\ox A\#\Gamma\\
(a\ox m\ox a')\ox(\gamma\ox n\ox \gamma')\mapsto a\gamma_2\ox(S^{-1}(\gamma_1)m\ox n)\ox {^{S(\gamma_3)}a'}\gamma'.
\ea
\eeq
So the smash product is a free $A\#\Gamma$-bimodule.
\par
In the case that $N$ is not free, we know that $N$ is a summand of
some free module $\mathcal{N}$.  This will imply that $N^\uparrow$ is
a summand of $\mathcal{N}^\uparrow$ as a Hopf bimodule.  It follows
that $M\# N^{\uparrow}$ is a summand of the free module
$M\#\mathcal{N}^\uparrow$, and hence projective.
\end{proof}

\begin{proof}[Proof of Theorem \ref{lol}]
We already know that there is a quasi-isomorphism of
$(A\#\Gamma)^e$-complexes $K\# L^{\uparrow}\to A\#\Gamma$, by Lemma
\ref{lemmamma}.  So we need only show that the smash product complex
is projective in each degree.  We have chosen $K$ so that each $K^i$
is an equivariant bimodule satisfying the hypotheses of Lemma
\ref{lastlem}, and each $L^j$ is projective by choice.  So each $K^i\#
(L^j)^\uparrow=K^i\#(L^\uparrow)^j$ is projective by Lemma
\ref{lastlem}.  Now, projectivity of the smash product $K\#
L^\uparrow$ in each degree follows from the fact that each $(K\#
L^\uparrow)^n$ is a finite sum of projective modules $K^i\#(L^\uparrow)^j$.
\end{proof}

\begin{remark}
The resolution of $A\#\Gamma$ constructed above is one of a number
resolutions that have appeared in the literature.  In \cite{GuGu},
Guccione and Guccione provide a resolution $X$ of the smash
$A\#\Gamma$ which is the tensor product of the bar resolution of $A$
with the bar resolution of $\Gamma$, along with some explicit
differential.  In the case that $\Gamma$ is a group algebra, Shepler
and Witherspoon have provided a class of resolutions of the smash
product \cite[Section 4]{SW12}.  Our resolution $K\# L^{\uparrow}$ is
a member of their class of resolutions (up to isomorphism).  The
reader should be aware that the construction given in \cite{SW12} is
somewhat different than the one given here.
\end{remark}

\section{Hochschild cochains as derived invariants}
\label{HCvDI}

Let $\Gamma$ be a Hopf algebra and $A$ be a $\Gamma$-module algebra.

\begin{definition}
Let $M$ be a complex of $A\#\Gamma$-bimodules and let $X$ be a complex
of $\Gamma$-equivariant $A$-bimodules.  We define a right
$\Gamma$-module structure on the set of homs $\Hom_{A^e}(X,M)$ by the formula
\beq
f\cdot\gamma (x):=S(\gamma_1)f(\gamma_2 x)\gamma_3,
\eeq
where $f\in \Hom_{A^e}(X,M)$, $\gamma\in\Gamma$, and $x\in X$.
\label{notez}
\end{definition}

This action was also considered in \cite{GuGu}, and similar actions
have appeared throughout the literature (see for example \cite[Section
5]{KKZ09}).  The first portion of the action, $S(\gamma_1)f(\gamma_2
x)$, assures that $f\cdot \gamma$ preserves left $A$-linearity.  The
additional right action is necessary to preserve right $A$-linearity.

\begin{lemma}
Let $M$ be a complex of $A\#\Gamma$-bimodules and $X$ be a complex of
$\Gamma$-equivariant $A$-bimodules.  The $\Gamma$-module structure on
$\Hom_{A^e}(X,M)$ given in Definition \ref{notez} is compatible with
the differential on the hom complex.  That is to say,
$\Hom_{A^e}(X,-)$ is a functor from $A\#\Gamma$-{\rm complexes} to $\Gamma$-{\rm complexes}.
\label{lemprem}
\end{lemma}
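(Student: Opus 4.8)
The plan is to verify directly that the right $\Gamma$-action on $\Hom_{A^e}(X,M)$ given in Definition \ref{notez} commutes (in the graded sense) with the differential of the hom complex, so that each operator $(-)\cdot\gamma$ becomes a chain map and the functoriality claim follows. Recall the differential on $\Hom_{A^e}(X,M)$ sends $f$ to $d_M f-(-1)^{|f|}f\,d_X$. So the heart of the matter is to check the single identity $(d f)\cdot\gamma=d(f\cdot\gamma)$ for homogeneous $f$ and each $\gamma\in\Gamma$, together with the (essentially formal) checks that the formula really does define a right $\Gamma$-module structure and that each $f\cdot\gamma$ genuinely lands in $\Hom_{A^e}(X,M)$, i.e.\ is still $A^e$-linear.

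The key steps I would carry out, in order, are as follows. First, I would confirm that $f\cdot\gamma$ is $A^e$-linear: left $A$-linearity comes from the factor $S(\gamma_1)f(\gamma_2 x)$ after using that $X$ is $\Gamma$-equivariant (so the left $A$-action map $A\ox X\to X$ is $\Gamma$-linear) and the antipode axiom $S(\gamma_1)\gamma_2=\epsilon(\gamma)$, while right $A$-linearity is exactly what the trailing $\gamma_3$ is designed to repair, again using equivariance of the right action map $X\ox A\to X$ and the bimodule structure on $M$. This is the calculation the remark following Definition \ref{notez} already gestures at. Second, I would verify the module axioms $f\cdot 1=f$ and $(f\cdot\gamma)\cdot\delta=f\cdot(\gamma\delta)$; the latter is a short coassociativity-and-antialgebra-map computation, where one uses that $S$ is an anti-homomorphism so that the two copies of $S$ collapse correctly onto $S((\gamma\delta)_1)=S(\delta_1)S(\gamma_1)$. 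Third, and most importantly, I would check compatibility with the differential.

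The main obstacle is this last compatibility check, because it is where the $\Gamma$-module structures on $X$ and $M$ and the colinearity data all have to interact with the Sweedler indices simultaneously. Concretely, for homogeneous $f$ and $x$ one evaluates $\bigl((d_M f-(-1)^{|f|}f d_X)\cdot\gamma\bigr)(x)=S(\gamma_1)\bigl(d_M f(\gamma_2 x)-(-1)^{|f|}f(d_X(\gamma_2 x))\bigr)\gamma_3$ and compares it with $d(f\cdot\gamma)(x)=d_M\bigl(S(\gamma_1)f(\gamma_2 x)\gamma_3\bigr)-(-1)^{|f|}\bigl(S(\gamma_1)f(\gamma_2 d_X x)\gamma_3\bigr)$. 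The terms match provided $d_X$ is $\Gamma$-linear on $X$ (so that $d_X(\gamma_2 x)=\gamma_2 d_X x$, letting the two inner $f d_X$ contributions cancel) and $d_M$ is $A\#\Gamma$-linear, in particular a bimodule map, so that $d_M$ passes through the outer multiplications by $S(\gamma_1)$ and $\gamma_3$ in $M$. Since $X$ is a \emph{complex} of $\Gamma$-equivariant $A$-bimodules its differential is $\Gamma$-linear by definition, and $M$ is a complex of $A\#\Gamma$-bimodules so its differential is a bimodule map; with these two facts in hand the two expressions coincide term-by-term, with no surviving sign discrepancy, which establishes that $(-)\cdot\gamma$ is a chain map and hence that $\Hom_{A^e}(X,-)$ lands in $\Gamma$-complexes as claimed.
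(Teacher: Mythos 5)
Your proposal is correct and follows essentially the same route as the paper: the paper's proof is a two-line observation that $d(f)=d_Mf\pm fd_X$ commutes with the action because $d_X$ is $\Gamma$-linear and $d_M$ is $A\#\Gamma$-bimodule linear, which is exactly your third step. Your additional verifications (that $f\cdot\gamma$ remains $A^e$-linear and that the module axioms hold) are handled in the paper by the remark following Definition \ref{notez} rather than inside the proof, but they are consistent with what you wrote.
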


\begin{proof}
Recall that the differential on the hom complex is given by
$d:f\mapsto d_Mf\pm fd_X$.  So $\Gamma$-linearity of the differential
on the hom complex follows by $\Gamma$-linearity of $d_M$ and $d_X$.
\end{proof}

Let $L$ be a projective resolution of the trivial right
$\Gamma$-module $\k$, and $K$ be a bimodule resolution of $A$
satisfying conditions (I) and (II) of the previous section.   For a
complex of $A\#\Gamma$-bimodules $M$, any map $\theta\in \Hom_{\k}(K\#
L^{\uparrow}, M)$, and any $l\in L^{\uparrow}$, we let $\theta(-\ox
l)$ denote the $\k$-linear map
\beq
\ba{c}
K\to M\\
x\mapsto (-1)^{|x||l|}\theta((l_{-1}x)\ox l_{0}).
\ea
\eeq 
\par
Before giving the main theorem of this section let us highlight some
points of interest.  First, note that there is an embedding of chain
complexes $L\to L^\uparrow$, $\ell\mapsto \ell\ox_\Gamma 1$.  This map
becomes $\Gamma$-linear if we take the codomain to be $L^\uparrow$
with the adjoint action.  It is via this map that we view $L$ as a
subcomplex in $L^{\uparrow}$.  Second, note that for any $l\in
L\subset L^{\uparrow}$ we have $\rho(l)=1\ox l$.  Therefore, for all
$l\in L\subset L^{\uparrow}$,  $\theta(-\ox l)$ is just the map $x\mapsto (-1)^{|x||l|}\theta(x\ox l)$.

\begin{theorem}
Let $L$ be a projective resolution of the trivial right
$\Gamma$-module $\k$, and $K$ be a bimodule resolution of $A$
satisfying conditions (I) and (II).  Then for any complex $M$ of
$A\#\Gamma$-bimodules the map
\beq
\ba{c}
\Xi:\Hom_{(A\#\Gamma)^e}(K\# L^{\uparrow}, M)\to \Hom_\Gamma(L,\Hom_{A^e}(K,M))\\
\theta\mapsto (l\mapsto \theta(-\ox l))
\ea
\eeq
is a natural isomorphism of chain complexes.
\label{bigthm1}
\end{theorem}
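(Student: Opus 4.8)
The plan is to establish that $\Xi$ is a natural isomorphism by checking, in order, that it is well defined, that it is a chain map, that it is natural in $M$, and — the essential point — that it is bijective. The first three are direct manipulations, so let me indicate them and then concentrate on bijectivity, which is where the real content lies.

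For well-definedness I would fix $\theta\in\Hom_{(A\#\Gamma)^e}(K\#L^\uparrow,M)$ and verify that $\Xi(\theta)$ lands in the stated target. For $\ell$ in the image of the embedding $L\hookrightarrow L^\uparrow$ one has $\rho(\ell)=1\ox\ell$, so $\Xi(\theta)(\ell)$ is simply $x\mapsto(-1)^{|x||\ell|}\theta(x\ox\ell)$, and its $A$-bilinearity follows from the left and right $A$-linearity of $\theta$ together with the formulas $a\cdot(x\ox\ell)=(ax)\ox\ell$ and $(x\ox\ell)\cdot a=x(^{\ell_{-1}}a)\ox\ell_0=(xa)\ox\ell$ of Definition \ref{defOne}, the coaction disappearing precisely because $\ell\in L$. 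That $\ell\mapsto\Xi(\theta)(\ell)$ is $\Gamma$-linear is the one delicate check: it uses the left and right $\Gamma$-linearity of $\theta$, the fact that the embedding carries the right $\Gamma$-action on $L$ to the action $m\mapsto S(\gamma_1)\cdot m\cdot\gamma_2$ on $L^\uparrow$, and it is here that the antipode-twisted action of Definition \ref{notez} on $\Hom_{A^e}(K,M)$ is exactly what makes the two sides agree.

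To prove bijectivity I would write down an explicit inverse. The key structural observation is that $L^\uparrow=L\ox_\Gamma\Gamma^e$ is generated as a $\Gamma$-bimodule by the image of $L$, via $\ell\ox_\Gamma(\gamma\ox\gamma')=\gamma\cdot\bar\ell\cdot\gamma'$ for the outer bimodule structure, where $\bar\ell$ denotes the image of $\ell$. A standard antipode identity then rewrites $x\ox(\gamma\cdot\bar\ell)=\gamma_2\cdot(S^{-1}(\gamma_1)x\ox\bar\ell)$ inside $K\#L^\uparrow$ (using $\gamma_2S^{-1}(\gamma_1)\ox\gamma_3=1\ox\gamma$). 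Since $\Xi^{-1}(g)$ must be $(A\#\Gamma)^e$-linear and restrict to $x\ox\bar\ell\mapsto(-1)^{|x||\ell|}g(\ell)(x)$, this forces the formula
\beq
\theta\big(x\ox(\ell\ox_\Gamma(\gamma\ox\gamma'))\big):=(-1)^{|x||\ell|}\,\gamma_2\cdot g(\ell)\big(S^{-1}(\gamma_1)x\big)\cdot\gamma'.
\eeq
Granting that $\theta$ is well defined, it is routine to check it is $(A\#\Gamma)^e$-linear, and the two composites are identities: $\Xi(\theta)$ recovers $g$ immediately from the case $\gamma=\gamma'=1$, while conversely any $\theta$ is determined by its restriction to $K\ox\bar L$, since that subspace generates $K\#L^\uparrow$ as an $(A\#\Gamma)$-bimodule by the generation remark above.

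I expect the genuine obstacle to be the well-definedness of this inverse over $\ox_\Gamma$, i.e. that the displayed formula agrees on the two presentations $(\ell\cdot\delta,\ \gamma\ox\gamma')$ and $(\ell,\ \gamma S(\delta_1)\ox\delta_2\gamma')$ of a single element of $L\ox_\Gamma\Gamma^e$. This is the one place where the coaction on $L^\uparrow$, the twisted right $A$-action on the smash product, and the antipode-twisted $\Gamma$-action of Definition \ref{notez} on $\Hom_{A^e}(K,M)$ must all be reconciled at once; concretely one substitutes $g(\ell\cdot\delta)=g(\ell)\cdot\delta$, expands via Definition \ref{notez}, and uses $\Delta\circ S$ and the anti-multiplicativity of $S^{-1}$ to see that both sides collapse to $\gamma_2S(\delta_1)\,g(\ell)(\delta_2S^{-1}(\gamma_1)x)\,\delta_3\gamma'$. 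Once bijectivity is in hand, compatibility with differentials is sign-bookkeeping — the source differential $d_M\theta\pm\theta d_{K\#L^\uparrow}$ with $d_{K\#L^\uparrow}=d_K\ox1\pm1\ox d_{L^\uparrow}$ matches the total differential of the double hom-complex precisely because of the sign $(-1)^{|x||\ell|}$ built into $\theta(-\ox\ell)$ — and naturality in $M$ is immediate, since $\Xi$ is defined by postcomposition with the given map of $A\#\Gamma$-bimodule complexes.
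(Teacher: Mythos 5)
Your proof is correct and follows the same overall strategy as the paper: verify $A^e$-linearity of $\theta(-\ox\ell)$ and $\Gamma$-linearity of $\ell\mapsto\theta(-\ox\ell)$ using the triviality of the coaction on $L\subset L^\uparrow$ and the antipode-twisted action of Definition \ref{notez}, then construct an explicit inverse by exploiting the fact that $L$ generates $L^\uparrow$, with the content concentrated in the well-definedness of that inverse. The one genuine point of divergence is how that well-definedness is established: the paper reduces to the case where $L$ is free in each degree, writes $L^\uparrow=\Gamma\ox\bar L\ox\Gamma$, defines the inverse on monomials, and then treats a general projective $L$ as a summand of a free resolution; you instead write the inverse on all of $K\ox(L\ox_\Gamma\Gamma^e)$ and check directly that it respects the defining relation of $\ox_\Gamma$, the two presentations collapsing to $\gamma_2S(\delta_1)\,g(\ell)(\delta_2S^{-1}(\gamma_1)x)\,\delta_3\gamma'$ via anti-(co)multiplicativity of $S$ and $S^{-1}$ (I checked this identity; it holds). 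Your route is slightly cleaner in that it avoids the projective-summand reduction entirely and works uniformly in $L$, at the cost of one longer Sweedler computation; the paper's route trades that computation for a structural reduction. Both your inverse formula and the paper's $\Phi(\chi)(x\ox l\gamma)=(-1)^{|x||l|}\chi(l)(x)\gamma$ agree, yours being the extension of the paper's formula from the right-generating set $\bar L\cdot\Gamma$ to all of $L^\uparrow$.
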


In light of Theorem \ref{lol}, we are claiming that there is an
explicit natural isomorphism of derived functors
\beq
\RHom_{(A\#\Gamma)^e}(A\#\Gamma,-)\overset{\cong}\to \RHom_\Gamma(\k,\RHom_{A^e}(A,-)).
\eeq

\begin{proof}
To distinguish between the action of $\Gamma$ on $L$ as a subcomplex
in $L^{\uparrow}$, and the action of $\Gamma$ on $L$ itself, we will
denote the action of $\Gamma$ on $L^{\uparrow}$ by juxtaposition, and
the action of $\Gamma$ on $L$ by a dot $\cdot$.  So, for $l\in
L\subset L^{\uparrow}$ and $\gamma\in\Gamma$, we have
\beq
l\cdot\gamma=S(\gamma_1)l\gamma_2.
\eeq
It is straightforward to check that $\Xi$ is a map of chain complexes,
and we omit the computation.  We need to check that, for each
$\theta$, the map $\Xi(\theta)$ is a right $\Gamma$-linear, that each
$\theta(-\ox l)$ is $A^e$-linear, and that $\Xi$ is bijective.  
\par
Fix a homogeneous $A\#\Gamma$-bimodule map $\theta:K\# L^{\uparrow}\to
M$.  Since the coaction on $L^{\uparrow}$ restricts to a trivial
coaction on $L$, the map $\theta(-\ox l):K\to M$ is seen to be
$A^e$-linear for any $l\in L$.  Furthermore, for any $\gamma\in
\Gamma$, $l\in L$, and $x\in K$, $\Gamma$-linearity of $\theta$ on the
left and right gives the sequence of equalities
\beq
\ba{rl}
\theta(-\ox l\cdot \gamma)(x)&=(-1)^{|l||x|}\theta(x\ox l\cdot \gamma)\\
&= (-1)^{|l||x|}\theta (x\ox S(\gamma_1)l\gamma_2)\\
&=(-1)^{|l||x|}\theta((S(\gamma_2)\gamma_3 x)\ox S(\gamma_1)l\gamma_4)\\
&=(-1)^{|l||x|} S(\gamma_1)\theta((\gamma_2 x)\ox l)\gamma_3\\
&=(\theta(-\ox l)\cdot\gamma)(x).
\ea
\eeq
So we see that $\Xi(\theta)$ is in fact a right $\Gamma$-linear map $L\to \Hom_{A^e}(K,M)$. 
\par
To see that $\Xi$ is an isomorphism we provide an explicit inverse.
By a computation similar to (\ref{cmptn}), one can check that
$L^\uparrow$ is generated as a right $\Gamma$-complex by the
subcomplex $L\subset L^\uparrow$.  Using this fact, we define, for any
$\Gamma$-linear map
\beq
\chi: L\to \Hom_{A^e}(K,M),
\eeq
a graded vector space map $\Phi(\chi):K\# L^\uparrow\to M$.  For $l\in
L$, $\gamma\in \Gamma$, and $x\in K$ take
\beqlbl
\Phi(\chi)(x\ox l\gamma):= (-1)^{|x||l|}\chi(l)(x)\gamma.
\label{thetachi}
\eeqlbl
Let us assume for the moment that $\Phi(\chi)$ is well defined.  We
will return to this point at the end of the proof.
\par
The fact that $\Phi(\chi)$ is left $A$-linear and right
$\Gamma$-linear is clear.  Right $A$-linearity follows from right
$A$-linearity of $\chi(l)$ and the fact that coaction on $L^\uparrow$
restricts to a trivial coaction on $L$.  For left $\Gamma$-linearity,
let $x\in K$, $l\in L\subset L^\uparrow$, and $\gamma\in \Gamma$.  We have
\beq
\ba{rlr}
\Phi(\chi)(\gamma(x\ox l))& =\Phi(\chi)(\gamma_1x\ox \gamma_2l)\\
&=\Phi(\chi)\left(\gamma_1x\ox (l \cdot S^{-1}(\gamma_3))\gamma_2\right)\\
&=(-1)^{|x||l|}\chi(l \cdot S^{-1}(\gamma_3))(\gamma_1x)\gamma_2\\
&=(-1)^{|x||l|}\gamma_5\chi(l)(S^{-1}(\gamma_4)\gamma_1x)S^{-1}(\gamma_3)\gamma_2 &\text{($\Gamma$-linearity of $\chi$)}\\
&=(-1)^{|x||l|}\gamma_3\chi(l)(S^{-1}(\gamma_2)\gamma_1x)\\
&=(-1)^{|x||l|}\gamma\chi(l)(x)\\
&=\gamma\Phi(\chi)(x\ox l).
\ea
\eeq
We can use right $\Gamma$-linearity of $\Phi(\chi)$ to extend the
above computation to all of $K\# L^\uparrow=K\ox L\Gamma$.  Whence we
see that $\Phi(\chi)$ is a $A\#\Gamma$-bimodule map for arbitrary
$\chi:L\to \Hom_{A^e}(K,M)$.  The equalities
$\Phi(\Xi(\theta))=\theta$ and $\Xi(\Phi(\chi))=\chi$ follow by
construction.  So $\Phi=\Xi^{-1}$ and $\Xi=\Phi^{-1}$.
\par
Now, let us deal with the question of whether or not $\Phi(\chi)$ is
well defined.  In the case that $L$ is free on a subspace $\bar{L}$,
we will have $L^\uparrow=\Gamma\ox \bar{L}\ox \Gamma$.  We can then
define the $\k$-linear map $K\# L^{\uparrow}=K\ox \Gamma\ox \bar{L}\ox
\Gamma\to M$ on monomials by
\beq
x\ox\gamma\ox \bar{l}\ox \gamma'\mapsto x\ox \bar{l}\ox S^{-1}(\gamma_2)\ox \gamma_1\ox \gamma'\mapsto(-1)^{|x||\bar{l}|}\chi(\bar{l}\cdot S^{-1}(\gamma_2))(x)\gamma_1\gamma'.
\eeq
In the case that $\gamma\ox \gamma'=S(\gamma_1)\ox \gamma_2$, i.e. in
the case that $\gamma\ox \bar{l}\ox \gamma'$ is in $L\subset
L^{\uparrow}$, the image of this map is
$(-1)^{|x||\bar{l}|}\chi(\bar{l}\cdot \gamma)(x)$.  So we see that we
have recovered $\Phi(\chi)$ as defined at (\ref{thetachi}), and it
follows that $\Phi(\chi)$ is well defined.  We can deal with the
general case, in which $L$ is simply projective in each degree, by
noting that $L$ will be a summand of a free resolution.
\end{proof}

\begin{corollary}
Let $L$ and $K$ be as in Theorem \ref{bigthm1}, and $M$ be a
$A\#\Gamma$-bimodule.  Then we have a graded isomorphism
\beq
\mathrm{HH}(A\#\Gamma,M)\cong \mathrm{H}\big(\Hom_\Gamma(L,\Hom_{A^e}(K,M))\big).
\eeq
\end{corollary}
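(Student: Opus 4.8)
The plan is to combine Theorem \ref{lol} with Theorem \ref{bigthm1} and then pass to homology. First I would recall that, by definition, the Hochschild cohomology is the graded Ext group $\mathrm{HH}(A\#\Gamma,M)=\Ext_{(A\#\Gamma)^e}(A\#\Gamma,M)$, which may be computed as the homology of $\Hom_{(A\#\Gamma)^e}(P,M)$ for any projective $(A\#\Gamma)^e$-resolution $P\to A\#\Gamma$. Viewing the single bimodule $M$ as a complex concentrated in degree $0$, Theorem \ref{lol} tells us that the smash product complex $K\# L^{\uparrow}$ is precisely such a projective resolution. Hence $\mathrm{HH}(A\#\Gamma,M)\cong \mathrm{H}\big(\Hom_{(A\#\Gamma)^e}(K\# L^{\uparrow},M)\big)$ as graded vector spaces.

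Next I would invoke the natural isomorphism of chain complexes $\Xi$ furnished by Theorem \ref{bigthm1}, which identifies $\Hom_{(A\#\Gamma)^e}(K\# L^{\uparrow},M)$ with $\Hom_\Gamma(L,\Hom_{A^e}(K,M))$. Since $\Xi$ is an isomorphism of complexes, and in particular a degree-$0$ chain map, it commutes with the formation of homology; taking $\mathrm{H}(-)$ of both sides then yields the desired isomorphism $\mathrm{HH}(A\#\Gamma,M)\cong \mathrm{H}\big(\Hom_\Gamma(L,\Hom_{A^e}(K,M))\big)$.

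The only point demanding a moment of care, and the closest thing to an obstacle, is the bookkeeping of the grading: one must confirm that the cohomological degree in which $\Ext^n$ sits on the Hochschild side matches the total degree of the Hom complex, equivalently the total degree of the double complex $\Hom_\Gamma(L,\Hom_{A^e}(K,M))$, under $\Xi$. This is automatic once we note that $\Xi$ preserves the total degree, as asserted in Theorem \ref{bigthm1}, so that the resulting isomorphism is one of graded vector spaces with no shift. I do not expect any genuine difficulty here; the corollary is essentially the statement that one takes $\mathrm{H}(-)$ of the isomorphism in Theorem \ref{bigthm1}, using Theorem \ref{lol} to interpret the left-hand complex as computing Hochschild cohomology.
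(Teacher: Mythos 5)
Your proposal is correct and follows exactly the paper's own argument: cite Theorem \ref{lol} to recognize $K\# L^{\uparrow}$ as a projective $(A\#\Gamma)^e$-resolution computing $\mathrm{HH}(A\#\Gamma,M)$, then apply the chain-complex isomorphism $\Xi$ of Theorem \ref{bigthm1} and pass to homology. The extra remark on degree bookkeeping is harmless but not needed beyond what Theorem \ref{bigthm1} already guarantees.
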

\begin{proof}
This follows from Theorem \ref{bigthm1} and the fact that the
Hochschild cohomology is given by the homology of the complex
$\Hom_{(A\#\Gamma)^e}(K\# L^{\uparrow}, M)$, since $K\# L^\uparrow$ is
a projective bimodule resolution of the smash product $A\#\Gamma$.
\end{proof}

We can, in fact, replace our resolution $K$ with any equivariant
$A^e$-projective resolution of $A$.  Let $P\to A$ be any
$\Gamma$-equivariant $A$-bimodule resolution of $A$ which is
projective over $A^e$.  By a straightforward process, we can produce
an equivariant complex $Q$ admitting equivariant quasi-isomorphisms
$P\to Q$ and $K\to Q$.  First, take $d^0$ to be the coproduct map
$K^0\oplus P^0\to A$.  Then we construct $Q$ inductively as the complex
\beq
Q=\cdots\to  K^2\oplus (A\ox\ker d^1\ox A)\oplus P^2\overset{d^2}\to K^1\oplus (A\ox\ker d^0\ox A)\oplus P^1\overset{d^1}\to K^0\oplus P^0\to 0,
\eeq
where $\Gamma$ acts diagonally on the summands $(A\ox\ker d^i\ox A)$.
Note that $Q$ is a complex of $\Gamma$-equivariant bimodules, and that
each $Q^i$ is projective over $A^e$.  The map $d^0:Q\to A$ is a
quasi-isomorphism by construction.  Whence we see that the two
inclusions $i_K:K\to Q$ and $i_P:P\to Q$ are equivariant
quasi-isomorphisms.  Taking
\beq
\mathscr{X}=\Hom_\Gamma(L,\Hom_{A^e}(Q,M))
\eeq
then gives the following corollary.

\begin{corollary}
Let $L$ and $K$ be as in Theorem \ref{bigthm1}, and $M$ be a complex
of $A\#\Gamma$-bimodules.  Let $P\to A$ be an equivariant bimodule
resolution of $A$ which is projective over $A^e$.  The complex
$\mathscr{X}$ admits quasi-isomorphisms 
\beq
\Hom_{(A\#\Gamma)^e}(K\# L^{\uparrow}, M)\overset{\sim}\leftarrow \mathscr{X}\overset{\sim}\to\Hom_\Gamma(L,\Hom_{A^e}(P,M)).
\eeq
\label{bigcor1}
\end{corollary}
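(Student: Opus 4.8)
The plan is to produce both quasi-isomorphisms by applying the functor $\Hom_\Gamma(L,\Hom_{A^e}(-,M))$ to the two equivariant quasi-isomorphisms $i_K\colon K\to Q$ and $i_P\colon P\to Q$ constructed just above, and then to identify the $K$-side of the resulting diagram with $\Hom_{(A\#\Gamma)^e}(K\# L^\uparrow,M)$ by means of the natural isomorphism $\Xi$ of Theorem \ref{bigthm1}. Thus the corollary is really a formal consequence of that theorem together with the two comparison maps $i_K$ and $i_P$.

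First I would record that precomposition with an equivariant map is $\Gamma$-linear for the action of Definition \ref{notez}. Indeed, if $i\colon X\to X'$ is a morphism of $\Gamma$-equivariant $A$-bimodules, then for the induced map $i^\ast\colon\Hom_{A^e}(X',M)\to\Hom_{A^e}(X,M)$, $f\mapsto f\circ i$, one has $(f\circ i)\cdot\gamma(x)=S(\gamma_1)f(i(\gamma_2 x))\gamma_3=S(\gamma_1)f(\gamma_2 i(x))\gamma_3=\big((f\cdot\gamma)\circ i\big)(x)$, using equivariance of $i$, so that $i^\ast$ commutes with the $\Gamma$-action. Hence $i_K^\ast$ and $i_P^\ast$ are maps of $\Gamma$-complexes, and applying $\Hom_\Gamma(L,-)$ gives the two maps out of $\mathscr{X}=\Hom_\Gamma(L,\Hom_{A^e}(Q,M))$ that we wish to show are quasi-isomorphisms.

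Next I would check that $i_K^\ast$ and $i_P^\ast$ are themselves quasi-isomorphisms. The complexes $K$, $P$, and $Q$ are bounded-below complexes of projective $A^e$-modules, each resolving $A$; therefore $i_K$ and $i_P$, being quasi-isomorphisms between bounded-below complexes of projectives, are homotopy equivalences over $A^e$ (their cones are acyclic bounded-below complexes of projectives, hence contractible). Since $\Hom_{A^e}(-,M)$ is additive it carries these homotopy equivalences to homotopy equivalences, so $i_K^\ast$ and $i_P^\ast$ are quasi-isomorphisms of $\Gamma$-complexes. Note this argument uses only the $A^e$-linear homotopy data and so imposes no finiteness hypothesis on the complex $M$.

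The crux is the final step: passing from a quasi-isomorphism of $\Gamma$-complexes to a quasi-isomorphism after applying $\Hom_\Gamma(L,-)$. Here one cannot simply invoke preservation of homotopy equivalences, because the $A^e$-linear homotopy inverses and homotopies produced above need not be $\Gamma$-equivariant; this is exactly where the hypotheses on $L$ enter. Since $L$ is a bounded-below complex of projective $\Gamma$-modules it is K-projective, and hence $\Hom_\Gamma(L,-)$ sends quasi-isomorphisms of $\Gamma$-complexes to quasi-isomorphisms (apply $\Hom_\Gamma(L,-)$ to the cone of $i_K^\ast$, which is acyclic, and use that $\Hom_\Gamma(L,-)$ commutes with cones). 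Concretely, one may filter by the degree in $L$: each $L^j$ is projective, so the associated graded of the map is $\Hom_\Gamma(L^j,-)$ applied to a quasi-isomorphism, the one-sided boundedness of $L$ guaranteeing that the comparison converges even for unbounded $M$. Applying this to $i_K^\ast$ and $i_P^\ast$ yields quasi-isomorphisms $\mathscr{X}\overset{\sim}\to\Hom_\Gamma(L,\Hom_{A^e}(K,M))$ and $\mathscr{X}\overset{\sim}\to\Hom_\Gamma(L,\Hom_{A^e}(P,M))$. Composing the first with the inverse of the natural isomorphism $\Xi$ of Theorem \ref{bigthm1} produces the asserted quasi-isomorphism $\mathscr{X}\overset{\sim}\to\Hom_{(A\#\Gamma)^e}(K\# L^\uparrow,M)$, completing the chain. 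The main obstacle is precisely this last passage through $\Hom_\Gamma(L,-)$, and it is resolved by the K-projectivity of $L$ rather than by any homotopy-equivalence argument.
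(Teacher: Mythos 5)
Your proposal is correct and follows essentially the same route as the paper: both produce the two maps by applying $\Hom_\Gamma(L,\Hom_{A^e}(-,M))$ to the equivariant comparison maps $i_K:K\to Q$ and $i_P:P\to Q$, identify the $K$-side via $\Xi$, and justify the final step by the fact that $L$, being a one-sided bounded complex of projectives, makes $\Hom_\Gamma(L,-)$ preserve quasi-isomorphisms. You merely make explicit two points the paper leaves implicit --- the $\Gamma$-linearity of $i^\ast$ for equivariant $i$, and the homotopy-equivalence argument showing $i_K^\ast$ and $i_P^\ast$ are quasi-isomorphisms of $\Gamma$-complexes --- both of which are correct.
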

\begin{proof}
Since $L$ is a bounded above complex of projectives, the functor
$\Hom_\Gamma(L,-)$ preserves quasi-isomorphisms.  Whence the proposed
quasi-isomorphisms can be given by
\beq
\Hom_\Gamma(L,\Hom_{A^e}(Q,M))\overset{(i_K^\ast)_\ast}\to \Hom_\Gamma(L,\Hom_{A^e}(K,M))\cong \Hom_{(A\#\Gamma)^e}(K\# L^{\uparrow}, M)
\eeq
and
\beq
\Hom_\Gamma(L,\Hom_{A^e}(Q,M))\overset{(i_P^\ast)_\ast}\to \Hom_\Gamma(L,\Hom_{A^e}(P,M)).
\eeq
\end{proof}

For $L$ and $K$ as above, and any $A\#\Gamma$-bimodule $M$, the
complex $\Hom_{\Gamma}(L,\Hom_{A^e}(K,M))$ is the total complex of the
first quadrant double complex
\beqlbl
{\tiny
\xymatrix{
 &\vdots & \vdots &\\
0\ar[r]& \Hom(L^{0},\Hom(K^{-1},M))\ar[r]^{{d_{L}}^\ast}\ar[u] & \Hom(L^{-1},\Hom(K^{-1},M))\ar[r]\ar[u] &\cdots\\
0\ar[r]& \Hom(L^{0},\Hom(K^{0},M))\ar[r]^{{d_{L}}^\ast}\ar[u]^{\pm(d_K^{\ast})_\ast} & \Hom(L^{-1},\Hom(K^{0},M))\ar[r]\ar[u]^{\pm(d_K^{\ast})_\ast} &\cdots\\
 &0\ar[u] & 0\ar[u] & . \\
}}
\label{bicmplx}
\eeqlbl
It follows that there are two spectral sequences converging to the
Hochschild cohomology of $A\#\Gamma$ with coefficients in $M$.
Filtering by the degree on $L$ produces a spectral sequence
\beq
E_2=\Ext_\Gamma(\k,\mathrm{HH}(A\#\Gamma, M)).
\eeq
\par
The existence of this spectral sequence is well known.  It first
appeared in the work of Stefan as a Grothendieck spectral sequence in
the setting of a Hopf Galois extension \cite{stefan}, and then in a
paper by Guccione and Guccione \cite[Corollary 3.2.3]{GuGu}.  Since
these results are well established, we do not elaborate on the details
here.  We will show in Section \ref{DIA} that both of these spectral
sequences can be used to calculate the cup product on Hochschild
cohomology when appropriate.  All necessary details will be given there.

\begin{notation}
The filtration induced by the degree of $L$ on the cohomology 
\beq
\mathrm{HH}(A\#\Gamma,M)=\mathrm{H}(\Hom_{\Gamma}(L,\Hom_{A^e}(K,M)))
\eeq
will be denoted $F^\Gamma$.  The filtration induced by the degree on
$K$ will be denoted $F^{A}$.  The associated graded spaces with
respect to these filtrations will be denoted
\beq
\mathrm{gr}_\Gamma \mathrm{HH}(A\#\Gamma,M)=\bigoplus_i \frac{F^\Gamma_i \big(\mathrm{HH}(A\#\Gamma,M)\big)}{F^\Gamma_{i-1}\big(\mathrm{HH}(A\#\Gamma,M)\big)}
\eeq
and
\beq
\mathrm{gr}_A \mathrm{HH}(A\#\Gamma,M)=\bigoplus_i \frac{F^A_i \big(\mathrm{HH}(A\#\Gamma,M)\big)}{F^A_{i-1}\big(\mathrm{HH}(A\#\Gamma,M)\big)}
\eeq
respectively.
\label{filtnotes}
\end{notation}

\section{Reminder of the cup products on Hochschild cohomology and derived invariant algebras}
\label{rmind}

The following general approach to the cup product on Hochschild
cohomology follows \cite{SiW}.  Let $R$ be any algebra and let $B$ be
an algebra extension of $R$, i.e. an algebra equipped with an algebra
map $R\to B$.  Let $P$ be a projective $R$-bimodule resolution of $R$
with quasi-isomorphism $\varphi:P\to R$.  Then $P\ox_RP$ is also a
projective resolution of $R$ with quasi-isomorphism
$\varphi\ox_R\varphi:P\ox_RP\to R$.  Whence there exists a
quasi-isomorphism $\omega:P\to P\ox_RP$ which fits into a diagram
\beqlbl
\xym{
P\ar[rr]^\omega\ar[dr]_\varphi & & P\ox_R P\ar[dl]^{\varphi\ox_R\varphi}\\
& R &
}
\label{dg}
\eeqlbl
and is unique up to homotopy.  From this we get a product map
\beq
\ba{c}
\Hom_{R^e}(P,B)\ox \Hom_{R^e}(P,B)\to \Hom_{R^e}(P,B)\\
f\ox g\mapsto \mu_B(f\ox_R g)\omega,
\ea
\eeq
and subsequent dg algebra structure on $\Hom_{R^e}(P,B)$.  One can
check that any choice of $\omega$ results in the same product on the
cohomology $\mathrm{HH}(R,B)$.  We call this product the \emph{cup
  product}.  Note that the dg algebra $\Hom_{R^e}(P,B)$ need not be
associative, but it will be associative up to a homotopy.
\par
Suppose now that $\Gamma$ is a Hopf algebra and $L$ is a projective
resolution of $\k_\Gamma=\Gamma/\ker\epsilon$.  Let $\mathscr{B}$ be a
right $\Gamma$-module dg algebra.  (We do not require that
$\mathscr{B}$ is strictly associative.)  Since $\Gamma\ox\Gamma$ is
free over $\Gamma$, the diagonal action on $L\ox L$ makes it into a
projective resolution of $\k$ as well.  So, again, we have a
quasi-isomorphism $\sigma:L\to L\ox L$ which is unique up to homotopy
and fits into a diagram analogous to (\ref{dg}).  Hence, we get a
similarly defined product on the derived invariants
\beq
\ba{c}
\Hom_{\Gamma}(L,\mathscr{B})\ox \Hom_{\Gamma}(L,\mathscr{B})\to \Hom_{\Gamma}(L,\mathscr{B})\\
f\ox g\mapsto \mu_{\mathscr{B}}(f\ox g)\sigma.
\ea
\eeq
This product is unique on cohomology and gives
$\Hom_{\Gamma}(L,\mathscr{B})$ the structure of a
(not-necessarily-associative) dg algebra.

\section{Hochschild cohomology as a derived invariant algebra}
\label{DIA}

Let $\Gamma$ be a Hopf algebra and $A$ be a $\Gamma$-module algebra.
We also fix a bimodule resolution $\tau:K\to A$ which satisfies
conditions (I) and (II) of Section \ref{bimodrez}, and a projective
resolution $\xi:L\to \k$ of the trivial right $\Gamma$-module.  From
here on out we assume $K$ also satisfies
\begin{enumerate}
\item[(III)] there is a quasi-isomorphism $\omega:K\to K\ox_AK$ of
  complexes of $\Gamma$-equivariant $A$-bimodules.
\end{enumerate}
As was stated in the previous section, there will always be some
quasi-isomorphism $\omega$ of $A^e$-complexes.  The content of
condition (III) is that we may choose $\omega$ to be $\Gamma$-linear.  
\par
In the case of the bar resolution
\beq
BA=\cdots\to A\ox A^{\ox 2}\ox A\to A\ox A\ox A\to A\ox A\to 0
\eeq
the map $\omega$ is given by
\beqlbl
\omega: b\ox a_1\ox\dots\ox a_n\ox b'\mapsto \sum_{0\leq i\leq n} (b\ox a_1\ox\dots \ox a_i\ox 1)\ox_A(1\ox a_{i+1}\ox\dots\ox a_n\ox b')
\label{comultbar}
\eeqlbl
We will denote the image of $\omega$ using a Sweedler's type notation,
as if $\omega$ were a comultiplication.  Specifically, on elements we
take $\omega_1(x)\ox_A \omega_2(x)=\omega(x)$, with the sum
suppressed.  In this notation, $\Gamma$-linearity of $\omega$ is
equivalent to the equality $\omega_1(\gamma x)\ox_A \omega_2(\gamma
x)=\gamma_1\omega_1(x)\ox_A\gamma_2\omega_2(x)$, for all $\gamma\in
\Gamma$ and $x\in K$.
\par
Let us also fix a quasi-isomorphism $\sigma:L\to L\ox L$.  As with
$\omega$ and $K$, we denote the image of $l\in L$ under $\sigma$ by
$\sigma_1(l)\ox\sigma_2(l)$.  In this notation $\Gamma$-linearity
appears as $\sigma_1(l\cdot \gamma)\ox\sigma_2(l\cdot \gamma)=\sigma_1(l)\cdot\gamma_1\ox\sigma_2(l)\cdot\gamma_2$.

\begin{proposition}
For any algebra extension $B$ of $A\#\Gamma$, the complex
$\Hom_{A^e}(K,B)$, with the product of Section \ref{rmind} and
$\Gamma$-action of Definition \ref{notez}, is a right $\Gamma$-module
dg algebra.
\end{proposition}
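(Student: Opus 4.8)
The plan is to note that two of the three ingredients are already established, and to isolate the one genuinely new identity. Composing $A\hookrightarrow A\#\Gamma\to B$ makes $B$ an algebra extension of $A$, so the construction of Section \ref{rmind} (with $R=A$ and $P=K$) equips $\Hom_{A^e}(K,B)$ with the cup product $f\cup g=\mu_B(f\ox_A g)\omega$, where $\omega$ is the $\Gamma$-equivariant comultiplication supplied by condition (III). Lemma \ref{lemprem} already shows that the $\Gamma$-action of Definition \ref{notez} is compatible with the hom-complex differential, so $\Hom_{A^e}(K,B)$ is a right $\Gamma$-complex. Hence the only remaining point is that the cup product is a morphism of right $\Gamma$-modules for the diagonal action on the tensor square; concretely, one must check
\beq
(f\cup g)\cdot\gamma=(f\cdot\gamma_1)\cup(g\cdot\gamma_2)
\eeq
(Sweedler sum implied) for all $f,g\in\Hom_{A^e}(K,B)$ and $\gamma\in\Gamma$.

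I would verify this identity by evaluating both sides on $x\in K$. For the left side, unwinding Definition \ref{notez} and then the cup product gives
\beq
((f\cup g)\cdot\gamma)(x)=S(\gamma_1)\,f(\omega_1(\gamma_2 x))\,g(\omega_2(\gamma_2 x))\,\gamma_3,
\eeq
and applying condition (III) in the form $\omega(\gamma x)=\gamma_1\omega_1(x)\ox_A\gamma_2\omega_2(x)$ to pull the action through $\omega$, followed by coassociativity, yields
\beq
((f\cup g)\cdot\gamma)(x)=S(\gamma_1)\,f(\gamma_2\,\omega_1(x))\,g(\gamma_3\,\omega_2(x))\,\gamma_4.
\eeq
For the right side, substituting the definitions of $f\cdot\gamma_1$, $g\cdot\gamma_2$, and the cup product (all products taken in $B$ via $A\#\Gamma\to B$) produces
\beq
((f\cdot\gamma_1)\cup(g\cdot\gamma_2))(x)=S(\gamma_1)\,f(\gamma_2\,\omega_1(x))\,\gamma_3\,S(\gamma_4)\,g(\gamma_5\,\omega_2(x))\,\gamma_6.
\eeq

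The crux is then a single application of the antipode axiom: the interior factor $\gamma_3 S(\gamma_4)$ equals $\epsilon(\,\cdot\,)1$, collapsing the two middle coproduct slots and reducing the right side to exactly the expression obtained for the left side. The only bookkeeping I expect to need care with is the Koszul sign $(-1)^{|g|\,|\omega_1(x)|}$ implicit in $\mu_B(f\ox_A g)\omega$; it is the same on both sides because $\omega$ preserves homological degree and the action of Definition \ref{notez} is degree-preserving, so $\gamma_2\omega_1(x)$ has the same degree as $\omega_1(x)$ and $|f\cdot\gamma_1|=|f|$, $|g\cdot\gamma_2|=|g|$. This establishes the module-algebra identity.

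Combining this identity with the two facts recalled at the outset—that $\Hom_{A^e}(K,B)$ is a dg algebra under the cup product and a right $\Gamma$-complex under the action of Definition \ref{notez}—shows that $\Hom_{A^e}(K,B)$ is a right $\Gamma$-module dg algebra. As in Section \ref{rmind}, associativity is only required up to homotopy, so no strict associativity or unitality check is needed. The main obstacle is essentially notational, namely aligning the iterated coproducts so that the antipode collapse lands on the correct factors; the sole structural input is the $\Gamma$-linearity of $\omega$ furnished by condition (III).
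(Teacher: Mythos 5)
Your proposal is correct and follows essentially the same route as the paper: the paper's proof likewise reduces everything to the module-algebra identity $(fg)\cdot\gamma=(f\cdot\gamma_1)(g\cdot\gamma_2)$, verifies it on elements using the $\Gamma$-linearity of $\omega$ from condition (III), and resolves the middle coproduct factors via the antipode axiom (the paper inserts $\gamma_3 S(\gamma_4)$ going left to right where you collapse it going right to left, which is the same computation). Your explicit bookkeeping of the Koszul sign and of the already-established differential compatibility is a slightly fuller framing of what the paper leaves implicit with a $\pm$.
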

\begin{proof}
Let us denote the multiplication on $\Hom_{A^e}(K,B)$ by
juxtaposition.  We need to show that for functions $f,g\in
\Hom_{A^e}(K,M)$, and $\gamma\in \Gamma$, the formula
$(fg)\cdot\gamma=(f\cdot\gamma_1)(g\cdot\gamma_2)$ holds.  Let us
simply check on elements.  We have, for any $x\in K$,
\beq
\ba{rlr}
((fg)\cdot\gamma)(x)&= S(\gamma_1)(fg)(\gamma_2 x)\gamma_3\\
&=\pm S(\gamma_1)f(\gamma_2\omega_1(x))g(\gamma_3\omega_2(x))\gamma_4 & (\text{by }\Gamma\text{-linearity of }\omega)\\
&=\pm\big(S(\gamma_1)f(\gamma_2\omega_1(x))\gamma_3\big)\big(S(\gamma_4)g(\gamma_5\omega_2(x))\gamma_6\big)\\
&=(f\cdot \gamma_1)(\omega_1(x))(g\cdot \gamma_2)(\omega_2(x))\\
&=\big((f\cdot\gamma_1)(g\cdot\gamma_2)\big)(x).
\ea
\eeq
\end{proof}

According to this proposition, and the material of Section
\ref{rmind}, the double complex $\Hom_\Gamma(L,\Hom_{A^e}(K,B))$ will
now cary a natural dg algebra structure.
\par
We now seek to extend the diagonal map $\sigma$ on $L$ to a diagonal
map on the induced complex $L^\uparrow$.  One can verify that the
obvious map $L\ox L\to L^{\uparrow}\ox_\Gamma L^{\uparrow}$ is an
embedding, since the statement holds when $L$ is free.  In this way we
view $L\ox L$ as a subcomplex of $L^{\uparrow}\ox_\Gamma
L^{\uparrow}$.  The complex $L^\uparrow\ox_\Gamma L^\uparrow$ is taken
to be a $\Gamma$-comodule under the standard tensor $\Gamma$-comodule
structure $l\ox_\Gamma l'\mapsto (l_{-1}l'_{-1})\ox(l_0\ox_\Gamma
l'_0)$.  Since $L^\uparrow$ is itself a Hopf bimodule over $\Gamma$,
this coaction gives $L^{\uparrow}\ox_\Gamma L^{\uparrow}$ the
structure of a Hopf bimodule as well.  Before giving the next result
we also note that, on elements, commutativity of the diagram
\beq
\xym{
L\ar[rr]^{\sigma}\ar[dr]_\xi & & L\ox L\ar[dl]^{\xi\ox\xi}\\
 & \k &
}
\eeq
produces the equality $\xi(l)=\xi(\sigma_1(l))\xi(\sigma_2(l))$ for
each $l\in L$.

\begin{lemma}
The map $\sigma:L\to L\ox L\subset L^{\uparrow}\ox_\Gamma
L^{\uparrow}$ extends uniquely to a quasi-isomorphism of chain
complexes of Hopf-bimodules $\sigma^{\uparrow}:L^{\uparrow}\to L^{\uparrow}\ox_\Gamma L^{\uparrow}$.
\label{UP}
\end{lemma}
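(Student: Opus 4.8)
The plan is to obtain $\sigma^\uparrow$ as the image of $\sigma$ under the functor $(-)^\uparrow$, reinterpreted through a monoidal comparison isomorphism, and then to read off each claimed property from results already in hand. The first and central step is to produce a natural isomorphism of complexes of Hopf bimodules
\beq
\kappa:(M\ox N)^\uparrow\overset{\cong}\to M^\uparrow\ox_\Gamma N^\uparrow,\quad (m\ox n)\ox_\Gamma(\gamma\ox\gamma')\mapsto\big(m\ox_\Gamma(\gamma\ox 1)\big)\ox_\Gamma\big(n\ox_\Gamma(1\ox\gamma')\big),
\eeq
defined for any right $\Gamma$-modules (or complexes) $M,N$, where $M\ox N$ carries the diagonal right action $(m\ox n)\cdot\gamma=m\gamma_1\ox n\gamma_2$. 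Granting $\kappa$, I set $\sigma^\uparrow:=\kappa\circ(\sigma\ox_\Gamma\Gamma^e)$. Because $\sigma$ is right $\Gamma$-linear (the comparison map between the projective resolutions $L$ and $L\ox L$ of $\k$ over $\Gamma$ can be so chosen), the functoriality of $(-)^\uparrow$ into Hopf bimodules (the corollary to the proposition on $(-)^\uparrow$) shows that $\sigma\ox_\Gamma\Gamma^e:L^\uparrow\to(L\ox L)^\uparrow$ is a map of complexes of Hopf bimodules, and post-composing with the Hopf-bimodule isomorphism $\kappa$ keeps it one.

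To see that $\sigma^\uparrow$ extends $\sigma$, I evaluate on $l=l\ox_\Gamma(1\ox1)\in L\subset L^\uparrow$: here $(\sigma\ox_\Gamma\Gamma^e)(l\ox_\Gamma(1\ox1))=\sigma(l)\ox_\Gamma(1\ox1)$, and applying $\kappa$ summandwise (with $\gamma=\gamma'=1$) returns $\sum(\sigma_1(l)\ox_\Gamma(1\ox1))\ox_\Gamma(\sigma_2(l)\ox_\Gamma(1\ox1))$, which is precisely the image of $\sigma(l)$ under the embedding $L\ox L\subset L^\uparrow\ox_\Gamma L^\uparrow$. For the quasi-isomorphism claim, $\sigma$ is a quasi-isomorphism of complexes of right $\Gamma$-modules, and tensoring over $\Gamma$ with the flat module $\Gamma^e$ (Lemma \ref{freelem}) preserves quasi-isomorphisms; hence $\sigma\ox_\Gamma\Gamma^e$ is a quasi-isomorphism, and since $\kappa$ is an isomorphism, $\sigma^\uparrow$ is a quasi-isomorphism as well.

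Uniqueness is then immediate from the generation statement established in the proof of Theorem \ref{bigthm1}: $L^\uparrow$ is generated as a right $\Gamma$-complex by the subcomplex $L$. Any morphism of Hopf bimodules is in particular right $\Gamma$-linear, so a Hopf-bimodule map out of $L^\uparrow$ is determined by its restriction to $L$; thus any two Hopf-bimodule maps extending $\sigma$ coincide.

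The main work, and the only real obstacle, is verifying that $\kappa$ is a well-defined isomorphism of Hopf bimodules. Left and right $\Gamma$-linearity are transparent from the formula, since on both sides the left action touches only $\gamma$ and the right action only $\gamma'$. Compatibility with the coactions of Definition \ref{codef} is a short Sweedler computation: both $\rho\circ\kappa$ and $(\mathrm{id}\ox\kappa)\circ\rho$ send $(m\ox n)\ox_\Gamma(\gamma\ox\gamma')$ to $(\gamma_1\gamma'_1)\ox\big[(m\ox_\Gamma(\gamma_2\ox1))\ox_\Gamma(n\ox_\Gamma(1\ox\gamma'_2))\big]$. The delicate point is that $\kappa$ respects the defining relations of $(M\ox N)\ox_\Gamma\Gamma^e$: checking that $\kappa$ agrees on $(m\ox n)\cdot\delta\ox_\Gamma(\gamma\ox\gamma')$ and on $(m\ox n)\ox_\Gamma\Delta^{tw}(\delta)(\gamma\ox\gamma')$ requires pushing the $\delta$-legs across the middle $\ox_\Gamma$ and collapsing a consecutive pair via the antipode axiom $\delta_2 S(\delta_3)=\epsilon(\delta_2)1$. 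Bijectivity I would obtain either by writing the inverse on normalized elements, or, more cleanly, by reducing along the exactness of both functors to the free case $M=N=\Gamma$ and invoking the fundamental theorem of Hopf modules, exactly as in Lemma \ref{freelem}.
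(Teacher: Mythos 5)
Your proposal is correct, but it takes a genuinely different route from the paper. The paper defines $\sigma^{\uparrow}$ directly by the formula $\gamma l\gamma'\mapsto \gamma\sigma_1(l)\ox_\Gamma\sigma_2(l)\gamma'$ and verifies well-definedness by hand, checking that an auxiliary $\Gamma^e$-linear map $\Sigma:\Gamma\ox L\ox\Gamma\to L^\uparrow\ox_\Gamma L^\uparrow$ kills the relations defining $L\ox_\Gamma\Gamma^e$ (using $\Gamma$-linearity of $\sigma$ and the antipode axiom); it then deduces the quasi-isomorphism property by two-out-of-three in the triangle over $\Gamma$ formed with $\xi^\uparrow$ and $\xi^\uparrow\ox_\Gamma\xi^\uparrow$, and checks colinearity by a direct Sweedler computation. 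You instead factor the construction as $\kappa\circ(\sigma\ox_\Gamma\Gamma^e)$ through a monoidal comparison isomorphism $\kappa:(M\ox N)^\uparrow\cong M^\uparrow\ox_\Gamma N^\uparrow$, so that colinearity and bimodule-linearity come for free from the functoriality of $(-)^\uparrow$ into Hopf bimodules, and the quasi-isomorphism property follows from flatness of $\Gamma^e$ over $\Gamma$ (Lemma \ref{freelem}) rather than from the triangle over $\Gamma$ — which is arguably cleaner, since the paper's route tacitly requires knowing that $\xi^\uparrow\ox_\Gamma\xi^\uparrow$ is a quasi-isomorphism. The same antipode-collapse computation that the paper performs for well-definedness of $\sigma^\uparrow$ reappears in your argument as well-definedness of $\kappa$, so no work is truly avoided there; and your approach carries one extra obligation the paper's does not, namely \emph{bijectivity} of $\kappa$ (the paper only ever needs a well-defined map \emph{out of} $L^\uparrow$, never an inverse). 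You correctly flag this and your two proposed routes (normalized elements, or reduction to the free case via the fundamental theorem of Hopf modules as in Lemma \ref{freelem}) both work. In exchange, $\kappa$ is a reusable structural fact that would also illuminate Lemma \ref{diglem}. Your explicit treatment of uniqueness — via generation of $L^\uparrow$ by $L$ as a right $\Gamma$-complex, so that any Hopf-bimodule map is determined by its restriction to $L$ — is correct and is actually more than the paper's proof says on that point. Both constructions yield the same map, by that same uniqueness.
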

\begin{proof}
Let $\cdot$ denote the right action of $\Gamma$ on $L$ and
juxtaposition denote the action of $\Gamma$ on the bimodule
$L^\uparrow$.  Take $l\in L\subset L^\uparrow$ and
$\gamma,\gamma'\in\Gamma$.  We extend $\sigma$ to all of $L^\uparrow$
according to the formula
\beq
\sigma^{\uparrow}(\gamma l\gamma'):=\gamma\sigma_1(l)\ox_\Gamma\sigma_2(l)\gamma'.
\eeq
Recall that $l\cdot \gamma=S(\gamma_1)l\gamma_2$.  To see that
$\sigma^\uparrow$ is well defined, we first define the
$\Gamma^e$-linear map 
\beq
\Sigma:L\ox \Gamma^e=\Gamma\ox L\ox \Gamma\to L^\uparrow\ox_\Gamma L^\uparrow
\eeq
by the same formula $\Sigma(\gamma \ox
l\ox\gamma')=\gamma\sigma_1(l)\ox_\Gamma\sigma_2(l)\gamma'$.  The computation 
\beq
\ba{rlr}
\Sigma(S(\gamma_1)\ox l\ox \gamma_2)&= S(\gamma_1)\sigma_1(l)\ox_\Gamma\sigma_2(l)\gamma_2\\
&= S(\gamma_1)\sigma_1(l)\gamma_2\ox_\Gamma S(\gamma_3)\sigma_2(l)\gamma_4\\
&=\big(\sigma_1(l)\cdot \gamma_1\big)\ox_\Gamma \big(\sigma_2(l)\cdot\gamma_2\big)\\
&=\sigma_1(l\cdot \gamma)\ox_\Gamma \sigma_2(l\cdot\gamma) &\text{(by $\Gamma$-linearity of $\sigma$)}\\
&=\Sigma(1\ox (l\cdot \gamma)\ox 1)
\ea
\eeq
shows that $\sigma^{\uparrow}$ respects the necessary relations to
induce a map on the quotient $L^\uparrow=L\ox_\Gamma \Gamma^e$.  This
recovers our original map $\sigma^\uparrow$, and shows that it is in
fact well defined.
\par
Recall that $\sigma:L\to L\ox L$ was chosen so that
$\xi(l)=\xi(\sigma_1(l))\xi(\sigma_2(l))$ for all $l\in L$, and that
$\xi^{\uparrow}:L^\uparrow\to \Gamma$ is defined by $\gamma
l\gamma'\mapsto \gamma \xi(l)\gamma'$.  So we will have the
commutative diagram
\beq
\xym{
L^{\uparrow}\ar[rr]^{\sigma^{\uparrow}}\ar[dr]_{\xi^{\uparrow}} & & L^{\uparrow}\ox_\Gamma L^{\uparrow}\ar[dl]^{\xi^{\uparrow}\ox_\Gamma \xi^{\uparrow}}\\
 & \Gamma &.}
\eeq
The fact that $\sigma^\uparrow$ is a quasi-isomorphism follows from
commutativity of the above diagram and the fact that $\xi^\uparrow$
and $\xi^\uparrow\ox_\Gamma \xi^\uparrow$ are quasi-isomorphisms.  As
for colinearity of $\sigma^\uparrow$, by the definition of the
coaction on the induced complex $L^\uparrow$ given in Definition
\ref{codef}, we have
\beq
\ba{rl}
(\sigma^{\uparrow}(\gamma l\gamma'))_{-1}\ox (\sigma^{\uparrow}(\gamma l\gamma'))_0&=\big(\gamma\sigma_1(l)\ox \sigma_2(l)\gamma'\big)_{-1}\ox \big(\gamma\sigma_1(l)\ox \sigma_2(l)\gamma'\big)_0\\
&=(\gamma_1\gamma'_1)\ox(\gamma_2\sigma_1(l)\ox\sigma_2(l)\gamma'_2)\\
&=(\gamma_1\gamma'_1)\ox \sigma^{\uparrow}(\gamma_2l\gamma_2')\\
&=(\gamma l\gamma')_{-1}\ox \sigma^{\uparrow}((\gamma l\gamma)_0).
\ea
\eeq
\end{proof}

Now we have a quasi-isomorphism $\omega:K\to K\ox_A K$ and have
produced a quasi-isomorphism $\sigma^\uparrow:L^\uparrow\to
L^\uparrow\ox_\Gamma L^\uparrow$ from the given map $\sigma:L\to L\ox
L$.  We would like to use this information, along with some twisting,
to produce an explicit quasi-isomorphism
\beq
K\# L^\uparrow\to (K\# L^\uparrow)\ox_{A\#\Gamma}(K\# L^\uparrow).
\eeq
The next lemma offers the ``twisting'' portion of the proposed construction.

\begin{lemma}
The isomorphism of $\k$-complexes
\beq
\ba{c}
(K\ox_AK)\ox(L\ox L)\to (K\ox L)\ox_A(K\ox L)\\
(x\ox_Ay)\ox (l\ox l')\mapsto (-1)^{|l||y|} (x\ox l)\ox_{A}(y\ox l')
\ea
\eeq
extends uniquely to an isomorphism $\phi:( K\ox_A K)\#
(L^{\uparrow}\ox_\Gamma L^{\uparrow})\to (K\#
L^{\uparrow})\ox_{A\#\Gamma}(K\# L^{\uparrow})$ of complexes of $A\#\Gamma$-bimodules.
\label{diglem}
\end{lemma}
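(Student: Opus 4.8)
The plan is to treat both sides as complexes of $A\#\Gamma$-bimodules and to build $\phi$ as the unique $A\#\Gamma$-bilinear extension of the given $\k$-linear ``seed'' map. First I would record that both objects are well defined: by condition (III) the complex $K\ox_A K$ is one of $\Gamma$-equivariant $A$-bimodules, and $L^\uparrow\ox_\Gamma L^\uparrow$ is a complex of Hopf bimodules (as noted just before Lemma \ref{UP}), so the source $(K\ox_A K)\#(L^\uparrow\ox_\Gamma L^\uparrow)$ is an $A\#\Gamma$-bimodule complex by Definition \ref{defOne}, while the target is a bimodule complex as a tensor of bimodules over $A\#\Gamma$. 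The point that makes the seed formula clean is that the coaction on $L^\uparrow$ restricts \emph{trivially} to $L\subset L^\uparrow$, i.e. $\rho(l)=1\ox l$; hence on the subcomplex $K\ox L$ the smash right $A$-action collapses to $(x\ox l)\cdot a=(xa)\ox l$, with no antipode twist. This is exactly why the image of the seed lands in the untwisted piece $(K\ox L)\ox_A(K\ox L)$ and why the only sign appearing there is the Koszul sign $(-1)^{|l||y|}$ of the middle-four interchange.

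For uniqueness I would argue that $L\ox L$ generates $L^\uparrow\ox_\Gamma L^\uparrow$ as a Hopf bimodule. This follows because $L$ generates $L^\uparrow$ under the $\Gamma$-bimodule action (the same fact exploited in the proof of Theorem \ref{bigthm1}); the left action supplies the $\Gamma^{op}$-slot of the first factor, the right action supplies the second slot of the second factor, and the balancing $\ox_\Gamma$ absorbs the remaining two slots. Since $K\ox_A K$ is free over $A^e$ on a base by condition (II), it follows that $(K\ox_A K)\ox(L\ox L)$ generates the whole source over $A\#\Gamma$. Thus any $A\#\Gamma$-bilinear map agreeing with the seed is forced, and $\phi$ is unique once it exists.

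For existence I would follow the template of Lemma \ref{lastlem} together with the well-definedness argument in the proof of Theorem \ref{bigthm1}. By additivity and naturality of all the constructions in the variable $L$, and since $L$ is a bounded-above complex of projectives, hence a summand of a free complex, it suffices to treat $L$ free on a homogeneous base $\bar L$. In that case $L^\uparrow\cong\Gamma\ox\bar L\ox\Gamma$ and I can write $\phi$ by an explicit $\k$-linear formula on monomials, necessarily antipode-twisted exactly as in the isomorphism displayed in the proof of Lemma \ref{lastlem} (the terms $S^{-1}(\gamma_1)$ and $S(\gamma_3)$ there). I would then check directly that this formula descends across the relations defining $\ox_\Gamma$ in the source and $\ox_{A\#\Gamma}$ in the target, that it is left and right $A\#\Gamma$-linear, that it restricts to the seed on $K\ox L\ox K\ox L$, and that it commutes with the differentials. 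Bijectivity I would obtain by writing the analogous explicit (again antipode-twisted) inverse $\psi$ and verifying $\psi\phi=\mathrm{id}$ and $\phi\psi=\mathrm{id}$ on monomials; in the free case these maps merely permute matching bases up to sign and antipode, so the inverse is transparent, and the summand argument then descends the isomorphism from free $L$ back to general projective $L$.

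The main obstacle is the verification that $\phi$ respects the balancing relation of $\ox_{A\#\Gamma}$ in the target. Moving an element $a\gamma\in A\#\Gamma$ across the balanced tensor entangles the smash right $A$-action on $K\#L^\uparrow$ --- which is twisted through the coaction $u\mapsto u_{-1}\ox u_0$, and hence through the antipode appearing in $\rho$ (Definition \ref{codef}) --- with the diagonal left $\Gamma$-action on the second factor. Reconciling these, while simultaneously tracking the Koszul sign $(-1)^{|l||y|}$ and the $S/S^{-1}$ twists, is precisely what pins down the formula for $\phi$ and forces the twisting already visible in the seed. Once the bookkeeping is arranged so that the $\ox_{A\#\Gamma}$ relation matches the smash action of Definition \ref{defOne} on $(K\ox_A K)\#(L^\uparrow\ox_\Gamma L^\uparrow)$, the remaining checks are routine and bifunctoriality (Lemma \ref{smashfunctr}) guarantees compatibility with the differentials.
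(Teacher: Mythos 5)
Your proposal is correct and follows essentially the same route as the paper: an explicit antipode-twisted formula for $\phi$ together with an explicit twisted inverse, with the well-definedness and linearity checks handled by Sweedler manipulations (and, for the free-base reduction, the same device used in the proof of Theorem \ref{bigthm1}). The only difference is presentational: the paper writes the extension directly on all of $L^{\uparrow}\ox_\Gamma L^{\uparrow}$ via the coaction, namely $\phi\colon(x\ox_A y)\ox(l\ox_\Gamma l')\mapsto(-1)^{|l||y|}(x\ox l_0)\ox_{A\#\Gamma}(S^{-1}(l_{-1})y\ox l')$ with inverse $(x\ox l)\ox_{A\#\Gamma}(y\ox l')\mapsto(-1)^{|l||y|}(x\ox_A l_{-1}y)\ox(l_{0}\ox_\Gamma l')$, whereas you recover the same twist by forcing compatibility with the balancing relation over $\ox_{A\#\Gamma}$ --- which is indeed exactly where the $S^{-1}(l_{-1})$ comes from.
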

\begin{proof}
The map $\phi$ is given by
\beq
\phi:(x\ox_A y)\ox(l\ox_\Gamma l')\mapsto (-1)^{|l||y|}(x\ox l_0)\ox_{A\#\Gamma}(S^{-1}(l_{-1})y\ox l'),
\eeq
for $x,y\in K$, $l,l'\in L^\uparrow$.  The fact that $\phi$ is well
defined follows by standard manipulations, which we do not reproduce
here.  The fact that $\phi$ is a chain map can be verified by using
the $\Gamma$-linearity and $\Gamma$-colinearity of the differentials
on $K$ and $L^\uparrow$ respectively.
\par
In order to show that $\phi$ is an $A\#\Gamma$-bimodule map, the only
non-trivial things to check are left $\Gamma$-linearity and right
$A$-linearity.  For left $\Gamma$-linearity we have, for any $\gamma\in\Gamma$,
\beq
\ba{rl}
\phi(\gamma((x\ox_A y)\ox(l\ox_\Gamma l')))&=\phi((\gamma_1x\ox_A \gamma_2y)\ox(\gamma_3l\ox_\Gamma l'))\\
&=\pm(\gamma_1x\ox \gamma_4l_0)\ox_{A\#\Gamma}(S^{-1}(\gamma_3l_{-1})\gamma_2y\ox l')\\
&=\pm(\gamma_1x\ox \gamma_4l_0)\ox_{A\#\Gamma}(S^{-1}(l_{-1})S^{-1}(\gamma_3)\gamma_2y\ox l')\\
&=\pm(\gamma_1x\ox \gamma_2l_0)\ox_{A\#\Gamma}(S^{-1}(l_{-1})y\ox l')\\
&=\pm\gamma\left((x\ox l_0)\ox_{A\#\Gamma}(S^{-1}(l_{-1})y\ox l')\right)\\
&=\gamma\phi((x\ox_A y)\ox(l\ox_\Gamma l')).
\ea
\eeq
For right $A$-linearity we have, for any $a\in A$,
\beq
\ba{rl}
\phi(((x\ox_A y)\ox(l\ox_\Gamma l'))a)&=\phi((x\ox_A y({^{l_{-1}l'_{-1}}a}))\ox(l_0\ox_\Gamma l'_0)\\
& =\pm(x\ox l_0)\ox_{A\#\Gamma}(S^{-1}(l_{-1})(y({^{l_{-2}l'_{-1}}a}))\ox l'_0)\\
&=\pm(x\ox l_0)\ox_{A\#\Gamma}((S^{-1}(l_{-1})y)({^{S^{-1}(l_{-2})l_{-3}l'_{-1}}a}))\ox l'_0)\\
&=\pm(x\ox l_0)\ox_{A\#\Gamma}((S^{-1}(l_{-1})y)({^{l'_{-1}}a}))\ox l'_0)\\
&=\pm \left((x\ox l_0)\ox_{A\#\Gamma}(S^{-1}(l_{-1})y\ox l'_0)\right)a\\
&=\phi((x\ox_A y)\ox(l\ox_\Gamma l'))a.
\ea
\eeq
The inverse to $\phi$ is the map
\beq
(x\ox l)\ox_{A\#\Gamma}(y\ox l')\mapsto (-1)^{|l||y|}(x\ox_A l_{-1}y)\ox(l_{0}\ox_\Gamma l').
\eeq
\end{proof}

\begin{proposition}
Let $\phi$ be the isomorphism of Lemma \ref{diglem}, and
$\sigma^{\uparrow}:L^{\uparrow}\to L^{\uparrow}\ox_\Gamma
L^{\uparrow}$ be the quasi-isomorphism of Lemma \ref{UP}.  Then the map
\beq
\phi(\omega\ox \sigma^{\uparrow}):K\# L^{\uparrow}\to K\# L^{\uparrow}\ox_{A\#\Gamma}K\# L^{\uparrow}
\eeq
is a quasi-isomorphism of $(A\#\Gamma)^e$-complexes.
\label{digggg}
\end{proposition}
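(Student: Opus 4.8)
The plan is to reduce the claim to two ingredients that are already in hand: the fact that $\phi$ is an isomorphism, and the fact that the smash product of two quasi-isomorphisms is a quasi-isomorphism over a field. First I would record that the map under consideration is the composite of the smash product map $\omega\#\sigma^{\uparrow}:K\# L^{\uparrow}\to (K\ox_A K)\#(L^{\uparrow}\ox_\Gamma L^{\uparrow})$ with the isomorphism $\phi$ of Lemma \ref{diglem}. Here $\omega\ox\sigma^{\uparrow}$ denotes precisely this smash product map: by condition (III) the map $\omega:K\to K\ox_A K$ is a morphism of complexes of $\Gamma$-equivariant $A$-bimodules, and by Lemma \ref{UP} the map $\sigma^{\uparrow}:L^{\uparrow}\to L^{\uparrow}\ox_\Gamma L^{\uparrow}$ is a morphism of complexes of Hopf bimodules. (The targets $K\ox_A K$ and $L^{\uparrow}\ox_\Gamma L^{\uparrow}$ carry the requisite structures, as noted in the discussion preceding Lemma \ref{UP}, so the smash product $(K\ox_A K)\#(L^{\uparrow}\ox_\Gamma L^{\uparrow})$ is defined and is exactly the domain of $\phi$.) By the bifunctoriality of Lemma \ref{smashfunctr}, the smash product map $\omega\#\sigma^{\uparrow}$ is then a map of complexes of $A\#\Gamma$-bimodules, and since $\phi$ is an isomorphism of such complexes by Lemma \ref{diglem}, the composite $\phi(\omega\ox\sigma^{\uparrow})$ is a chain map of $(A\#\Gamma)^e$-complexes, as required.

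It then remains only to verify that the composite is a quasi-isomorphism, and since $\phi$ is an isomorphism it suffices to treat $\omega\ox\sigma^{\uparrow}$. The key observation is that, after forgetting all module structure, the underlying map of $\k$-complexes of $\omega\#\sigma^{\uparrow}$ is literally the $\k$-tensor product $\omega\ox_\k\sigma^{\uparrow}$, because the underlying space of a smash product $X\# Y$ is the tensor product $X\ox Y$. Both $\omega:K\to K\ox_A K$ and $\sigma^{\uparrow}$ are quasi-isomorphisms of $\k$-complexes, the former by condition (III) and the latter by Lemma \ref{UP}. Since the $\k$-tensor product of two quasi-isomorphisms over a field is again a quasi-isomorphism---the very argument already invoked in the proof of Lemma \ref{lemmamma}---I conclude that $\omega\ox\sigma^{\uparrow}$ is a quasi-isomorphism, and hence so is $\phi(\omega\ox\sigma^{\uparrow})$.

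I do not expect a substantial obstacle here: the conceptual content has been front-loaded into Lemmas \ref{diglem} and \ref{UP}, and what remains is bookkeeping. The only point requiring genuine (if routine) care is confirming the identification of the underlying $\k$-map of the smash product map with the plain tensor product $\omega\ox_\k\sigma^{\uparrow}$, together with the matching of the smash product structures so that $\phi$ has exactly the domain $(K\ox_A K)\#(L^{\uparrow}\ox_\Gamma L^{\uparrow})$; both are immediate from Definition \ref{defOne} and the remarks preceding Lemma \ref{UP}. Once these identifications are in place, the quasi-isomorphism claim is a formal consequence of the Künneth-type argument over a field, and the $(A\#\Gamma)^e$-linearity is a formal consequence of Lemmas \ref{smashfunctr} and \ref{diglem}.
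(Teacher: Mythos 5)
Your proof is correct and follows essentially the same route as the paper: $(A\#\Gamma)^e$-linearity of $\omega\ox\sigma^{\uparrow}$ via Lemma \ref{smashfunctr} together with condition (III) and Lemma \ref{UP}, the quasi-isomorphism property from the K\"unneth-over-a-field argument applied to the underlying $\k$-tensor product, and then composition with the isomorphism $\phi$ of Lemma \ref{diglem}. The extra care you take in identifying the underlying $\k$-map and the domain of $\phi$ is sound but is left implicit in the paper's (shorter) proof.
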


\begin{proof}
Since $\sigma^{\uparrow}$ is a map of complexes of Hopf bimodules by
Lemma \ref{UP}, and $\omega$ is a map of complexes of equivariant
$A$-bimodules by choice, $\omega\ox\sigma^{\uparrow}$ is
$(A\#\Gamma)^e$-linear, by Lemma \ref{smashfunctr}.  Also, the product
map $\omega\ox \sigma^\uparrow$ is a quasi-isomorphism since $\omega$
and $\sigma^\uparrow$ are themselves quasi-isomorphisms.  Now, since
$\phi$ is an isomorphism of $(A\#\Gamma)^e$-complexes, the claim follows.
\end{proof}

For any algebra extension $A\#\Gamma\to B$, we define the cup product
on $\Hom_{(A\#\Gamma)^e}(K\# L^{\uparrow}, B)$ by way of the diagonal
map $K\# L^{\uparrow}\to K\# L^{\uparrow}\ox_{A\#\Gamma}K\#
L^{\uparrow}$ given in Proposition \ref{digggg}.
\par
The following are the {\bf hypotheses for Theorem \ref{bigthm2}}: $K$
is a bimodule resolution of $A$ equipped with a diagonal map
$\omega:K\to K\ox_A K$ satisfying conditions (I)-(III), and $L$ is a
projective resolution of the trivial right $\Gamma$-module $\k$ with a
quasi-isomorphism $\sigma:L\to L\ox L$.  We give $K\# L^\uparrow$ the
diagonal quasi-isomorphism of Proposition \ref{digggg}. 

\begin{theorem}
For any algebra extension $B$ of the smash product $A\#\Gamma$, the isomorphism
\beq
\Xi:\Hom_{(A\#\Gamma)^e}(K\# L^{\uparrow}, B)\overset{\cong}\to \Hom_\Gamma(L,\Hom_{A^e}(K,B))
\eeq
of Theorem \ref{bigthm1} is one of (not-necessarily-associative) dg algebras.
\label{bigthm2}
\end{theorem}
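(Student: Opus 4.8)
The plan is to verify directly that $\Xi$ carries the cup product on $\Hom_{(A\#\Gamma)^e}(K\# L^{\uparrow}, B)$ to the product on $\Hom_\Gamma(L,\Hom_{A^e}(K,B))$ described in Section \ref{rmind}; since $\Xi$ is already an isomorphism of chain complexes by Theorem \ref{bigthm1}, multiplicativity is all that remains to be checked. Write $\Delta:=\phi(\omega\ox\sigma^{\uparrow})$ for the diagonal of Proposition \ref{digggg}, so that the source product is $\theta\cdot\theta'=\mu_B(\theta\ox_{A\#\Gamma}\theta')\Delta$. On the target side the product is built, as in Section \ref{rmind}, from the quasi-isomorphism $\sigma:L\to L\ox L$ together with the cup product on the $\Gamma$-module dg algebra $\mathscr{B}=\Hom_{A^e}(K,B)$, the latter being $\mu_B(-\ox_A-)\omega$.

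First I would reduce to a computation on elements. A map in $\Hom_\Gamma(L,\mathscr{B})$ is determined by its values on $L$, so it suffices to compare $\Xi(\theta\cdot\theta')$ and $\Xi(\theta)\cdot\Xi(\theta')$ after evaluating at an arbitrary $l\in L\subset L^{\uparrow}$ and then at $x\in K$. The crucial simplification is that the coaction on $L^{\uparrow}$ restricts to the trivial coaction on $L$ (as noted just before Theorem \ref{bigthm1}), so that $\sigma^{\uparrow}(l)=\sigma_1(l)\ox_\Gamma\sigma_2(l)$ lies in $L\ox L$, where each tensor factor again has trivial coaction. Feeding $\omega(x)=\omega_1(x)\ox_A\omega_2(x)$ and this element through the twisting isomorphism $\phi$ of Lemma \ref{diglem}, the factors $S^{-1}(l_{-1})$ collapse to the identity and $\phi$ degenerates to the plain Koszul interleaving. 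Concretely, I expect to obtain
\beq
\Delta(x\ox l)=(-1)^{|\omega_2(x)||\sigma_1(l)|}\,(\omega_1(x)\ox\sigma_1(l))\ox_{A\#\Gamma}(\omega_2(x)\ox\sigma_2(l)),
\eeq
so that along this sublocus the diagonal on $K\# L^{\uparrow}$ literally factors as $\omega$ on $K$ and $\sigma$ on $L$.

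With this in hand both products unwind to the same expression. Using that $l\in L$ has trivial coaction, $\theta(-\ox l)$ is simply $x\mapsto(-1)^{|x||l|}\theta(x\ox l)$, and applying $\mu_B(\theta\ox_{A\#\Gamma}\theta')$ to the displayed $\Delta(x\ox l)$ gives a value $\pm\,\theta(\omega_1(x)\ox\sigma_1(l))\,\theta'(\omega_2(x)\ox\sigma_2(l))$ in $B$. Expanding the target product $\mu_{\mathscr{B}}(\Xi\theta\ox\Xi\theta')\sigma$, evaluating the inner cup product on $\mathscr{B}$ via $\omega$, and again using the trivial coaction to rewrite each $\Xi\theta(\sigma_i(l))(\omega_i(x))$ as $\pm\,\theta(\omega_i(x)\ox\sigma_i(l))$, produces the same product of values in $B$. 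The only remaining point is to check that the accumulated Koszul signs coincide, and this is where the real care is needed: one must reconcile the sign $(-1)^{|\omega_2(x)||\sigma_1(l)|}$ coming from $\phi$ with the signs introduced by evaluating the tensor product of maps $\Xi\theta\ox\Xi\theta'$ on $\sigma_1(l)\ox\sigma_2(l)$ and by the very definition of $\theta(-\ox l)$. I expect the main obstacle to be precisely this sign bookkeeping; once it is settled, multiplicativity of $\Xi$ follows, and together with Theorem \ref{bigthm1} this shows that $\Xi$ is an isomorphism of (not-necessarily-associative) dg algebras.
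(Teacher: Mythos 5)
Your proposal follows essentially the same route as the paper's proof: evaluate both products on $l\in L\subset L^{\uparrow}$ and $x\in K$, use the triviality of the coaction on $L$ and the restriction $\sigma^{\uparrow}|L=\sigma$ so that $\phi$ reduces to the Koszul interleaving with sign $(-1)^{|\omega_2(x)||\sigma_1(l)|}$, and reduce the whole matter to matching Koszul signs. The sign bookkeeping you defer is exactly what the paper carries out, comparing exponents $\epsilon$ and $\epsilon'$ and checking $\epsilon\equiv\epsilon'\bmod 2$ using $|x|=|\omega_1(x)|+|\omega_2(x)|$ and $|l|=|\sigma_1(l)|+|\sigma_2(l)|$, so your argument is complete once that elementary verification is written out.
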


Let us note that, if $K$ and $L$ are chosen appropriately, the dg
algebra $\Hom_\Gamma(L,\Hom_{A^e}(K,B))$ will be associative.  It
follows, by the theorem, that $\Hom_{(A\#\Gamma)^e}(K\# L^{\uparrow},
B)$ will also be associative in this case.  For example, one can
always take $K$ to be the bar resolution $BA$ of $A$ and $L$ to be the
bar resolution $\k\ox_\Gamma B\Gamma$ of $\k$ to get this property.

\begin{proof}
We want to verify commutativity of the diagram
\beqlbl
{\Small
\xymatrix{
\Hom(K\# L^{\uparrow}, B)\ox \Hom(K\# L^{\uparrow}, B)\ar[r]^(.43){\Xi\ox\Xi}\ar[d]^{mult} & \Hom(L,\Hom(K,B))\ox \Hom_\Gamma(L,\Hom(K,B))\ar[d]^{mult}\\
\Hom(K\# L^{\uparrow}, B)\ar[r]^{\Xi} & \Hom(L,\Hom(K,B)).
}}
\label{dig}
\eeqlbl
There are three multiplications we need to deal with here.  For the
purpose of this proof we will denote the products on 
\beq
\Hom_{(A\#\Gamma)^e}(K\# L^{\uparrow}, B),\ \Hom(L,\Hom_{A^e}(K,B)) ,\ \text{and}\ \Hom_{A^e}(K,B) 
\eeq
by a dot $\cdot$, an asterisk $\ast$, and juxtaposition respectively.
Let $\theta$ and $\theta'$ be functions in $\Hom_{(A\#\Gamma)^e}(K\#
L^\uparrow, A\#\Gamma)$ and fix arbitrary $x\in K$ and $l\in L\subset L^\uparrow$.
\par
Following around the top of (\ref{dig}) sends $\theta\ox\theta'$ to
the function $\Xi(\theta)\ast\Xi(\theta')\in
\Hom_\Gamma(L,\Hom_{A^e}(K,B))$.  This function sends $l\in L$ to the map 
\beq
(-1)^{|\theta'||\sigma_1(l)|}\theta(-\ox \sigma_1(l))\theta'(-\ox\sigma_2(l))
\eeq
 in $\Hom_{A^e}(K,B)$, where $\theta(-\ox \sigma_1(l))$ and
 $\theta'(-\ox\sigma_2(l))$ are as defined in the paragraphs preceding
 Theorem \ref{bigthm1}.  Since the coaction on $L^\uparrow$ restricts
 to the trivial coaction on $L$, the above function evaluated at $x$
 is the element
\beq
(-1)^{\epsilon}\theta\left(\omega_1(x)\ox \sigma_1(l)\right)\theta'\left(\omega_2(x)\ox\sigma_2(l)\right)\in B,
\eeq
where
\beq
\ba{rl}
\epsilon&=|\theta'||\sigma_1(l)|+|\omega_1(x)|(|\theta'|+|\sigma_2(l)|+|\sigma_1(l)|)+|\omega_2(x)||\sigma_2(l)|\\
&=|\theta'||\sigma_1(l)|+|\omega_1(x)|(|\theta'|+|l|)+|\omega_2(x)||\sigma_2(l)|.\\
&=|\theta'|(|\sigma_1(l)|+|\omega_1(x)|)+|\omega_1(x)||l|+|\omega_2(x)||\sigma_2(l)|.
\ea
\eeq
\par
Following around the bottom row sends $\theta\ox\theta'$ to the
function $\Xi(\theta\cdot\theta')\in \Hom_\Gamma(L,\Hom_{A^e}(K,B))$,
which takes our element $l\in L$ to $(\theta\cdot\theta')(-\ox l)\in
\Hom_{A^e}(K, B)$.  Evaluating at $x\in K$ produces the element
$(-1)^{|x||l|}(\theta\cdot\theta')(x\ox l)\in B$.  Recalling the
diagonal map on $K\# L^{\uparrow}$ given in Proposition \ref{digggg},
the formula for $\phi|(K\ox_AK)\ox(L\ox L)$ given in Lemma
\ref{diglem}, and the fact that $\sigma^\uparrow|L=\sigma$, we have
the equality
\beq
(-1)^{|x||l|}(\theta\cdot\theta')(x\ox l) =(-1)^{\epsilon'}\theta\left(\omega_1(x)\ox \sigma_1(l)\right)\theta'\left(\omega_2(x)\ox \sigma_2(l)\right),
\eeq
where
\beq
\ba{rl}
\epsilon'&=|x||l|+|\sigma_1(l)||\omega_2(x)|+|\theta'|(|\omega_1(x)|+|\sigma_1(l)|)\\
&=(|\omega_1(x)|+|\omega_2(x)|)|l|+|\sigma_1(l)||\omega_2(x)|+|\theta'|(|\omega_1(x)|+|\sigma_1(l)|)\\
&=|\omega_1(x)||l|+|\omega_2(x)||l|+|\omega_2(x)||\sigma_1(l)|+|\theta'|(|\omega_1(x)|+|\sigma_1(l)|)\\
&=|\omega_1(x)||l|+|\omega_2(x)|(|\sigma_1(l)|+|\sigma_2(l)|)+|\omega_2(x)||\sigma_1(l)|+|\theta'|(|\omega_1(x)|+|\sigma_1(l)|)\\
&\equiv |\omega_1(x)||l|+|\omega_2(x)||\sigma_2(l)|+|\theta'|(|\omega_1(x)|+|\sigma_1(l)|) \mod 2\\
&=\epsilon.
\ea
\eeq
So following around the top or bottom of (\ref{dig}) produces the same
function.
\end{proof}

\begin{corollary}
Let $B$ be an algebra extension of $A\#\Gamma$, and take $K$ and $L$
as in Theorem \ref{bigthm2}.  Then there is an isomorphism of algebras
\beq
\mathrm{HH}(A\#\Gamma, B)\cong\mathrm{H}\left(\Hom_\Gamma(L,\Hom_{A^e}(K,B))\right).
\eeq
\label{dimalg1}
\end{corollary}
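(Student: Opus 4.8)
The plan is to deduce the corollary from Theorem \ref{bigthm2} by passing to homology. First I would recall that, by Theorem \ref{lol}, the smash product complex $K\# L^{\uparrow}$ is a projective $A\#\Gamma$-bimodule resolution of $A\#\Gamma$. Consequently the homology of $\Hom_{(A\#\Gamma)^e}(K\# L^{\uparrow}, B)$ computes the Hochschild cohomology $\mathrm{HH}(A\#\Gamma, B)$ as a graded vector space, and the isomorphism $\Xi$ of Theorem \ref{bigthm1} already identifies this additively with $\mathrm{H}(\Hom_\Gamma(L,\Hom_{A^e}(K,B)))$. So the only real content is that this identification respects the ring structures.

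Next I would argue that the product on $\Hom_{(A\#\Gamma)^e}(K\# L^{\uparrow}, B)$ defined via the diagonal of Proposition \ref{digggg} induces the genuine cup product on $\mathrm{HH}(A\#\Gamma,B)$. By the discussion in Section \ref{rmind}, the cup product is computed from any projective resolution together with a quasi-isomorphism onto its tensor square lying over the augmentation, as in diagram (\ref{dg}), and any two such diagonal approximations are homotopic and hence give the same product on cohomology. Thus it suffices to check that $\phi(\omega\ox\sigma^{\uparrow})$ is such a diagonal approximation, i.e. that after composing with the augmentation $(\tau\ox\xi^{\uparrow})\ox_{A\#\Gamma}(\tau\ox\xi^{\uparrow})$ it recovers $\tau\ox\xi^{\uparrow}$. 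This would reduce to the facts that $\omega$ covers $\mathrm{id}_A$ by choice, that $\sigma^{\uparrow}$ covers $\mathrm{id}_\Gamma$ via the commuting triangle with $\xi^{\uparrow}$ established in Lemma \ref{UP}, and that the twisting isomorphism $\phi$ of Lemma \ref{diglem} is compatible with the augmentation; assembling these gives commutativity of the augmented diagonal diagram for $K\# L^{\uparrow}$.

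With this in hand, Theorem \ref{bigthm2} asserts that $\Xi$ is an isomorphism of (not-necessarily-associative) dg algebras, so I would simply take homology. An isomorphism of dg algebras induces an isomorphism of homology rings, and although neither dg algebra is strictly associative, each is associative up to homotopy as noted in Section \ref{rmind}; hence both homology rings are genuinely associative and $\mathrm{H}(\Xi)$ is a map of graded algebras. Combining the additive identification with this multiplicativity yields $\mathrm{HH}(A\#\Gamma, B)\cong \mathrm{H}(\Hom_\Gamma(L,\Hom_{A^e}(K,B)))$ as algebras.

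I expect the main obstacle to be the middle step: confirming that $\phi(\omega\ox\sigma^{\uparrow})$ is a legitimate diagonal approximation rather than merely some quasi-isomorphism onto the tensor-square resolution. A general quasi-isomorphism onto the tensor square need only cover multiplication by a central unit of $A\#\Gamma$, and only those covering $\mathrm{id}$ compute the cup product; so checking that the augmentation of $\phi(\omega\ox\sigma^{\uparrow})$ returns $\tau\ox\xi^{\uparrow}$ is the one genuinely computational point, and it is precisely where the interaction between the twist built into $\phi$ and the smash-product multiplication must be controlled.
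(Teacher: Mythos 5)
Your proposal is correct and follows essentially the same route as the paper, whose own proof is a two-line deduction from Theorem \ref{bigthm2} together with the fact that $\mathrm{HH}(A\#\Gamma,B)=\mathrm{H}\bigl(\Hom_{(A\#\Gamma)^e}(K\# L^{\uparrow},B)\bigr)$ as an algebra. The middle step you single out---checking that $\phi(\omega\ox\sigma^{\uparrow})$ is compatible with the augmentation $\tau\ox\xi^{\uparrow}$ and hence a genuine diagonal approximation in the sense of diagram (\ref{dg})---is exactly the point the paper leaves implicit, and your reduction of it to the commuting triangles for $\omega$ and $\sigma^{\uparrow}$ and the compatibility of $\phi$ with the augmentations is the right way to fill it in.
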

\begin{proof}
This is an immediate consequence of Theorem \ref{bigthm2} and the fact
that
\beq
\mathrm{HH}(A\#\Gamma, B)=\mathrm{H}\big(\Hom_{(A\#\Gamma)^e}(K\# L^{\uparrow}, M)\big)
\eeq
as an algebra.
\end{proof}

As was the case with Theorem \ref{bigthm1}, we can drop condition (II)
on $K$.

\begin{corollary}
Let $K$, $L$, and $B$ be as in Theorem \ref{bigthm2}.  Let $P\to A$ be
a $\Gamma$-equivariant bimodule resolution of $A$ which is projective
over $A^e$ in each degree.  Suppose additionally that $P$ admits a
diagonal quasi-isomorphism $P\to P\ox_A P$ which is $\Gamma$-linear.
Then there is third dg algebra $\mathscr{A}$ which admits quasi-isomorphisms
\beqlbl
\Hom_{(A\#\Gamma)^e}(K\# L^{\uparrow}, B)\overset{\sim}\rightarrow \mathscr{A}\overset{\sim}\leftarrow\Hom_\Gamma(L,\Hom_{A^e}(P,B))
\label{mapmap}
\eeqlbl
which are all algebra maps up to a homotopy.
\label{bigcor2}
\end{corollary}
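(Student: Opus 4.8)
The plan is to avoid working inside the category of $\Gamma$-equivariant $A$-bimodules, where the comparison maps are not projective, and instead to transport every product across the isomorphism $\Xi$ to the category of $A\#\Gamma$-bimodules, where $K\# L^\uparrow$ is genuinely projective. The first point to record is that condition (II) plays no role in the proofs of Theorems \ref{bigthm1} and \ref{bigthm2}: it enters only through Theorem \ref{lol}, to guarantee that $K\# L^\uparrow$ is a \emph{projective} resolution, whereas the isomorphism $\Xi$ and its compatibility with the dg algebra structures hold for any $\Gamma$-equivariant $A^e$-complex carrying an equivariant diagonal. Since $P$ is equivariant, $A^e$-projective, and comes equipped with a $\Gamma$-linear diagonal $\omega_P\colon P\to P\ox_A P$ by hypothesis, Proposition \ref{digggg} and Theorem \ref{bigthm2} apply verbatim with $P$ in place of $K$.

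Concretely, I would give $P\# L^\uparrow$ the diagonal $\phi(\omega_P\ox\sigma^{\uparrow})$ of Proposition \ref{digggg}, so that $\Hom_{(A\#\Gamma)^e}(P\# L^\uparrow,B)$ becomes a (not-necessarily-associative) dg algebra and
\[
\Xi_P\colon \Hom_{(A\#\Gamma)^e}(P\# L^\uparrow,B)\overset{\cong}{\to}\Hom_\Gamma(L,\Hom_{A^e}(P,B))
\]
is an isomorphism of dg algebras. By Lemma \ref{lemmamma} the augmentation $P\# L^\uparrow\to A\#\Gamma$ is a quasi-isomorphism of $(A\#\Gamma)^e$-complexes, even though $P\# L^\uparrow$ need not be projective. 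Because $K\# L^\uparrow$ is a bounded-above complex of projective $(A\#\Gamma)^e$-modules (Theorem \ref{lol}), its augmentation lifts through this quasi-isomorphism to a chain map $g\colon K\# L^\uparrow\to P\# L^\uparrow$ covering $\mathrm{id}_{A\#\Gamma}$, unique up to homotopy; since both complexes resolve $A\#\Gamma$, the map $g$ is a quasi-isomorphism.

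Next I would verify that $g^{\ast}\colon \Hom_{(A\#\Gamma)^e}(P\# L^\uparrow,B)\to\Hom_{(A\#\Gamma)^e}(K\# L^\uparrow,B)$ is an algebra map up to homotopy. Writing $\omega_K,\omega_P$ for the diagonals of Proposition \ref{digggg} on $K\# L^\uparrow$ and $P\# L^\uparrow$, the two maps $(g\ox_{A\#\Gamma} g)\circ\omega_K$ and $\omega_P\circ g$, from $K\# L^\uparrow$ to $(P\# L^\uparrow)\ox_{A\#\Gamma}(P\# L^\uparrow)$, both cover $\mathrm{id}_{A\#\Gamma}$, and the target augments to $A\#\Gamma$ by a quasi-isomorphism (the tensor product of projective, hence flat, resolutions over $A\#\Gamma$). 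As $K\# L^\uparrow$ is projective, these two maps are linked by an $(A\#\Gamma)^e$-linear homotopy $H$; composing $H$ with $\mu_B$ in the standard way yields the bilinear homotopy witnessing that $g^{\ast}$ intertwines the cup products up to homotopy. This is the decisive step: passing to the smash product has converted an equivariant lifting problem into an ordinary one.

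Finally I would set $\mathscr{A}=\Hom_\Gamma(L,\Hom_{A^e}(K,B))$. The map from the $K$-side is $\Xi_K$, a dg algebra isomorphism, and the map from the $P$-side is the composite $\Xi_K\circ g^{\ast}\circ\Xi_P^{-1}$, a quasi-isomorphism that is an algebra map up to homotopy, being a composite of such; both arrows point into $\mathscr{A}$, as required. (Equivalently one may take $\mathscr{A}=\Hom_{(A\#\Gamma)^e}(K\# L^\uparrow,B)$ with the identity on the $K$-side.) The main obstacle is precisely the one this strategy is built to sidestep: the symmetric approach through the intermediate resolution $Q$ of Corollary \ref{bigcor1} would demand an equivariant diagonal on $Q$ together with equivariant comultiplicativity homotopies for $i_K$ and $i_P$, and such homotopies cannot in general be produced by lifting, since $K$, $P$, and $Q$ are not projective as $\Gamma$-equivariant $A$-bimodules (already the diagonal $\Gamma$-action on the bar resolution is not projective). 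Moving the question across $\Xi$, into the category where $K\# L^\uparrow$ is $(A\#\Gamma)^e$-projective, is exactly what makes the required homotopies available.
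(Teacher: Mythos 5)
Your argument is correct, but it follows a genuinely different route from the paper's. The paper works inside the equivariant category $\mathrm{EQ}_\Gamma A^e\text{-mod}\cong A^e\#\Gamma\text{-mod}$, which by Kaygun's result has enough projectives: it chooses projective resolutions $\mathscr{K}\to K$ and $\mathscr{P}\to P$ \emph{in that category}, obtains the comparison maps and the comultiplicativity homotopies by lifting against the equivariantly projective complexes $\mathscr{K},\mathscr{P}$ (so the homotopies are automatically $\Gamma$-linear and survive $\Hom_\Gamma(L,-)$), and sets $\mathscr{A}=\Hom_\Gamma(L,\Hom_{A^e}(\mathscr{K},B))$. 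You instead stay entirely in $(A\#\Gamma)^e$-mod: you observe that condition (II) enters only through Theorem \ref{lol}, so that Proposition \ref{digggg} and Theorems \ref{bigthm1}--\ref{bigthm2} apply verbatim to $P$, and you compare $K\# L^\uparrow$ (honestly projective) with $P\# L^\uparrow$ (a non-projective resolution) by the ordinary comparison theorem. Both work; the paper's version is symmetric in $K$ and $P$ and does not require re-auditing the earlier proofs for uses of (II), at the cost of invoking the auxiliary abelian category, while yours yields a more economical $\mathscr{A}$ (the $K$-side double complex itself, with one leg an actual dg algebra isomorphism) and makes transparent \emph{why} the smash product construction is the right device. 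Your closing remark slightly mischaracterizes the alternative, though: the paper does produce equivariant homotopies by lifting, just against $\mathscr{K}$ and $\mathscr{P}$ rather than against $K$, $P$, or $Q$. One small repair to your justification: $(P\# L^\uparrow)\ox_{A\#\Gamma}(P\# L^\uparrow)$ is not a tensor product of projective (or obviously flat) $A\#\Gamma$-resolutions, since $P\# L^\uparrow$ need not be projective on either side without (II); instead, use Lemma \ref{diglem} to identify it with $(P\ox_A P)\#(L^\uparrow\ox_\Gamma L^\uparrow)$ and then apply the K\"unneth theorem over $\k$ together with flatness of each $P^i$ over $A$ and of each $(L^j)^\uparrow$ over $\Gamma$ to see that it still resolves $A\#\Gamma$; the comparison theorem then applies because the source $K\# L^\uparrow$ is a bounded-below complex of projectives.
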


Let $\eta$ denote any of the maps of (\ref{mapmap}).  The main point
is that for any cycles $f$ and $g$ in the domain, the difference
$\eta(fg)-\eta(f)\eta(g)$ will be a boundary.  So all of the maps of
(\ref{mapmap}) become algebra isomorphisms on homology.  The proof of
this result is a bit of distraction, and has been relegated to the appendix.
\par
As was discussed in the introduction, a spectral sequence $E$ is
called multiplicative if it comes equipped with a bigraded products
$E^{pq}_r\ox E^{p'q'}_r\to E^{(p+p')(q+q')}_r$, for each $r$, such
that each differential $d_r:E_r\to E_r$ is a graded derivation and
each isomorphism $E_{r+1}\cong H(E_r)$ is one of algebras.  The
spectral sequence associated to any filtered dg algebra will be
multiplicative, for example.  For any multiplicative spectral sequence
$E$, the limiting term $E_\infty$ has the natural structure of a
bigraded algebra \cite[Multiplicative Structures 5.4.8]{weibel}.  We
say that a multiplicative spectral sequence converges to a graded
algebra $\mathrm{H}$ if $\mathrm{H}$ carries an additional filtration
and we have an isomorphism of bigraded algebras $E_\infty=\mathrm{gr} \mathrm{H}$.
\par
Recall the filtrations $F^A$ and $F^\Gamma$ on
$\mathrm{HH}(A\#\Gamma,B)$ given in Notation \ref{filtnotes}.  Since
the multiplication on the double complex
$\Hom_\Gamma(L,\Hom_{A^e}(K,B))$ is bigraded, both the row and column
filtrations (i.e. the filtrations induced by the degrees on $K$ and
$L$) give it the structure of a filtered dg algebra.  It follows that
both of the associated spectral sequences are multiplicative.  It also
follows that $F^A$ and $F^\Gamma$ are algebra filtrations on the
Hochschild cohomology.

\begin{corollary}
For any algebra extension $B$ of the smash product $A\#\Gamma$, there
are two multiplicative spectral sequences
\beq
E_2=\Ext_{\Gamma}(\k,\mathrm{HH}(A,B))\Rightarrow \mathrm{HH}(A\#\Gamma,B)
\eeq
and
\beq
'E_1=\Ext_{\Gamma}(\k,\RHom_{A\text{-}\mathrm{bimod}}(A,B))\Rightarrow \mathrm{HH}(A\#\Gamma,B)
\eeq
which converge to the Hochschild cohomology as an algebra.
\label{spectrill}
\end{corollary}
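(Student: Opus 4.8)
The plan is to obtain both spectral sequences directly from the double complex $D := \Hom_\Gamma(L, \Hom_{A^e}(K, B))$, which by Theorem \ref{bigthm2} and Corollary \ref{dimalg1} is a (not-necessarily-associative) dg algebra whose cohomology is $\mathrm{HH}(A\#\Gamma, B)$ with the cup product. The product on $D$ is assembled from the cup product on $\Hom_{A^e}(K,B)$ and the $\sigma$-twisted product on $\Hom_\Gamma(L,-)$ of Section \ref{rmind}, so it is bigraded for the $(L,K)$-bidegree; hence, as noted just before the statement, each of the two filtrations — by the degree on $L$ and by the degree on $K$ — makes $D$ a filtered dg algebra. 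Invoking the standard theory of multiplicative spectral sequences \cite[5.4.8]{weibel}, each filtration then yields a multiplicative spectral sequence whose limit term is isomorphic, as a bigraded algebra, to the associated graded of $\mathrm{H}(D) = \mathrm{HH}(A\#\Gamma,B)$ for the induced filtration ($F^\Gamma$ and $F^A$ of Notation \ref{filtnotes}, respectively). This is exactly the assertion that both spectral sequences converge to the Hochschild cohomology as an algebra.

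It then remains to identify the indicated pages. First I would treat the filtration by $L$-degree, where the $E_0$-differential is the one induced by $d_K$; since each $L^i$ is projective over $\Gamma$, the functor $\Hom_\Gamma(L^i,-)$ is exact and so commutes with cohomology, giving $E_1 \cong \Hom_\Gamma(L, \mathrm{H}(\Hom_{A^e}(K,B))) = \Hom_\Gamma(L, \mathrm{HH}(A,B))$. Here $\mathrm{HH}(A,B)$ carries the $\Gamma$-module algebra structure furnished by Lemma \ref{lemprem} together with the proposition of Section \ref{DIA} establishing that $\Hom_{A^e}(K,B)$ is a $\Gamma$-module dg algebra. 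Passing to the remaining $d_L$-cohomology produces $E_2 = \Ext_\Gamma(\k, \mathrm{HH}(A,B))$. For the filtration by $K$-degree the $E_0$-differential is $d_L$, and the same exactness computation carried out in each fixed $K$-degree yields $'E_1 = \Ext_\Gamma(\k, \Hom_{A^e}(K,B))$; since $\Hom_{A^e}(K,B)$ is the $\Gamma$-module dg algebra serving as our model for $\RHom_{A\text{-bimod}}(A,B)$, this is $'E_1 = \Ext_\Gamma(\k, \RHom_{A\text{-bimod}}(A,B))$ in the sense explained after Theorem \ref{thm0}.

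The one genuinely delicate point will be to check that the bigraded product these pages inherit from $D$ agrees with the Ext-algebra (Yoneda) product named in the statement, rather than merely supplying an abstract bigraded ring. On $E_2$ this reduces to recognizing that the $\sigma$-twisted product induced on $\Hom_\Gamma(L,\mathrm{HH}(A,B))$ computes, after taking cohomology, the Yoneda product on $\Ext_\Gamma(\k,\mathrm{HH}(A,B))$ — which is precisely the content of the construction of Section \ref{rmind} applied to the $\Gamma$-module dg algebra $\mathscr{B} = \Hom_{A^e}(K,B)$ and then pushed through the inner cohomology. The corresponding check for $'E_1$ is the same computation read off before taking inner cohomology. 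Once these product identifications are recorded, the corollary — and with it Theorem \ref{thm0} — is complete.
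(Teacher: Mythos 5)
Your proposal is correct and follows essentially the same route as the paper: both arguments take the row and column filtrations on the double complex $\Hom_\Gamma(L,\Hom_{A^e}(K,B))$, observe that the bigraded product makes it a filtered dg algebra in both ways (hence the spectral sequences are multiplicative and converge to $\gr\,\mathrm{HH}(A\#\Gamma,B)$ by Theorem \ref{bigthm2} and Corollary \ref{dimalg1}), and identify $E_1$, $E_2$, and $'E_1$ via exactness of $\Hom_\Gamma(L^i,-)$ and the definition of $\Ext_\Gamma$. Your closing remark on matching the induced product on $E_2$ with the product of Section \ref{rmind} is a point the paper leaves implicit, but it is consistent with, not a departure from, its argument.
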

\begin{proof}
These spectral sequences are induced by the row and column filtrations
on the (first quadrant) double complex
$\Hom_\Gamma(L,\Hom_{A^e}(K,B))$.  Since the product on
$\Hom_\Gamma(L,\Hom_{A^e}(K,B))$ respects both of these filtrations,
and its homology is the Hochschild cohomology ring
$\mathrm{HH}(A\#\Gamma, B)$, both of the spectral sequences are
multiplicative and converge to the Hochschild cohomology.
\par
Filtering by the degree on $K$ produces the spectral sequence with
$'E_1=\Ext_{\Gamma}(\k,\RHom_{A^e}(A,B))$, where we take
$\RHom_{A^e}(A,B)=\Hom_{A^e}(K,B)$.  Filtering by the degree on $L$
produces a spectral sequence with
$E_1=\Hom_\Gamma(L,\mathrm{HH}(A,B))$, since each $\Hom_\Gamma(L^i,-)$
is exact and hence commutes with homology.  Since the differentials on
$E_1$ are given by $d_L^\ast$, it follows that the $E_2$ term is as described.
\end{proof}

In the language of Notation \ref{filtnotes}, the $E_\infty$-terms of
these spectral sequences are the bigraded algebras
$\gr_\Gamma\mathrm{HH}(A\#\Gamma,B)$ and
$\gr_A\mathrm{HH}(A\#\Gamma,B)$ respectively.

\begin{corollary}
If the global dimension of $\Gamma$ is $\leq 1$ then we have an
isomorphism of algebras 
\beq
\mathrm{gr}_\Gamma\mathrm{HH}(A\#\Gamma,B)\cong\Ext_{\Gamma}(\k,\mathrm{HH}(A,B)).
\eeq
\label{spectrilll}
\end{corollary}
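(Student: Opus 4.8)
The plan is to deduce the statement from the multiplicative spectral sequence of Corollary \ref{spectrill} obtained by filtering the double complex $\Hom_\Gamma(L,\Hom_{A^e}(K,B))$ by the degree on $L$, by showing that the global dimension hypothesis forces this spectral sequence to degenerate at its second page. Recall that this spectral sequence has $E_2$-term the bigraded algebra $\Ext_{\Gamma}(\k,\mathrm{HH}(A,B))$, that it is multiplicative, and that it converges to $\mathrm{HH}(A\#\Gamma,B)$ as an algebra, so that $E_\infty\cong\gr_\Gamma\mathrm{HH}(A\#\Gamma,B)$ as bigraded algebras.

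First I would record the effect of the hypothesis on the shape of the $E_2$ page. Writing the bidegree as $(p,q)$, with $p$ the $\Ext$-degree coming from the $L$-direction and $q$ the internal Hochschild degree, one has $E_2^{p,q}=\Ext^p_\Gamma(\k,\mathrm{HH}^q(A,B))$. Since $\Gamma$ has global dimension $\leq 1$, every $\Gamma$-module admits a projective resolution of length $\leq 1$, and hence $\Ext^p_\Gamma(\k,N)=0$ for all $p\geq 2$ and every $\Gamma$-module $N$. Thus $E_2$ is concentrated in the two columns $p=0$ and $p=1$.

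Next I would argue degeneration for degree reasons. The $r$-th differential is a map $d_r\colon E_r^{p,q}\to E_r^{p+r,q-r+1}$ of bidegree $(r,1-r)$. For $r\geq 2$ a nonzero $d_r$ would require a nonzero source in a column $p\in\{0,1\}$ together with a nonzero target in column $p+r\geq 2$; but the target groups vanish by the previous paragraph (equivalently, since the spectral sequence lives in the first quadrant and only in the columns $p\in\{0,1\}$, no differential of horizontal degree $\geq 2$ can have both ends nonzero). Hence $d_r=0$ for all $r\geq 2$, and the multiplicative isomorphisms $E_{r+1}\cong\H(E_r)$ collapse to isomorphisms of bigraded algebras $E_2\cong E_3\cong\cdots\cong E_\infty$.

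Finally I would combine degeneration with convergence. Since the spectral sequence converges to the Hochschild cohomology as an algebra, there is an isomorphism of bigraded algebras $E_\infty\cong\gr_\Gamma\mathrm{HH}(A\#\Gamma,B)$; composing it with the degeneration isomorphism $E_2\cong E_\infty$ yields the desired algebra isomorphism $\gr_\Gamma\mathrm{HH}(A\#\Gamma,B)\cong\Ext_{\Gamma}(\k,\mathrm{HH}(A,B))$. I expect no serious obstacle: the only point requiring care is that both the degeneration and the convergence be read off at the level of algebras rather than merely of graded vector spaces, and this is precisely what is guaranteed by the multiplicativity of the spectral sequence established in Corollary \ref{spectrill}, together with the bigraded algebra structure on $E_\infty$ recalled in Section \ref{DIA}.
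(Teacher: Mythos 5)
Your proposal is correct and follows exactly the paper's argument: the paper's proof is the one-line observation that $E_2=E_\infty$ when $\mathrm{gldim}\,\Gamma\leq 1$, and you have simply spelled out the standard degree reasons (two nonzero columns force $d_r=0$ for $r\geq 2$) together with the multiplicativity and convergence already established in Corollary \ref{spectrill}. No gaps.
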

\begin{proof}
In this case $E_2=E_\infty$.
\end{proof}

\section{Algebras of extensions as derived invariant algebras}
\label{coHOM!}

The main purpose of this section is to give some multiplicative
spectral sequences converging to the algebra $\Ext_{A\#\Gamma}(M,M)$,
for any $A\#\Gamma$-module $M$.  As was the case with Hochschild
cohomology, we will derive our spectral sequences from some explicit
isomorphism at the level of cochains.  Some related spectral sequences
for groups of extensions, without multiplicative structures, were
given in \cite[Section 3.2.6]{GuGu}.
\par
For any algebra $R$ and $R$-modules $M$ and $N$, there is a natural
bimodule structure on $\Hom_\k(M,N)$ induced by the left $R$-actions
on $M$ and $N$.  In the case that $M=N$, the bimodule structure is
induced by the associated representation $R\to \End_\k(M)$.  Since the
representation $R\to \End_\k(M)$ is an algebra map, the endomorphism
algebra of any $R$-module has the structure of an algebra extension of
$R$.
\par
The Yoneda product on $\Ext_R(M,M)$ is defined in the following
manner: first take a projective resolution $Q\to M$, then define
$\Ext_R(M,M)$ as the homology algebra of the endomorphism dg algebra
$\End_R(Q)$.  An alternate approach to the Yoneda product will be
given in Corollary \ref{cor15}.  Recall that, for any bimodule
resolution $P$ of $R$, the tensor product $P\ox_R M$ provides a
projective resolution of $M$.  This fact is a consequence of
K\"{u}nneth's spectral sequence.

\begin{proposition}[{\cite[Lemma 9.1.9]{weibel}}]
Let $R$ be any algebra, $P$ be a projective $R$-bimodule resolution of
$R$, and $M$ and $N$ be any modules over $R$.  The $\ox$-$\Hom$
adjunction gives an isomorphism of complexes
\beq
\Hom_{R^e}(P, \Hom_\k(M,N))\overset{\cong}\to \Hom_{R}(P\ox_RM,N).
\eeq
In the case that $M=N$, and $P$ is the bar resolution $P=BR$, there is
a quasi-isomorphisms of dg algebras
\beq
\Hom_{R^e}(P, \End_\k(M))\overset{\sim}\to \End_{R}(P\ox_RM).
\eeq
\label{ack!}
\end{proposition}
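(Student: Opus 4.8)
The plan is to prove the two claims separately: the first is pure tensor--hom adjunction, while the second requires an explicit cochain-level comparison map built from the bar diagonal, whose multiplicativity is the one nontrivial point.

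For the first statement I would equip $\Hom_\k(M,N)$ with the bimodule structure $(r\phi r')(m)=r\,\phi(r'm)$ recalled in the preceding paragraph, and define
\[
\Psi\colon \Hom_{R^e}(P,\Hom_\k(M,N))\to \Hom_R(P\ox_R M,N),\qquad \Psi(f)(p\ox m)=f(p)(m),
\]
with inverse sending $g$ to the map $p\mapsto\big(m\mapsto g(p\ox m)\big)$. The content is entirely formal: right $R$-linearity of $f$ is precisely what makes $\Psi(f)$ descend across the relation $pr\ox m=p\ox rm$ defining $\ox_R$, left $R$-linearity of $f$ is precisely what makes $\Psi(f)$ a map of left $R$-modules, and naturality of the adjunction in $P$ forces $\Psi$ to commute with the Hom-differentials. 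Thus $\Psi$ is an isomorphism of complexes, and I expect this part to be routine.

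For the second statement I specialize to $N=M$ and $P=BR$, so that the algebra map $R\to\End_\k M$ makes $\End_\k M$ an algebra extension of $R$ and the cup product of Section~\ref{rmind} on $\Hom_{R^e}(P,\End_\k M)$ is defined, with $\mu_{\End_\k M}$ given by composition and with the coassociative bar diagonal $\omega\colon P\to P\ox_R P$ of~(\ref{comultbar}). Writing $\omega(p)=\omega_1(p)\ox_R\omega_2(p)$ in Sweedler-type notation, I would define the comparison map $\Phi\colon \Hom_{R^e}(P,\End_\k M)\to \End_R(P\ox_R M)$ by
\[
\Phi(f)(p\ox m)=(-1)^{|f|\,|\omega_1(p)|}\,\omega_1(p)\ox f(\omega_2(p))(m).
\]
Left $R$-linearity of $\omega$ makes each $\Phi(f)$ an $R$-module endomorphism of $P\ox_R M$, and the fact that $\omega$ is a chain map makes $\Phi$ commute with differentials (a short sign-bookkeeping computation, using that $\End_\k M$ carries the zero differential).

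The main work, and the reason the dg-algebra claim is restricted to the bar resolution, is showing that $\Phi$ is multiplicative, i.e. $\Phi(f)\circ\Phi(g)=\Phi(f\cup g)$. Expanding the left side and applying the diagonal twice gives $\omega_1\omega_1(p)\ox f(\omega_2\omega_1(p))\big(g(\omega_2(p))(m)\big)$, while the right side gives $\omega_1(p)\ox f(\omega_1\omega_2(p))\big(g(\omega_2\omega_2(p))(m)\big)$; these agree exactly because the deconcatenation diagonal~(\ref{comultbar}) is coassociative, so both collapse to the common iterated diagonal $\omega^{(2)}(p)=\omega_{(1)}(p)\ox\omega_{(2)}(p)\ox\omega_{(3)}(p)$. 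For a general $P$, where $\omega$ is only coassociative up to homotopy, this is precisely where multiplicativity would fail, consistent with $\Hom_{R^e}(P,B)$ being merely associative up to homotopy in Section~\ref{rmind}. Finally, to see $\Phi$ is a quasi-isomorphism I would factor it through the augmentation: let $\varepsilon\colon P\ox_R M\to M$ be induced by $\varphi\colon P\to R$, so that $\varepsilon_*\colon \End_R(P\ox_R M)\to \Hom_R(P\ox_R M,M)$ is a quasi-isomorphism, since $P\ox_R M$ is a projective resolution of $M$ (the Künneth fact recalled above the proposition) and hence $\Hom_R(P\ox_R M,-)$ preserves quasi-isomorphisms. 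Using the left counit identity $(\varphi\ox_R\mathrm{id})\omega=\mathrm{id}$ together with left $R$-linearity of $f$, one computes $\varepsilon_*\circ\Phi=\Psi$, the isomorphism of the first part. Since $\Psi$ is an isomorphism and $\varepsilon_*$ a quasi-isomorphism, the two-out-of-three property forces $\Phi$ to be a quasi-isomorphism, so $\Phi$ is the desired quasi-isomorphism of dg algebras. The only genuinely delicate point throughout is keeping the Koszul signs consistent across the chain-map, multiplicativity, and factorization identities.
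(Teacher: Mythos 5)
Your proposal is correct and follows essentially the same route as the paper: the first claim is the formal $\ox$--$\Hom$ adjunction, and the second is handled by the explicit map $f\mapsto\bigl(p\ox m\mapsto \pm\,\omega_1(p)\ox f(\omega_2(p))(m)\bigr)$, which is exactly the paper's ``apply $f$ to the tail of the bar monomial'' formula, with the quasi-isomorphism property deduced from the same commutative triangle through $\Hom_R(P\ox_R M,M)$. Your observation that multiplicativity rests on the strict coassociativity of the deconcatenation diagonal is a slightly more conceptual packaging of the sign-and-index verification the paper leaves to the reader, but it is the same argument.
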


Strictly speaking, only the first portion of this proposition is given
in Weibel's text.  The compatibility of the cup product with the
Yoneda product, in the case that $M=N$, should certainly also be well
known.  One simply verifies that the map sending $f\in
\Hom_B(P,\End_\k(M))$ to the function
\beq
\ba{l}
r\ox r_1\ox\dots\ox r_n\ox m\\
\hspace{5mm}\mapsto (-1)^{|f|(n-|f|)}r\ox r_1\ox\dots\ox r_{n-|f|}\ox f(1\ox r_{|n|-|f|+1}\ox\dots\ox r_n\ox 1)(m)
\ea
\eeq
in $\End_R(P\ox_RM)$ is a morphism dg algebras.  The fact that the
proposed map is a quasi-isomorphism follows by commutativity of the diagram
\beq
\xymatrixcolsep{4mm}
\xym{
 & \End_R(P\ox_R M)\ar[dr]^\sim & \\
\Hom_{R^e}(P, \End_\k(M))\ar[ur]\ar[rr]^\cong& & \Hom_{R}(P\ox_RM,M).
}
\eeq

\begin{corollary}
There is a canonical isomorphism of graded vector spaces
$\mathrm{HH}(R,\Hom_\k(M,N))\cong \Ext_R(M,N)$ and isomorphism of
graded algebras $\mathrm{HH}(R,\End_\k(M))\cong \Ext_R(M,M)$.
\end{corollary}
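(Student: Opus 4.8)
The plan is to deduce both statements directly from Proposition \ref{ack!} by passing to homology, using the fact (recorded just before that proposition) that $P\ox_R M$ is a projective resolution of $M$ whenever $P$ is a projective $R$-bimodule resolution of $R$.

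First, for the graded vector space isomorphism, I would fix a projective $R$-bimodule resolution $P\to R$. By the definition $\mathrm{HH}(R,W)=\Ext_{R^e}(R,W)$, the cohomology $\mathrm{HH}(R,\Hom_\k(M,N))$ is computed as the homology of $\Hom_{R^e}(P,\Hom_\k(M,N))$. On the other side, $P\ox_R M$ is a projective resolution of $M$, so $\Ext_R(M,N)$ is the homology of $\Hom_R(P\ox_R M,N)$. The first isomorphism of Proposition \ref{ack!} is an isomorphism of complexes $\Hom_{R^e}(P,\Hom_\k(M,N))\cong\Hom_R(P\ox_R M,N)$, and taking homology yields the claimed graded isomorphism. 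Naturality of the underlying $\ox$-$\Hom$ adjunction, together with the fact that any two projective (bimodule) resolutions are chain homotopy equivalent, shows the induced map on homology is independent of the choice of $P$; this is the sense in which it is \emph{canonical}.

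For the algebra isomorphism, I would specialize to $M=N$ and take $P=BR$, the bar resolution. Note that $\End_\k(M)$ is an algebra extension of $R$ via the representation $R\to\End_\k(M)$, so $\mathrm{HH}(R,\End_\k(M))$ carries the cup product of Section \ref{rmind} and is the homology algebra of the dg algebra $\Hom_{R^e}(BR,\End_\k(M))$. Likewise, by the definition of the Yoneda product, $\Ext_R(M,M)$ is the homology algebra of the endomorphism dg algebra $\End_R(BR\ox_R M)$, computed from the projective resolution $BR\ox_R M$ of $M$. The second part of Proposition \ref{ack!} supplies a quasi-isomorphism of dg algebras between these two complexes, and since a quasi-isomorphism of dg algebras induces an isomorphism of homology algebras, the desired algebra isomorphism $\mathrm{HH}(R,\End_\k(M))\cong\Ext_R(M,M)$ follows at once.

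The content of the corollary is therefore carried entirely by Proposition \ref{ack!}; there is essentially no new obstacle at this level. The one point deserving care is that the graded vector space isomorphism of the first part need not a priori respect any multiplicative structure, which is precisely why the algebra statement must be routed through the bar resolution and the dg algebra quasi-isomorphism of the second part rather than the bare adjunction. The remaining minor check is that the Yoneda product, as defined via an arbitrary projective resolution $Q\to M$, agrees with the composition product on $\End_R(BR\ox_R M)$ for the specific choice $Q=BR\ox_R M$; this is immediate from the standard independence of the Yoneda product on the chosen resolution.
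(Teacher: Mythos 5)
Your proposal is correct and follows exactly the route the paper intends: the corollary is stated as an immediate consequence of Proposition \ref{ack!}, with the graded vector space isomorphism obtained by taking homology of the adjunction isomorphism of complexes, and the algebra isomorphism obtained from the dg algebra quasi-isomorphism in the bar-resolution case. Your additional remarks on canonicity and on why the multiplicative statement must pass through the second part of the proposition are consistent with the surrounding discussion in the paper.
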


Note that, for any projective bimodule resolution $P\to R$ with
diagonal quasi-isomorphism $\omega:P\to P\ox_R P$, the dg algebra
structure on $\Hom_{R^e}(P,\mathrm{End}(M))$ induces a dg algebra
structure on the complex $\Hom_{R}(P\ox_R M, M)$ by way of the
adjunction isomorphism of Proposition \ref{ack!}.  For functions
$f,g\in \Hom_{R}(P\ox_R M, M)$, the product $fg\in \Hom_{R}(P\ox_R M,
M)$ will be given by
\beq
fg:x\ox_R m\mapsto (-1)^{|g||\omega_1(x)|}f(\omega_1(x)\ox_A g(\omega_2(x)\ox_A m)),
\eeq
where the notation $\omega(x)=\omega_1(x)\ox_A \omega_2(x)$ is as in
Section \ref{DIA}.

\begin{corollary}
Let $M$ be any $R$-module, $P$ be any projective $R$-bimodule
resolution over $R$, and $\omega:P\to P\ox_R P$ be any $R^e$-linear
quasi-isomorphism.  Give $\Hom_{R^e}(P\ox_R M,M)$ the dg algebra
structure outlined above.  Then
\beq
\Ext_{R}(M,M)\cong \mathrm{H}(\Hom_{R}(P\ox_RM,M))
\eeq
as an algebra.
\label{cor15}
\end{corollary}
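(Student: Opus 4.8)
The plan is to reduce the statement to the unnumbered corollary immediately preceding it, which identifies $\mathrm{HH}(R,\End_\k(M))$ with $\Ext_R(M,M)$ as graded algebras, by exploiting the fact that the dg algebra structure on $\Hom_R(P\ox_R M,M)$ is, by its very construction, transported from $\Hom_{R^e}(P,\End_\k(M))$ along the adjunction isomorphism of Proposition \ref{ack!}. First I would record that $B:=\End_\k(M)$ is an algebra extension of $R$, so that the material of Section \ref{rmind} equips $\Hom_{R^e}(P,B)$ with a (not-necessarily-associative) dg algebra structure built from the given diagonal map $\omega$. The product formula written just above the statement is precisely the image of this product under the adjunction isomorphism $\alpha:\Hom_{R^e}(P,\End_\k(M))\overset{\cong}\to \Hom_R(P\ox_R M,M)$; in other words, the target is given the unique algebra structure making $\alpha$ an isomorphism of dg algebras. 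Passing to homology then yields an isomorphism of graded algebras
\beq
\mathrm{H}\big(\Hom_{R^e}(P,\End_\k(M))\big)\overset{\cong}\to \mathrm{H}\big(\Hom_R(P\ox_R M,M)\big).
\eeq

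Next I would identify the left-hand side. By the discussion of Section \ref{rmind}, the homology of $\Hom_{R^e}(P,B)$ with the product built from $\omega$ is the Hochschild cohomology $\mathrm{HH}(R,B)$ equipped with its cup product, and that product on cohomology is independent of the chosen $\omega$. Hence for whatever $\omega$ we are handed we obtain $\mathrm{H}\big(\Hom_{R^e}(P,\End_\k(M))\big)\cong \mathrm{HH}(R,\End_\k(M))$ as algebras. Composing with the preceding corollary, $\mathrm{HH}(R,\End_\k(M))\cong \Ext_R(M,M)$, then gives the claimed algebra isomorphism $\Ext_R(M,M)\cong \mathrm{H}(\Hom_R(P\ox_R M,M))$.

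The step I expect to be the main obstacle is checking that the cup-product algebra on $\mathrm{HH}(R,\End_\k(M))$ produced from our arbitrary resolution $P$ really agrees with the canonical one used in the preceding corollary, which was extracted from the bar resolution $BR$. The independence from $\omega$ for a fixed $P$ is already granted by Section \ref{rmind}, so what remains is resolution-independence: any two projective $R^e$-resolutions of $R$ are linked by comparison quasi-isomorphisms, unique up to homotopy, and I must verify that the induced isomorphism on $\mathrm{H}(\Hom_{R^e}(-,B))$ carries one cup product to the other. I would handle this by choosing the diagonal approximations on the two resolutions compatibly with a fixed comparison map, so that the comparison intertwines the two multiplications up to homotopy; this is the standard argument that the cup product makes $\mathrm{HH}(R,B)$ a well-defined algebra, and it is the only point in the proof that requires more than a direct appeal to the constructions already in place.
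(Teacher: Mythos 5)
Your proposal is correct and follows essentially the same route as the paper: both arguments observe that the adjunction isomorphism is a dg algebra isomorphism by construction, reduce to the bar-resolution case handled by Proposition \ref{ack!}, and then invoke the independence of the cup product on $\mathrm{HH}(R,\End_\k(M))$ from the choice of projective bimodule resolution and diagonal map $\omega$. The resolution-independence step you flag as the main obstacle is exactly the point the paper also leans on, citing it as a known fact rather than spelling out the comparison-map argument you sketch.
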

\begin{proof}
The algebra structure on $\Hom_{R^e}(P,\mathrm{End}(M))$ is defined so
that the isomorphism $\Hom_{R^e}(P,\mathrm{End}(M))\overset{\cong}\to
\Hom_R(P\ox_R M,M)$ is one of dg algebras.  In the case that $P$ is
the bar resolution, there is an isomorphism of algebras
\beq
\mathrm{H}(\Hom_R(P\ox_R M,M))\cong \mathrm{H}(\Hom_{R^e}(P,\mathrm{End}(M)))\cong \Ext_{R}(M,M)
\eeq
by Proposition \ref{ack!}.  The result now follows from the fact that
the cup product on
$\mathrm{H}(\Hom_{R^e}(P,\mathrm{End}(M)))=\mathrm{HH}(R,\End_\k(M))$
can be computed using any resolution and any quasi-isomorphism $\omega:P\to P\ox_A P$.
\end{proof}

Let us return to our analysis of the cohomology of smash products.  We
fix a Hopf algebra $\Gamma$ and $\Gamma$-module algebra $A$.  Before
giving the main results let us clarify a possible point of confusion.
\par
For any $A\#\Gamma$-modules $M$ and $N$ there is a standard way to
endow $\RHom_A(M,N)$ with a right $\Gamma$-module structure.  One
simply takes a $A\#\Gamma$-complex $Q$ which is a projective
resolution of $M$ over $A$ and defines the action on
$\RHom_A(M,N)=\Hom_A(Q,N)$ by
\beq
f\cdot \gamma(q):= S(\gamma_1)f(\gamma_2q).
\eeq
We would like to know that this $\Gamma$-module structures agree with
the $\Gamma$-module structure on $\RHom_{A^e}(A,\End_\k(M,N))$ given
in Section \ref{HCvDI}.

\begin{lemma}
Let $M$ and $N$ be modules over $A\#\Gamma$ and $K$ be a projective
bimodule resolution of $A$ satisfying conditions (I) and (II).  Then
the complex $K\ox_A M$ is a $A\#\Gamma$-complex under the diagonal
$\Gamma$-action and the quasi-isomorphism $K\ox_A M\to M$ is
$\Gamma$-linear.  Furthermore, the isomorphism
\beq
\Hom_{A^e}(K, \Hom_\k(M,N))\overset{\cong}\to \Hom_{A}(K\ox_AM,N).
\eeq
is one of complexes of right $\Gamma$-modules.
\label{somelems}
\end{lemma}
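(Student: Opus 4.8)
The plan is to dispatch the three assertions in turn, treating the final $\Gamma$-equivariance of the adjunction as the real content. First I would verify that the diagonal formula $\gamma\cdot(x\ox_A m):=(\gamma_1 x)\ox_A(\gamma_2 m)$ descends to the quotient $K\ox_A M$. The only subtlety is well-definedness over the tensor relation: expanding $\gamma\cdot(xa\ox_A m)$ and using $\Gamma$-equivariance of the right $A$-action on $K$ (condition (I)) to write $\gamma_1(xa)=(\gamma_1 x)({^{\gamma_2}a})$, one slides ${^{\gamma_2}a}$ across $\ox_A$ and then, by the module relation $\delta\cdot(am)=({^{\delta_1}a})(\delta_2 m)$ in $M$ together with coassociativity, recognizes the result as $(\gamma_1 x)\ox_A\big(\gamma_2\cdot(am)\big)=\gamma\cdot(x\ox_A am)$.

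Next I would check that the left $A$-action $a\cdot(x\ox_A m):=(ax)\ox_A m$ and this diagonal $\Gamma$-action assemble into an $A\#\Gamma$-module structure, i.e. that they satisfy the smash relation $\gamma\cdot(a\cdot q)=({^{\gamma_1}a})(\gamma_2\cdot q)$ for $q=x\ox_A m$; this follows from $\Gamma$-equivariance of the \emph{left} $A$-action on $K$ and coassociativity, and is exactly the condition guaranteeing that $a\ox\gamma\mapsto a\cdot(\gamma\cdot-)$ extends to an action of the smash product. For the map $\tau\ox_A\mathrm{id}\colon K\ox_A M\to M$, left $A$-linearity is clear and $\Gamma$-linearity is immediate from $\Gamma$-equivariance of $\tau$ and the module relation in $M$, so it is $A\#\Gamma$-linear; that it is a quasi-isomorphism follows because each $K^i$ is free over $A^e$ by (II), hence flat as a right $A$-module, so $-\ox_A M$ carries the quasi-isomorphism $K\to A$ to $K\ox_A M\to M$.

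For the second assertion, the underlying isomorphism $\Hom_{A^e}(K,\Hom_\k(M,N))\to\Hom_A(K\ox_A M,N)$, $f\mapsto\big(x\ox_A m\mapsto f(x)(m)\big)$, is the tensor--hom adjunction already recorded in Proposition \ref{ack!}, so it remains only to match the two right $\Gamma$-actions. On the source the action is that of Definition \ref{notez}, $(f\cdot\gamma)(x)=S(\gamma_1)f(\gamma_2 x)\gamma_3$, computed in the $A\#\Gamma$-bimodule $\Hom_\k(M,N)$; on the target it is the standard action $(g\cdot\gamma)(q)=S(\gamma_1)g(\gamma_2 q)$ recalled just before the lemma, with $q\in K\ox_A M$ acted on by the diagonal action constructed above. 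I would evaluate both $\widetilde{f\cdot\gamma}$ and $\tilde f\cdot\gamma$ at a typical element $x\ox_A m$. Unwinding the bimodule structure on $\Hom_\k(M,N)$, the source side becomes $S(\gamma_1)\cdot\big(f(\gamma_2 x)(\gamma_3 m)\big)$; the target side, after expanding $\gamma_2\cdot(x\ox_A m)=(\gamma_{21}x)\ox_A(\gamma_{22}m)$, becomes $S(\gamma_1)\cdot\big(f(\gamma_{21}x)(\gamma_{22}m)\big)$, and coassociativity $\gamma_1\ox\gamma_{21}\ox\gamma_{22}=\gamma_1\ox\gamma_2\ox\gamma_3$ identifies the two.

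The main obstacle, and indeed the entire point of the lemma, is this last comparison: a priori the action of Definition \ref{notez} (built from the twisted bimodule structure and the antipode) and the naive action on $\Hom_A(K\ox_A M,N)$ (built from the projective resolution $K\ox_A M$ of $M$ over $A$) are defined by quite different recipes, and what must be shown is that the adjunction intertwines them. Everything then reduces to careful Sweedler bookkeeping and repeated use of coassociativity, the equivariance in (I), and the smash-product module relation; no homotopies or auxiliary resolutions beyond $K$ itself are required.
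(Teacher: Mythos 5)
Your proposal is correct and follows essentially the same route as the paper's proof in the appendix: the heart of the matter is the single Sweedler computation showing the adjunction sends $f\cdot\gamma$ to $\widetilde f\cdot\gamma$, namely $S(\gamma_1)f(\gamma_2 x)(\gamma_3 m)=S(\gamma_1)f\bigl(\gamma_{21}x\ox_A\gamma_{22}m\bigr)$ via coassociativity and the bimodule structure on $\Hom_\k(M,N)$. The paper dispatches the first claim in one line (``$K$ is a left $A\#\Gamma$-complex itself''), whereas you spell out the well-definedness over $\ox_A$, the smash relation, and the flatness argument for the quasi-isomorphism; this is just a more explicit rendering of the same facts.
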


Since this lemma is not essential to the remainder of the paper, the
proof is deferred  to the appendix.  We now give the main results of
the section.

\begin{theorem}
Let $M$ and $N$ be $A\#\Gamma$-modules, $L$ be a projective resolution
of the trivial right $\Gamma$-module $\k$, and $K$ be a projective
$A$-bimodule resolution of $A$ satisfying (I) and (II).  Then there is
an isomorphism of chain complexes
\beq
\Hom_{A\#\Gamma}(K\# L^\uparrow\ox_{A\#\Gamma}M,N)\overset{\cong}\to \Hom_\Gamma(L,\Hom_A(K\ox_A M,N)).
\eeq
When the hypotheses of Theorem \ref{bigthm2} are satisfied, the isomorphism
\beq
\Hom_{A\#\Gamma}(K\# L^\uparrow\ox_{A\#\Gamma}M,M)\overset{\cong}\to \Hom_\Gamma(L,\Hom_A(K\ox_A M,M))
\eeq
is one of (non-necessarily-associative) dg algebras.
\label{bigthm3}
\end{theorem}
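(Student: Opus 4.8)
The plan is to reduce this to the Hochschild cohomology results of Sections \ref{HCvDI} and \ref{DIA} by choosing the coefficient bimodule to be $\Hom_\k(M,N)$ (respectively the algebra extension $\End_\k(M)$ when $M=N$), and then threading the adjunction isomorphisms of Proposition \ref{ack!} through the isomorphism $\Xi$ of Theorem \ref{bigthm1}. Since $\Hom_\k(M,N)$ is a complex of $A\#\Gamma$-bimodules, Theorem \ref{bigthm1} first furnishes a natural isomorphism of chain complexes
\beq
\Xi:\Hom_{(A\#\Gamma)^e}(K\# L^{\uparrow}, \Hom_\k(M,N))\overset{\cong}\to \Hom_\Gamma(L,\Hom_{A^e}(K,\Hom_\k(M,N))).
\eeq
On the domain I would apply the $\ox$-$\Hom$ adjunction of Proposition \ref{ack!} with $R=A\#\Gamma$ and $P=K\# L^{\uparrow}$ to identify it with $\Hom_{A\#\Gamma}(K\# L^{\uparrow}\ox_{A\#\Gamma}M,N)$. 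On the codomain I would apply Lemma \ref{somelems}, which gives an isomorphism $\Hom_{A^e}(K,\Hom_\k(M,N))\cong \Hom_A(K\ox_A M,N)$ of complexes of right $\Gamma$-modules, and then apply $\Hom_\Gamma(L,-)$. Composing the three isomorphisms yields the asserted isomorphism of chain complexes, with naturality inherited from each factor. This settles the first claim, which uses no diagonal maps.

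For the algebraic claim I would take $M=N$ and work with the algebra extension $\End_\k(M)$, upgrading each of the three isomorphisms to one of (non-necessarily-associative) dg algebras. Theorem \ref{bigthm2} already gives that $\Xi$ is a dg algebra isomorphism for $B=\End_\k(M)$, where the cup product on the domain is computed through the diagonal $\phi(\omega\ox\sigma^{\uparrow})$ of Proposition \ref{digggg}. The left-hand adjunction is a dg algebra isomorphism by the second half of Proposition \ref{ack!} together with the description of the induced product preceding Corollary \ref{cor15}, applied with $R=A\#\Gamma$, $P=K\# L^{\uparrow}$ and this same diagonal; thus the Yoneda-type product on $\Hom_{A\#\Gamma}(K\# L^{\uparrow}\ox_{A\#\Gamma}M,M)$ agrees with the cup product.

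It then remains to upgrade the right-hand step, and here the crucial point is that the adjunction of Lemma \ref{somelems} is \emph{simultaneously} a map of right $\Gamma$-modules and a map of dg algebras. Its $\Gamma$-equivariance is exactly Lemma \ref{somelems}, while its compatibility with products is the same adjunction compatibility (Proposition \ref{ack!} and the discussion before Corollary \ref{cor15}), now applied with $R=A$, $P=K$ and the diagonal $\omega$. Hence $\Hom_{A^e}(K,\End_\k(M))\cong \Hom_A(K\ox_A M,M)$ is an isomorphism of right $\Gamma$-module dg algebras, where the $\Gamma$-action on the target is the standard one $f\cdot\gamma(q)=S(\gamma_1)f(\gamma_2 q)$. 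Because the product on $\Hom_\Gamma(L,-)$ of a $\Gamma$-module dg algebra is built functorially from the fixed diagonal $\sigma$ (Section \ref{rmind}), applying $\Hom_\Gamma(L,-)$ produces a dg algebra isomorphism $\Hom_\Gamma(L,\Hom_{A^e}(K,\End_\k(M)))\cong \Hom_\Gamma(L,\Hom_A(K\ox_A M,M))$. Composing the three dg algebra isomorphisms gives the result.

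I expect the main obstacle to be purely bookkeeping: confirming that a single package of diagonal data ($\omega$ on $K$, $\sigma$ on $L$, and hence $\sigma^{\uparrow}$ and $\phi(\omega\ox\sigma^{\uparrow})$ on $K\# L^{\uparrow}$) simultaneously governs all three product structures, so that each isomorphism in the chain is product-compatible for the \emph{same} diagonal. In particular one must check that the product transported through Lemma \ref{somelems} is precisely the Yoneda-type product of Corollary \ref{cor15}, and that the $\Gamma$-module dg algebra structures entering the two applications of $\Hom_\Gamma(L,-)$ correspond under the adjunction; both hold because one and the same map plays the roles of the $\Gamma$-equivariant and the product-compatible isomorphism.
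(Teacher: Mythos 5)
Your proposal is correct and follows essentially the same route as the paper: compose the adjunction of Proposition \ref{ack!} (with coefficients $\Hom_\k(M,N)$), the isomorphism $\Xi$ of Theorem \ref{bigthm1}/\ref{bigthm2}, and the $\Gamma$-equivariant adjunction of Lemma \ref{somelems} pushed through $\Hom_\Gamma(L,-)$, observing that the outer two maps are dg algebra isomorphisms by the very definition of the products in Corollary \ref{cor15}. Your extra bookkeeping about a single package of diagonal data governing all three products is exactly the implicit content of the paper's phrase ``by the definition of the multiplicative structures considered in Corollary \ref{cor15}.''
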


In the above statement, $\Hom_{A\#\Gamma}(K\#
L^\uparrow\ox_{A\#\Gamma}M,M)$ and $\Hom_A(K\ox_A M,M)$ are supposed
to have the algebra structures of Corollary \ref{cor15}.

\begin{proof}
From Proposition \ref{ack!}, Theorem \ref{bigthm1}, and the previous
lemma, we get a sequence of isomorphisms of chain complexes
\beq
\ba{rl}
\Hom_{A\#\Gamma}(K\# L^\uparrow\ox_{A\#\Gamma}M,N)&\cong \Hom_{(A\#\Gamma)^e}(K\# L^\uparrow,\Hom_\k(M,N))\\
&\cong \Hom_{\Gamma}(L,\Hom_{A^e}(K,\Hom_\k(M,N))\\
&\cong \Hom_\Gamma(L,\Hom_A(K\ox_A M, N)).
\ea
\eeq
Suppose now that $N=M$ and that the hypotheses of Theorem
\ref{bigthm2} are met.  Then the second isomorphism
\beq
\Hom_{(A\#\Gamma)^e}(K\# L^\uparrow,\Hom_\k(M,N))\cong \Hom_{\Gamma}(L,\Hom_{A^e}(K,\Hom_\k(M,N))
\eeq
is one of dg algebras by Theorem \ref{bigthm2}.  The isomorphisms
\beq
\Hom_{A\#\Gamma}(K\# L^\uparrow\ox_{A\#\Gamma}M,N)\cong \Hom_{(A\#\Gamma)^e}(K\# L^\uparrow,\Hom_\k(M,N))
\eeq
and
\beq
\Hom_{A^e}(K,\Hom_\k(M,N))\cong \Hom_A(K\ox_A M, N)
\eeq
are isomorphisms of dg algebras by the definition of the
multiplicative structures considered in Corollary \ref{cor15}.  It
follows that the final isomorphism
\beq
\Hom_{\Gamma}(L,\Hom_{A^e}(K,\Hom_\k(M,N))\cong \Hom_\Gamma(L,\Hom_A(K\ox_A M, N))
\eeq
is one of dg algebras as well.  Taking this all together gives the
proposed result.
\end{proof}

In the complex $\Hom_\Gamma(L,\Hom_A(K\ox_A M,N))$, we may replace
$K\ox_A M$ with any $\Gamma$-linear $A$-projective resolution $Q\to
M$.  The proof of this fact is the similar to the one given for
Corollary \ref{bigcor1}.  In the entire statement of Theorem
\ref{bigthm3} we could have also replaces $K$ with any
$A^e$-projective $\Gamma$-equivariant resolution $P\to A$ with an
equivariant diagonal, by Corollary \ref{bigcor2}.

\begin{corollary}
Let $M$ and $N$ be modules over $A\#\Gamma$, and $K$ and $L$ be as in
Theorem \ref{bigthm2}.  Then there is an isomorphism of graded vector spaces
\beq
\Ext_{A\#\Gamma}(M,N)\cong\mathrm{H}\big(\Hom_\Gamma(L,\Hom_A(K\ox_AM,N))\big)
\eeq
and an isomorphism of graded algebras
\beq
\Ext_{A\#\Gamma}(M,M)\cong\mathrm{H}\big(\Hom_\Gamma(L,\Hom_A(K\ox_AM,M))\big).
\eeq
\label{thmthmthms}
\end{corollary}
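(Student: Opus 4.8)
The plan is to derive the statement from Theorem \ref{bigthm3} together with Corollary \ref{cor15}, once I know that $K\# L^{\uparrow}\ox_{A\#\Gamma}M$ is a projective resolution of $M$ over $A\#\Gamma$. First I would record this resolution property. By Theorem \ref{lol} the complex $K\# L^{\uparrow}$ is a projective $A\#\Gamma$-bimodule resolution of $A\#\Gamma$. Tensoring a free bimodule $(A\#\Gamma)\ox V\ox (A\#\Gamma)$ over $A\#\Gamma$ with $M$ yields the free left module $(A\#\Gamma)\ox (V\ox M)$, so each term of $(K\# L^{\uparrow})\ox_{A\#\Gamma}M$ is projective as a left $A\#\Gamma$-module; moreover each term of $K\# L^{\uparrow}$ is flat on the right, so the quasi-isomorphism $K\# L^{\uparrow}\to A\#\Gamma$ remains a quasi-isomorphism after applying $-\ox_{A\#\Gamma}M$ (the K\"unneth argument recalled before Proposition \ref{ack!}). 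Hence $K\# L^{\uparrow}\ox_{A\#\Gamma}M\to M$ is a projective resolution, and consequently
\beq
\Ext_{A\#\Gamma}(M,N)\cong \mathrm{H}\big(\Hom_{A\#\Gamma}(K\# L^{\uparrow}\ox_{A\#\Gamma}M,N)\big).
\eeq
Combining this with the chain-complex isomorphism of Theorem \ref{bigthm3} and passing to homology gives the first (graded vector space) isomorphism.

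For the algebra statement I would invoke Corollary \ref{cor15} with $R=A\#\Gamma$, with $P=K\# L^{\uparrow}$, and with the $R^e$-linear diagonal quasi-isomorphism $\phi(\omega\ox\sigma^{\uparrow}):K\# L^{\uparrow}\to K\# L^{\uparrow}\ox_{A\#\Gamma}K\# L^{\uparrow}$ produced in Proposition \ref{digggg}. Corollary \ref{cor15} then identifies $\Ext_{A\#\Gamma}(M,M)$, with its Yoneda product, with the homology of $\Hom_{A\#\Gamma}(K\# L^{\uparrow}\ox_{A\#\Gamma}M,M)$ equipped with the dg algebra structure induced by this diagonal. By the remark following Theorem \ref{bigthm3}, this is exactly the algebra structure under which the $N=M$ case of Theorem \ref{bigthm3} is an isomorphism of dg algebras. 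Taking homology of that isomorphism therefore yields the claimed isomorphism of graded algebras with $\mathrm{H}\big(\Hom_\Gamma(L,\Hom_A(K\ox_A M,M))\big)$.

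The only point requiring real care, and the step I expect to be the main (if modest) obstacle, is the bookkeeping of dg algebra structures: one must confirm that the structure on $\Hom_{A\#\Gamma}(K\# L^{\uparrow}\ox_{A\#\Gamma}M,M)$ arising from Corollary \ref{cor15} (built from the diagonal of Proposition \ref{digggg}) coincides with the structure transported through Theorem \ref{bigthm3}. Since Theorem \ref{bigthm3} explicitly fixes its algebra structures to be those of Corollary \ref{cor15}, and the same diagonal $\phi(\omega\ox\sigma^{\uparrow})$ is in play throughout, no genuinely new verification is needed; everything else is a formal application of the already-established results, so the corollary follows immediately.
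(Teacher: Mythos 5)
Your proposal is correct and follows essentially the same route as the paper: the paper's proof likewise combines the dg algebra isomorphism of Theorem \ref{bigthm3} with the identification $\Ext_{A\#\Gamma}(M,M)=\mathrm{H}\big(\Hom_{A\#\Gamma}(K\# L^\uparrow\ox_{A\#\Gamma}M,M)\big)$ as an algebra from Corollary \ref{cor15}. The extra detail you supply about $K\# L^\uparrow\ox_{A\#\Gamma}M$ being a projective resolution of $M$ is exactly the point the paper records just before Proposition \ref{ack!}, so nothing is missing.
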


\begin{proof}
This follows by the isomorphism of dg algebras
\beq
\Hom_{A\#\Gamma}(K\# L^\uparrow\ox_{A\#\Gamma}M,M)\overset{\cong}\to \Hom_\Gamma(L,\Hom_A(K\ox_A M,M))
\eeq
of Theorem \ref{bigthm3} and the fact that
\beq
\Ext_{A\#\Gamma}(M,M)=\mathrm{H}\big(\Hom_{A\#\Gamma}(K\# L^\uparrow\ox_{A\#\Gamma}M,M)\big)
\eeq
as an algebra, by Corollary \ref{cor15}.
\end{proof}

Of course, from the Theorem we can also derive some multaplicative
spectral sequences converging to the $\mathrm{Ext}$ algebra
$\mathrm{Ext}_{A\#\Gamma}(M,M)$.  We define the filtrations $F^\Gamma$
and $F^A$ on each $\mathrm{Ext}_A(M,M)$ in the same manner as was done
at Notation \ref{filtnotes}.

\begin{corollary}
For any $A\#\Gamma$-module $M$, there are two multiplicative spectral sequences
\beq
E_2=\Ext_{\Gamma\text{-}\mathrm{mod}}(\k,\Ext_{A\text{-}\mathrm{mod}}(M,M))\Rightarrow \Ext_{A\#\Gamma\text{-}\mathrm{mod}}(M,M)
\eeq
and
\beq
'E_1=\Ext_{\Gamma\text{-}\mathrm{mod}}(\k,\RHom_{A\text{-}\mathrm{mod}}(M,M))\Rightarrow \Ext_{A\#\Gamma\text{-}\mathrm{mod}}(M,M)
\eeq
which converge to $\Ext_{A\#\Gamma\text{-}\mathrm{mod}}(M,M)$ as an
algebra.  In the case that the global dimension of $\Gamma$ is $\leq
1$ we have
\beq
\gr_\Gamma \Ext_{A\#\Gamma}(M,M)=\Ext_\Gamma(\k,\Ext_A(M,M))
\eeq
as an algebra.
\label{lastocor}
\end{corollary}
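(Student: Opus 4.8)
The plan is to follow the proof of Corollary \ref{spectrill} closely, replacing the double complex $\Hom_\Gamma(L,\Hom_{A^e}(K,B))$ there with the complex $\Hom_\Gamma(L,\Hom_A(K\ox_A M,M))$. By Theorem \ref{bigthm3} this complex carries a (not-necessarily-associative) dg algebra structure, and by Corollary \ref{thmthmthms} its homology computes $\Ext_{A\#\Gamma}(M,M)$ as an algebra. The product on it is bigraded, in the sense that it respects the gradings coming from the degree on $L$ and the degree on $K\ox_A M$ separately; hence both the row filtration and the column filtration give it the structure of a filtered dg algebra. It follows immediately, exactly as in Corollary \ref{spectrill}, that the two associated spectral sequences are multiplicative and converge to $\Ext_{A\#\Gamma}(M,M)$ as an algebra.

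First I would identify the two initial pages. Filtering by the degree on $K\ox_A M$ produces a spectral sequence with $'E_1=\Ext_\Gamma(\k,\RHom_A(M,M))$, where we take $\RHom_A(M,M)=\Hom_A(K\ox_A M,M)$; here $K\ox_A M$ is a projective $A$-resolution of $M$ by Lemma \ref{somelems}. Filtering instead by the degree on $L$ produces a spectral sequence with $E_1=\Hom_\Gamma(L,\Ext_A(M,M))$, since each $\Hom_\Gamma(L^i,-)$ is exact and hence commutes with homology. Because the differentials on this $E_1$-page are induced by $d_L^\ast$, the $E_2$-page is $\Ext_\Gamma(\k,\Ext_A(M,M))$, as claimed.

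Finally, for the statement under the hypothesis that the global dimension of $\Gamma$ is at most $1$, I would argue as in Corollary \ref{spectrilll} that $E_2=E_\infty$. The $E_2$-page is $\Ext_\Gamma^p(\k,\Ext_A^q(M,M))$, which vanishes for $p\geq 2$ when the global dimension of $\Gamma$ is $\leq 1$; thus the page is concentrated in the two columns $p=0,1$. Since each higher differential $d_r$ (for $r\geq 2$) raises the first degree by $r\geq 2$, all such differentials vanish, whence $E_2=E_\infty=\gr_\Gamma\Ext_{A\#\Gamma}(M,M)$ as bigraded algebras.

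I expect the only point requiring genuine care to be the multiplicativity of the identifications of the $E_1$- and $E_2$-pages, i.e. checking that the products induced on $\Hom_\Gamma(L,\Ext_A(M,M))$ and subsequently on $\Ext_\Gamma(\k,\Ext_A(M,M))$ really are the expected Ext-algebra products rather than merely bigraded products of the correct total degree. This is precisely the content of the fact that the product on the double complex is bigraded and filtration-preserving, so that the induced products on successive pages are the standard ones; the substantive work underlying it has already been carried out in Theorem \ref{bigthm3}, together with the dg $\Gamma$-module algebra structure on $\Hom_A(K\ox_A M,M)$ supplied via Lemma \ref{somelems}. Everything else is formal and entirely parallel to the Hochschild case.
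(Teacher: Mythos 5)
Your proposal is correct and follows essentially the same route as the paper, which likewise derives both spectral sequences from the row and column filtrations on the double complex $\Hom_\Gamma(L,\Hom_A(K\ox_A M,M))$ and refers back to the arguments of Corollaries \ref{spectrill} and \ref{spectrilll}; you have simply written out the details that the paper leaves implicit.
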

\begin{proof}
The two spectral sequences arise from considering the row and column
filtrations on the double complex $\Hom_\Gamma(L,\Hom_A(K\ox_AM,M))$.
The details are the same as those given in the proofs of Corollaries
\ref{spectrill} and \ref{spectrilll}.
\end{proof}

This corollary can be seen as a generalization of the
Lyndon-Hochschild-Serre spectral sequence in the following sense: if
$N$ and $G$ are groups, and $G$ acts on $N$ by automorphisms, we get
an action of $\Gamma=\k G$ on $A=\k N$.  We then have $A\# \Gamma=\k
(N\rtimes G)$, where $N\rtimes G$ denotes the semi-direct product.
When $M=\k$ is the trivial $N\rtimes G$ module, the multiplicative
spectral sequence
\beq
\Ext_\Gamma(\k,\Ext_A(M,M))=\mathrm{H}(G,\mathrm{H}(N,\k))\Rightarrow \Ext_{A\#\Gamma}(\k,\k)=\mathrm{H}(N\rtimes G,\k)
\eeq
of Corollary \ref{lastocor} is simply the Lyndon-Hochschild-Serre
spectral sequence.

\section*{Appendix}

\begin{proof}[Proof of Corollary \ref{bigcor2}]
Let $\mathscr{P}\to P$ be a projective resolution of $P$ as a complex
in $\mathrm{EQ}_\Gamma A^e$-mod.  Since the restriction functor
$\mathrm{EQ}_\Gamma A^e$-mod$\to A^e$-mod preserves projectives,
$\mathscr{P}$ will be a projective bimodule resolution of $A$ as well.
The tensor product of the quasi-isomorphism $\mathscr{P}\to P$ then
produces a quasi-isomorphism $\mathscr{P}\ox_A \mathscr{P}\to P\ox_A
P$ and we get a diagram
\beq
\xym{
\mathscr{P}\ox_A \mathscr{P}\ar[r] & P\ox_A P\\
\mathscr{P}\ar[u]\ar[r]\ar@{}[ur]|{\txt{$h\circlearrowleft$}} & P,\ar[u]
}
\eeq
which commutes up to a $\Gamma$-equivariant homotopy $h:\mathscr{P}\to
P\ox_A P[1]$.  The existence of this homotopy follow by projectivity
of $\mathscr{P}$ and the fact that the diagram commutes on homology.
We will then have a diagram
\beq
\xym{
\Hom_{A^e}(P,B)\ox \Hom_{A^e}(P,B)\ar[d]\ar[r]\ar[d]\ar@/_4.5pc/@{..>}[dd]_{mult} &\Hom_{A^e}(\mathscr{P},B)\ox\Hom_{A^e}(\mathscr{P},B)\ar[d]\ar@/^4.5pc/@{..>}[dd]^{mult}\\
\Hom_{A^e}(P\ox_A P, B)\ar[r]\ar[d] & \Hom_{A^e}(\mathscr{P}\ox_A \mathscr{P}, B)\ar[d]\\
\Hom_{A^e}(P,B)\ar[r]\ar@{}[ur]|{\txt{$h^\ast\circlearrowleft$}} & \Hom_{A^e}(\mathscr{P},B),
}\eeq
where the top square commutes and the bottom square commutes up to the
$\Gamma$-linear homotopy 
\beq
h^\ast:\Hom_{A^e}(P\ox_A P,B)\to\Hom_{A^e}(\mathscr{P},B)[1].
\eeq
To be clear, the top most vertical maps take a product of functions
$f\ox g$ to the function sending a monomial $x\ox_A y$ in $P\ox_A P$,
or $\mathscr{P}\ox_A \mathscr{P}$, to $(-1)^{|x||g|}f(x)g(y)$.  It
follows that the map $\Hom_{A^e}(P,B)\to \Hom_{A^e}(\mathscr{P},B)$ is
an algebra map, up to a homotopy, as is the induced map
\beq
\Hom_\Gamma(L,\Hom_{A^e}(P,B))\to \Hom_\Gamma(L,\Hom_{A^e}(\mathscr{P},B)).
\eeq
Since $P$ and $\mathscr{P}$ are projective $A$-bimodule resolutions of
$A$, both of these maps are also seen to be quasi-isomorphisms.
\par
Repeat the process with $K$ to get a resolution $\mathscr{K}\to K$ and
quasi-isomorphism
\beq
\Hom_\Gamma(L,\Hom_{A^e}(K,B))\to \Hom_\Gamma(L,\Hom_{A^e}(\mathscr{K},B))
\eeq
which is an algebra map up to a homotopy.  Finally, since both
$\mathscr{K}$ and $\mathscr{P}$ are projective resolutions of $A$ in
$\mathrm{EQ}_\Gamma A^e$, there is a $\Gamma$-equivariant
quasi-isomorphism $\mathscr{K}\to\mathscr{P}$.  We repeat the above
argument a third time to deduce a quasi-isomorphism
\beq
\Hom_\Gamma(L,\Hom_{A^e}(\mathscr{P},B))\to \Hom_\Gamma(L,\Hom_{A^e}(\mathscr{K},B))
\eeq
which is an algebra map up to a homotopy.  Taking $\mathscr{A}=
\Hom_\Gamma(L,\Hom_{A^e}(\mathscr{K},B))$ then provides the desired result.
\end{proof}

\begin{proof}[Proof of Lemma \ref{somelems}]
The first claim follows from the fact that $K$ is a left
$A\#\Gamma$-complex itself.  Now, for any $f$ in $\Hom_A(K\ox_AM,N)$
let $f_{\Hom}$ denote its image in $\Hom_{A^e}(K,\Hom_\k(M,N))$ under
the adjunction isomorphism of Proposition \ref{ack!}.  Then we have,
for any $\gamma\in \Gamma$, $x\in K$, and $m\in M$,
\beq
\ba{rl}
\big((f\gamma)_{\Hom}(x)\big)(m)&=f\gamma(x\ox_Am)\\
&=S(\gamma_1)f(\gamma_2x\ox_A\gamma_3m)\\
&=S(\gamma_1)f_{\Hom}(\gamma_2x)(\gamma_3m)\\
&=\big(S(\gamma_1)f_{\Hom}(\gamma_2x)\gamma_3)(m)\\
&=(f_{\Hom}\gamma(x))(m).
\ea
\eeq
So $f\gamma$ maps to $f_{\Hom}\gamma$ under the adjunction isomorphism
of the proof of Theorem \ref{ack!} and, consequently, the isomorphism
\beq
\RHom_{A^e}(A, \Hom_\k(M,N))\overset{\cong}\to \RHom_{A}(M,N)
\eeq
is $\Gamma$-linear.
\end{proof}

\def\cprime{$'$}

\end{document}